\title[Accelerated spatial approximations]{Accelerated spatial approximations for time discretized stochastic partial differential equations}
\author[E.J. Hall]{Eric Joseph Hall}
\address{School of Mathematics\\University of Edinburgh\\King's Buildings\\Edinburgh, EH9 3JZ\\UK}
\email{e.hall@ed.ac.uk}
\subjclass[2000]{65M06, 60H15, 65B05}
\keywords{Richardson's method, finite differences, linear stochastic partial differential equations of parabolic type, Cauchy problem}
\numberwithin{equation}{section}
\newtheorem{thm}{Theorem}[section]          
\newtheorem{cor}[thm]{Corollary}             
\newtheorem{lem}[thm]{Lemma}                 
\newtheorem{asm}[thm]{Assumption}            
\theoremstyle{definition}
\theoremstyle{definition}
\newtheorem{rmk}[thm]{Remark}                
\begin{document}
\begin{abstract}
The present article investigates the convergence of a class of space-time discretization schemes for the Cauchy problem for linear parabolic stochastic partial differential equations (SPDEs) defined on the whole space. Sufficient conditions are given for accelerating the convergence of the scheme with respect to the spatial approximation to higher order accuracy by an application of Richardson's method. This work extends the results of Gy{\"o}ngy and Krylov [\emph{SIAM J.\ Math.\ Anal.}, 42 (2010), pp. 2275--2296] to schemes that discretize in time as well as space.
\end{abstract}

\maketitle

\section{Introduction}\label{sec: Introduction}
For a fixed $\tau \in (0,1)$, we consider the equation 
\begin{equation}
\label{eqn: implicit space-time scheme}
v^{h}_{i} = v^{h}_{i-1} + \left( L^{h}_{i} v^{h}_{i} + f_{i} \right) \tau  + \sum_{\rho=1}^{d_{1}} \left( M^{h,\rho}_{i-1} v^{h}_{i-1} + g^{\rho}_{i-1} \right) \xi^{\rho}_{i}
\end{equation}
for $i \in \{1, \dots, n\}$ and $(\omega, x) \in \Omega \times G_{h}$ with a given initial condition, where $G_{h}$ is the space grid 
$$G_{h} := \{ \lambda_{1} h + \dots + \lambda_{p} h ; \lambda_{1}, \dots, \lambda_{p} \in \Lambda \cup (-\Lambda)\}$$ 
with mesh size $h \in \mathbf{R}\setminus\{0\}$ for a finite subset $\Lambda \subset \mathbf{R}^{d}$, for integer $d \geq 1$, containing the origin. For a fixed $T \in (0, \infty)$ we define the time grid 
$$T_{\tau} := \{ t_{i} = i\tau ; i \in \{0, 1, \dots, n\}, \tau n = T\},$$ 
partitioning $[0,T]$ with mesh size $\tau$, and note that $v^{h} = v^{h}(\omega, t, x)$ depends on the parameter $\tau$ as well as $h$, since we have used the convention of writing $v^{h}_{i}$ in place of $v^{h} (t_{i})$ for $t_{i} \in T_{\tau}$. In particular, let $\xi^{\rho}_{i} = \Delta w^{\rho}(t_{i-1}) := w^{\rho}(t_{i}) - w^{\rho}(t_{i-1})$ be the $i$th increment of $w^{\rho}$ with respect to $T_{\tau}$, where, for integer $d_{1} \geq 1$, $(w^{\rho})_{\rho=1}^{d_{1}}$ is a given sequence of independent Wiener processes carried by the stochastic basis $(\Omega, \mathcal{F}, \mathcal{F}(t), P)$ that is complete with respect the filtration $\mathcal{F}(t)$ for $t \in [0,T]$. For each $i \in \{0, \dots, n\}$, the $L^{h}_{i}$ and $M^{h,\rho}_{i}$ are difference operators given by $L^{h}_{i} \phi := \mathfrak{a}^{\lambda \mu}_{i} \delta_{h,\lambda} \delta_{-h,\mu} \phi$ and $M^{h,\rho}_{i} \phi := \mathfrak{b}^{\lambda \rho}_{i} \delta_{h,\lambda} \phi$, for $\rho \in \{1, \dots, d_{1}\}$, where repeated indices indicate summation over $\lambda, \mu \in \Lambda$. We assume that $\mathfrak{a}^{\lambda\mu}_{i} =\mathfrak{a}^{\lambda \mu}_{i}(x)$ and $\mathfrak{b}^{\lambda}_{i} = (\mathfrak{b}^{\lambda\rho}_{i}(x))_{\rho=1}^{d_{1}}$ are real-valued $\mathcal{P}\times \mathcal{B}$-measurable functions on $\Omega \times T_{\tau} \times \mathbf{R}^{d}$ for all $\lambda, \mu \in \Lambda$ and further that $\mathfrak{a}^{\lambda\mu}_{i} = \mathfrak{a}^{\mu\lambda}_{i}$. Here $\mathcal{P}$ denotes the $\sigma$-algebra of predictable subsets of $\Omega \times [0,\infty)$ generated by $\mathcal{F}(t)$ and $\mathcal{B} = \mathcal{B}(\mathbf{R}^{d})$ denotes the $\sigma$-algebra of Borel subsets of $\mathbf{R}^{d}$. The spatial differences above are defined by $$\delta_{h,\lambda} \phi(x) := \frac{\phi(x+h\lambda) - \phi(x)}{h}$$ for $\lambda \in \mathbf{R}^{d}\setminus \{0\}$ and by the identity for $\lambda = 0$. We note that from this definition one can obtain both the so called ``forward'' and ``backward'' differences as $h$ can be positive or negative. 

Together with \eqref{eqn: implicit space-time scheme} we consider 
\begin{equation}
\label{eqn: implicit time scheme}
v_{i} = v_{i-1} + \left( \mathcal{L}_{i} v_{i} + f_{i} \right) \tau  + \sum_{\rho=1}^{d_{1}} \left( \mathcal{M}^{\rho}_{i-1} v_{i-1} + g^{\rho}_{i-1} \right) \xi^{\rho}_{i}
\end{equation}
for $i \in \{1, \dots, n\}$ and $(\omega,x) \in \Omega \times \mathbf{R}^{d}$ with a given initial condition. Here  $\mathcal{L}_{i} = \mathcal{L}(t_{i})$ and $\mathcal{M}^{\rho}_{i} = \mathcal{M}^{\rho}(t_{i})$ are second order and first order differential operators given by $\mathcal{L}(t) := a^{\alpha\beta}(t) D_{\alpha}D_{\beta}$ and $\mathcal{M}^{\rho}(t) := b^{\alpha\rho}(t) D_{\alpha}$, respectively, where the summation is over $\alpha, \beta \in \{0, 1,\dots, d\}$ and where $D_{\alpha} = \partial / \partial x^{\alpha}$, for $\alpha \in \{1, \dots, d\}$, while $D_{0}$ is the identity. For each $\alpha$ and $\beta$ we assume that $a^{\alpha\beta}(t) = a^{\alpha\beta}(t,x)$ and $b^{\alpha}(t) = (b^{\alpha\rho}(t,x) )_{\rho=1}^{d_{1}}$ are real-valued $\mathcal{P} \times \mathcal{B}$-measurable functions on $\Omega \times [0,T] \times \mathbf{R}^{d}$, and further that $a^{\alpha\beta}(t) = a^{\beta\alpha}(t)$ for all $t \in [0,T]$.

Equations \eqref{eqn: implicit space-time scheme} and \eqref{eqn: implicit time scheme} represent discrete schemes for approximating the solution to the Cauchy problem for 
\begin{equation}
\label{eqn: spde}
du(t,x) = (\mathcal{L}u(t,x) + f(t,x))dt + \sum_{\rho=1}^{d_{1}} (\mathcal{M}^{\rho}u(t,x) + g^{\rho}(t,x))dw^{\rho}(t)
\end{equation}
for $(\omega,t,x) \in \Omega \times [0,T] \times \mathbf{R}^{d}$ with a given initial condition $u_{0}(x) = u(0,x)$. 
Under certain compatibility assumptions, equation \eqref{eqn: implicit space-time scheme} represents an implicit space-time scheme for approximating the solution to the Cauchy problem for \eqref{eqn: spde} by replacing the differential operators with finite differences and by carrying out an implicit Euler method in time. In a similar fashion, \eqref{eqn: implicit time scheme} represents an implicit Euler method for approximating the solution to the Cauchy problem for \eqref{eqn: spde} in time. Second order linear parabolic SPDE such as \eqref{eqn: spde} arise in the nonlinear filtering of partially observable diffusion processes as the Zakai equation (\cite{Kunita:1981,Pardoux:1979,Zakai:1969,BainCrisan:2009}). Since analytic solutions to \eqref{eqn: spde} are difficult to obtain, there is a keen interest in providing accurate numerical schemes for its solution. 

Our aim is to show that the strong convergence of the spatial discretization for the space-time scheme \eqref{eqn: implicit space-time scheme} to the solution of the Cauchy problem for \eqref{eqn: spde} can be accelerated to any order of accuracy with respect to the computational effort. In general, the error of finite difference approximations in the space variable for such equations is proportional to the mesh size $h$, for example, see \cite{Yoo:1998th,Yoo:2000}. We show the strong convergence of the solution of the space-time scheme to the solution of the time scheme \eqref{eqn: implicit time scheme} can be accelerated to higher order accuracy by taking suitable mixtures of approximations using different mesh sizes.

This technique for obtaining higher order convergence, often referred to as \emph{Richardson's method} after L.F.\ Richardson who used the idea to accelerate the convergence of finite difference schemes to deterministic partial differential equations (PDE) (see \cite{Richardson:1911,RichardsonGaunt:1927}), falls under a broadly applicable category of extrapolation techniques, for instance see the survey articles \cite{Brezinski:2000,Joyce:1971}. In particular, in \cite{TalayTubaro:1990,MalliavinThalmaier:2003,KloedenPlatenHofmann:1995} Richardson's method is implemented to accelerate the weak convergence of Euler approximations for stochastic differential equations. Recently, in \cite{GyongyKrylov:2010} Gy{\"o}ngy and Krylov considered a semi-discrete scheme for solving \eqref{eqn: spde} which discretized via finite differences in the space variable, while allowing the scheme to vary continuously in time, and showed that the strong convergence of the spatial approximation can be accelerated by Richardson's method. The current paper extends these results to the implicit space-time scheme \eqref{eqn: implicit space-time scheme}. 

We must mention that for the present scheme one cannot also accelerate in time unless certain commutators of the differential operator $\mathcal{M}^{\rho}$ in equation \eqref{eqn: spde} vanish, see \cite{DavieGaines:2000}. For \emph{deterministic} PDE we plan to address the simultaneous acceleration of the convergence of approximations with respect to space and time in a future paper. Results concerning acceleration for monotone finite difference schemes for degenerate parabolic and elliptic PDE are given in \cite{GyongyKrylov:2011}, however our scheme is not necessarily monotone. 

In the next section, we present our assumptions as well as some preliminaries. Then in Section \ref{sec: Main Results} we record the main results, namely Theorems \ref{thm: expansion and estimate for the expansion error}, \ref{thm: expansion and estimate for differences of expansion error}, and \ref{thm: acceleration}, the last of which says that the convergence of the spatial approximation can be accelerated to any order of accuracy. In Section \ref{sec: Auxiliary Results} we provide results which will be needed for the proofs of Theorems \ref{thm: expansion and estimate for the expansion error} and \ref{thm: expansion and estimate for differences of expansion error}. In particular, we recall the solvability of the space-time scheme \eqref{eqn: implicit space-time scheme}, for the convenience of the reader, and present a new contribution---an estimate for the supremum of the solution to the scheme in appropriate spaces that is independent of $h$, the spatial mesh size. In Section \ref{sec: Proof of Main Results} we give the proof of a more general result and show that it implies Theorem \ref{thm: expansion and estimate for differences of expansion error} and hence Theorem \ref{thm: expansion and estimate for the expansion error}. 

We end with some notation that will be used throughout this work. Let $\ell^{2}(G_{h})$ be the set of real-valued functions $\phi$ on $G_{h}$ such that $$| \phi |_{l^{2}(G_{h})}^{2} := |h|^{d} \sum_{x\in G_{h}} |\phi (x) |^{2} < \infty$$ and note that this notation will also be used for functions in $\ell^{2}(\mathbf{R}^{d})$. 

For a nonnegative integer $m$, let $W_{2}^{m} = W_{2}^{m}(\mathbf{R}^{d})$ be the usual Hilbert-Sobolev space of functions on $\mathbf{R}^{d}$ with norm $\| \cdot \|_{m}$. We note that for $L^{2} = L^{2}(\mathbf{R}^{d}) = W^{0}_{2}$ the norm will be denoted by $\| \cdot \|_{0}$. We use the notation $D^{l}\phi$ for the collection of all $l$th order spatial derivatives of $\phi$. Let $$\mathbf{W}^{m}_{2}(T) := L^{2}(\Omega\times [0,T], \mathcal{P}, W_{2}^{m})$$ denote the space of $W_{2}^{m}$-valued square integrable predictable processes on $\Omega \times [0,T]$. These are the natural spaces in which to seek solutions to \eqref{eqn: spde}.

\section{Preliminaries and Assumptions}\label{sec: Preliminaries and Assumptions}

We begin by setting some assumptions on our operators and recalling well known results concerning the solvability and rates of convergence for our schemes. In particular, we will discuss an $\ell^{2}(G_{h})$ notion of solution and an $L^{2}$ notion of solution and recall an important lemma relating these function spaces. 

An $L^{2}$-valued continuous process $u = (u(t))_{t \in [0,T]}$ is called a generalized solution to \eqref{eqn: spde} if $u \in W^{1}_{2}$ for almost every $(\omega, t) \in \Omega \times [0,T]$, $$\int_{0}^{T} \| u(t) \|_{1}^{2} \, dt < \infty$$ almost surely, and 
\begin{equation*}
\begin{split}
(u(t),\phi) =  \int_{0}^{t} ((a^{0\beta} - D_{\alpha}a^{\alpha\beta}) D_{\beta} u(s) + f(x), \phi) - (a^{\alpha\beta}D_{\beta}u , D_{\alpha} \phi) \,ds \\ +  (u_{0}, \phi) + \sum_{\rho=1}^{d_{1}} \int_{0}^{t} (\mathcal{M}^{\rho} u(s) + g^{\rho}(s), \phi ) \, dw^{\rho}(s)
\end{split}
\end{equation*}
holds for all $t \in [0,T]$ and $\phi \in C_{0}^{\infty}(\mathbf{R}^{d})$. 

\begin{asm}
\label{asm: boundedness of coefficients}
For each $(\omega, t) \in \Omega \times [0,T]$ the functions $a^{\alpha\beta}$ are $m$ times and the functions $b^{\alpha}$ are $m+1$ times continuously differentiable in $x$. Moreover there exist constants $K_{0}$, \dots, $K_{m+1}$ such that for $l \leq m$
$$|D^{l} a^{\alpha\beta} | \leq K_{l}$$ and for $l \leq m+1$ $$|D^{l} b^{\alpha} |_{\ell_{2}} \leq K_{l}$$ 
for all values of $\alpha, \beta \in \{0, \dots, d\}$ and $(\omega,t,x) \in \Omega \times [0,T] \times \mathbf{R}^{d}$.
\end{asm}

\begin{asm}
\label{asm: strong stochastic parabolicity}
There exists a positive constant $\kappa$ such that $$\sum_{\alpha,\beta=1}^{d} (2a^{\alpha\beta} - b^{\alpha\rho}b^{\beta\rho})z^{\alpha}z^{\beta} \geq \kappa |z|^{2}$$ for all $(\omega,t,x) \in \Omega \times [0,T] \times \mathbf{R}^{d}$, $z \in \mathbf{R}^{d}$, and $\rho \in \{1, \dots, d_{1}\}$. 
\end{asm}

\begin{asm}
\label{asm: initial conditions and free terms}
The initial condition $u_{0} \in L^{2}(\Omega, \mathcal{F}_{0}, W_{2}^{m+1})$, the space of $W_{2}^{m+1}$-valued square integrable $\mathcal{F}_{0}$-measurable functions on $\Omega$. The $f$ and $g^{\rho}$, for $\rho \in \{1, \dots, d_{1}\},$ are predictable processes on $\Omega \times [0,T]$ taking values in $W_{2}^{m}$ and $W_{2}^{m+1}$, respectively. Moreover
$$ E \int_{0}^{T} ( \| f(t) \|_{m}^{2} + \| g(t) \|_{m+1}^{2} ) \, dt + E \| u_{0} \|_{m+1}^{2} < \infty, $$ 
where $\| g(t) \|_{l}^{2} := \sum_{\rho = 1}^{d_{1}} \| g(t)^{\rho} \|_{l}^{2}$. 
\end{asm}

Under Assumptions \ref{asm: boundedness of coefficients}, \ref{asm: strong stochastic parabolicity}, and \ref{asm: initial conditions and free terms}, the existence of a unique solution $u \in \mathbf{W}^{m+2}_{2}(T)$ to \eqref{eqn: spde} is a classical result (see for example \cite{Pardoux:1975,KrylovRozovskii:1977} or Theorem 5.1 from \cite{Krylov:1999}).

\begin{rmk}
\label{rmk: on Sobolev's embedding}
We note that by Sobolev's embedding of $W_{2}^{m} \subset \mathcal{C}_{b}$, the space of bounded continuous functions, for $m > d/2$ we can find a  continuous function of $x$ which is equal to $u_{0}$ almost everywhere for almost all $\omega \in \Omega$. Likewise, for each $(\omega, t) \in \Omega \times [0,T]$ there exists continuous functions of $x$ which coincide with $f(t)$ and $g^{\rho}(t)$ for almost every $x \in \mathbf{R}^{d}$. Thus, if Assumption \ref{asm: initial conditions and free terms} holds with $m > d/2$ we assume that $u_{0}$, $f(t)$, and $g^{\rho}(t)$ are continuous in $x$ for all $t \in [0,T]$. 
\end{rmk}

For a nonnegative integer $\mathfrak{m}$, let $\bar{\mathfrak{m}} := \mathfrak{m} \vee 1$ and $\Lambda_{0}:= \Lambda \setminus \{0\}$. We place the following additional requirements on our space-time scheme.

\begin{asm}
\label{asm: boundedness of space-time scheme coefficients}
For all $\omega \in \Omega$, for $i \in \{0, \dots, n\}$, for $\lambda,\mu \in \Lambda_{0}$, and for $\nu \in \Lambda$: the $\mathfrak{a}^{\lambda \mu}$ are $\bar{\mathfrak{m}}$ times continuously differentiable in $x$; the $\mathfrak{a}^{0\nu}$ and $\mathfrak{a}^{\nu 0}$ are $\mathfrak{m}$ times continuously differentiable in $x$; and the $\mathfrak{b}^{\nu}$ are $\mathfrak{m}$ times continuously differentiable in $x$.  Moreover there exist constants $A_{0}$, \dots, $A_{\bar{\mathfrak{m}}}$ such that for $\lambda,\mu \in \Lambda_{0}$ and $j\leq \bar{\mathfrak{m}}$ we have $$|D^{j}\mathfrak{a}^{\lambda\mu}| \leq A_{j}$$ and for $\lambda \in \Lambda$ and $j \leq \mathfrak{m}$ we have 
$$|D^{j} \mathfrak{a}^{\lambda 0}| \leq A_{j},\quad |D^{j} \mathfrak{a}^{0 \lambda}| \leq A_{j},\, \text{ and }\,  |D^{j} \mathfrak{b}^{\lambda}| \leq A_{j}$$
for all $(\omega, x) \in \Omega \times \mathbf{R}^{d}$ for $i \in \{0, \dots, n\}$. 
\end{asm}

\begin{asm}
\label{asm: stochastic parabolicity for space-time scheme}
There exists a positive constant $\kappa$ such that 
$$\sum_{\lambda, \mu \in \Lambda_{0}} (2\mathfrak{a}^{\lambda \mu} - \mathfrak{b}^{\lambda \rho}\mathfrak{b}^{\mu \rho})z_{\lambda}z_{\mu} \geq \kappa \sum_{\lambda \in \Lambda_{0}} z_{\lambda}^{2}$$
for all $(\omega, x) \in \Omega \times \mathbf{R}^{d}$, $i \in \{0, \dots, n\}$, $\rho \in \{ 1, \dots, d_{1}\}$, and numbers $z_{\lambda}$, $\lambda \in \Lambda_{0}$.
\end{asm}

For \eqref{eqn: implicit space-time scheme} to be consistent with \eqref{eqn: spde} we also require the following. 

\begin{asm}
\label{asm: consistency}
For $i \in \{0, \dots, n\}$ 
$$\mathfrak{a}^{00}_{i} = a^{00}_{i},$$ 
$$\sum_{\lambda \in \Lambda_{0}} \mathfrak{a}^{\lambda 0}_{i} \lambda^{\alpha} + \sum_{\mu \in \Lambda_{0}} \mathfrak{a}^{0\mu}_{i}\mu^{\alpha} = a^{\alpha0}_{i} + a^{0\alpha}_{i},$$ 
$$\sum_{\lambda,\mu \in \Lambda_{0}} \mathfrak{a}^{\lambda\mu}_{i} \lambda^{\alpha}\mu^{\beta} = a^{\alpha\beta}_{i},$$ 
$$\mathfrak{b}^{0 \rho}_{i} = b^{0\rho}_{i},$$
and $$\sum_{\lambda \in \Lambda_{0}} \mathfrak{b}^{\lambda \rho}_{i} \lambda^{\alpha} = b^{\alpha\rho}_{i}$$
for all $\alpha,\beta \in \{1, \dots, d\}$ and $\rho \in \{1, \dots, d_{1}\}$. 
\end{asm}

\begin{rmk}
\label{rmk: on equivalence of assumptions}
If $\Lambda_{0}$ is a basis for $\mathbf{R}^{d}$ and Assumption \ref{asm: consistency} holds then Assumption \ref{asm: boundedness of coefficients} implies \ref{asm: boundedness of space-time scheme coefficients} and \ref{asm: strong stochastic parabolicity} implies \ref{asm: stochastic parabolicity for space-time scheme} with $\mathfrak{m} = m$.
\end{rmk}

A solution $v^{h} = (v^{h}_{i})_{i=1}^{n}$ to \eqref{eqn: implicit space-time scheme} with an $\ell^{2}(G_{h})$-valued $\mathcal{F}_{0}$-measurable initial condition $v^{h}_{0}$ is understood as a sequence of $\ell^{2}(G_{h})$-valued random variables satisfying \eqref{eqn: implicit space-time scheme} on the grid $G_{h}$. The following result is well known and we provide it for the sake of completeness.

\begin{thm}
\label{thm: l2 valued solution to the space-time scheme}
Let $f$ and $g^{\rho}$ be $\mathcal{F}_{i}$-adapted $\ell^{2}(G_{h})$-valued processes and let $v^{h}_{0}$ be an $\mathcal{F}_{0}$-measurable $\ell^{2}(G_{h})$-valued initial condition. If Assumption \ref{asm: boundedness of space-time scheme coefficients} holds then \eqref{eqn: implicit space-time scheme} admits a unique $\ell^{2}(G_{h})$-valued solution for sufficiently small $\tau$. 
\end{thm}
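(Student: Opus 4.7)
The plan is to recast \eqref{eqn: implicit space-time scheme} as an explicit recursion in $i$. Grouping the implicit term on the left,
$$(I - \tau L^{h}_{i}) v^{h}_{i} = v^{h}_{i-1} + \tau f_{i} + \sum_{\rho=1}^{d_{1}} (M^{h,\rho}_{i-1} v^{h}_{i-1} + g^{\rho}_{i-1}) \xi^{\rho}_{i} =: F_{i},$$
so the theorem reduces to showing that, for each $i$, the operator $I - \tau L^{h}_{i}$ is a bijection on $\ell^{2}(G_{h})$. Granting this, I proceed by induction: given an $\mathcal{F}_{0}$-measurable $v^{h}_{0} \in \ell^{2}(G_{h})$ and an $\mathcal{F}_{t_{i-1}}$-measurable $v^{h}_{i-1} \in \ell^{2}(G_{h})$, the right-hand side $F_{i}$ lies in $\ell^{2}(G_{h})$ and is $\mathcal{F}_{t_{i}}$-measurable, so one defines $v^{h}_{i} := (I - \tau L^{h}_{i})^{-1} F_{i}$ as the unique $\ell^{2}(G_{h})$-valued, $\mathcal{F}_{t_{i}}$-measurable solution at step $i$.

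To invert $I - \tau L^{h}_{i}$, I will bound the operator norm of $L^{h}_{i}$ on $\ell^{2}(G_{h})$. Each grid translation $\phi \mapsto \phi(\cdot + h\lambda)$ is an isometry of $\ell^{2}(G_{h})$, so $\delta_{h,\lambda}$ and $\delta_{-h,\mu}$ each have operator norm at most $2/|h|$. Because $\Lambda$ is finite and, by Assumption \ref{asm: boundedness of space-time scheme coefficients}, the multipliers $\mathfrak{a}^{\lambda \mu}_{i}$ are bounded by $A_{0}$ pointwise in $(\omega, x)$, composing and summing gives
$$\| L^{h}_{i} \phi \|_{\ell^{2}(G_{h})} \leq C(|\Lambda|, A_{0})\, h^{-2}\, \| \phi \|_{\ell^{2}(G_{h})},$$
with a constant independent of $\omega$ and $i$. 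Thus for $\tau$ sufficiently small (any $\tau < h^{2}/C$ suffices) the Neumann series $\sum_{k \geq 0} \tau^{k} (L^{h}_{i})^{k}$ converges in operator norm and furnishes a bounded inverse for $I - \tau L^{h}_{i}$.

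The only point that deserves a moment's care is the measurability of $(I - \tau L^{h}_{i})^{-1}$ as an operator-valued random variable: since the partial sums of the Neumann series are polynomials in $L^{h}_{i}$ whose coefficients are $\mathcal{F}_{t_{i}}$-measurable by Assumption \ref{asm: boundedness of space-time scheme coefficients}, their norm-limit inherits the same measurability, and the induction closes. Observe that parabolicity (Assumption \ref{asm: stochastic parabolicity for space-time scheme}) plays no role here; it enters only later, when one seeks a priori estimates on $v^{h}$ that are uniform in $h$. The main obstacle is therefore not existence per se but recognising that the implicit step is purely an application of bounded linear inversion on the Hilbert space $\ell^{2}(G_{h})$, once the elementary $h$-dependent operator bound is in hand.
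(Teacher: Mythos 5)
Your proposal is correct and follows essentially the same route as the paper: bound the $\ell^{2}(G_{h})$ operator norm of $L^{h}_{i}$ (which is of order $h^{-2}$, uniformly in $\omega$ and $i$, by Assumption \ref{asm: boundedness of space-time scheme coefficients}), invert $I - \tau L^{h}_{i}$ for $\tau$ small via perturbation of the identity, and build the solution iteratively. Your explicit Neumann-series bound and the remark on measurability and on parabolicity being unnecessary are just slightly more detailed versions of what the paper's proof already does.
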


\begin{proof}
By Assumption \ref{asm: boundedness of space-time scheme coefficients}, for each $i \in \{1, \dots, n\}$, equation \eqref{eqn: implicit space-time scheme} is a recursion with bounded linear operators on $\ell^{2}(G_{h})$. In particular, for each $h$ the operator norm of $\tau L^{h}$ is smaller than a constant less than $1$ for sufficiently small $\tau$, independently of $\omega \in \Omega$. Hence $(I - \tau L^{h})$ is invertible in $\ell^{2}(G_{h})$ for sufficiently small $\tau$, by the invertibility of operators in a neighborhood of the (invertible) identity operator $I$. Therefore, for $i \in \{1, \dots, n\}$ we are guaranteed an $\ell^{2}(G_{h})$-valued $\phi$ satisfying $(I - \tau L^{h}_{i})\phi = \psi$ for all $\psi \in \ell^{2}(G_{h})$ and moreover this solution is easily seen to be unique. Thus we can construct a unique solution to the scheme iteratively. 
\end{proof}

The rate of convergence of the solution $v^{h}$ of \eqref{eqn: implicit space-time scheme} (and $v$ of \eqref{eqn: implicit time scheme}) to the solution $u$ of \eqref{eqn: spde} with initial condition $u_{0}$ is known. In \cite{GyongyMillet:2005,GyongyMillet:2007,GyongyMillet:2009rc}, Gy{\"o}ngy and Millet obtained the rate of convergence for a class of equations in the nonlinear setting of which our schemes are a special case. Namely, in the situation of Remark \ref{rmk: on equivalence of assumptions}, if Assumptions \ref{asm: boundedness of coefficients}, \ref{asm: strong stochastic parabolicity}, and \ref{asm: initial conditions and free terms} hold with $a^{\alpha\beta}$, $b^{\alpha}$, $f$, and $g^{\rho}$ all H{\"o}lder continuous in time with exponent $1/2$ then 
\begin{align*}
& E \max_{i \leq n} \sum_{|\lambda| \leq m + 1} \sum_{x \in G_{h}} | \delta_{h,\lambda}( v^{h}_{i}(x) - u_{i}(x)) |^{2} h^{d} \\
& + E \tau \sum_{i=1}^{n} \sum_{|\lambda| \leq m + 2} \sum_{x \in G_{h}} | \delta_{h,\lambda}( v^{h}_{i}(x) - u_{i}(x)) |^{2} h^{d} \leq N (h^{2} + \tau)
\end{align*}
for sufficiently small $\tau$, $h \in (0,1)$, and for a constant $N$ that is independent of $h$ and $\tau$. The principal interest of this paper is to investigate higher order convergence with respect to the spatial discretization, that is, to obtain an estimate, similar to the above, with a higher power of $h$ by applying Richardson's method. 

While it is natural to seek solutions to \eqref{eqn: implicit space-time scheme} on the grid, carrying out our analysis on the whole space will have certain advantages when it comes to providing estimates for solutions to our schemes. Indeed, we observe that \eqref{eqn: implicit space-time scheme} is well defined not only on $G_{h}$ but for all $x \in \mathbf{R}^{d}$. Therefore, we introduce an alternate notion of solution. A solution to \eqref{eqn: implicit space-time scheme} on $\Omega \times T_{\tau} \times \mathbf{R}^{d}$ with an $L^{2}$-valued $\mathcal{F}_{0}$-measurable initial condition $v^{h}_{0}$ is a sequence $v^{h} = (v_{i}^{h})_{i=1}^{n}$ of $L^{2}$-valued random variables satisfying \eqref{eqn: implicit space-time scheme}. In a similar spirit, solutions to \eqref{eqn: implicit time scheme} with the appropriate initial condition are understood as sequences of $W^{1}_{2}$-valued random variables satisfying \eqref{eqn: implicit time scheme} in $W^{-1}_{2}$. The next result follows immediately from the considerations in the proof of Theorem \ref{thm: l2 valued solution to the space-time scheme}.

\begin{thm}
\label{thm: L2 valued solution to the space-time scheme}
Let $f$ and $g^{\rho}$ be $\mathcal{F}_{i}$-adapted $L^{2}$-valued processes and let $v^{h}_{0}$ be an $\mathcal{F}_{0}$-measurable $L^{2}$-valued initial condition. If Assumption \ref{asm: boundedness of space-time scheme coefficients} holds then \eqref{eqn: implicit space-time scheme} admits a unique $L^{2}$-valued solution for sufficiently small $\tau$. 
\end{thm}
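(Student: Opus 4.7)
The plan is to mirror, almost verbatim, the argument given for Theorem \ref{thm: l2 valued solution to the space-time scheme}, with the only change being to reinterpret every operator appearing in \eqref{eqn: implicit space-time scheme} as a bounded linear operator on $L^{2}(\mathbf{R}^{d})$ rather than on $\ell^{2}(G_{h})$. The key observation that makes this transfer essentially automatic is that the finite difference $\delta_{h,\lambda}$ is a bounded operator on $L^{2}(\mathbf{R}^{d})$: translation by $h\lambda$ preserves the $L^{2}$-norm by translation-invariance of Lebesgue measure, so
$$
\| \delta_{h,\lambda} \phi \|_{0} \leq \frac{2}{|h|}\,\|\phi\|_{0} \qquad \text{for all } \phi \in L^{2}(\mathbf{R}^{d}).
$$

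First I would record that, under Assumption \ref{asm: boundedness of space-time scheme coefficients} (with $j=0$), the coefficients $\mathfrak{a}^{\lambda\mu}_{i}$ and $\mathfrak{b}^{\lambda\rho}_{i}$ are uniformly bounded by $A_{0}$ on $\Omega \times \mathbf{R}^{d}$ for every $i$. Combined with the $L^{2}$-bound on $\delta_{h,\lambda}$, this yields that $L^{h}_{i}$ and $M^{h,\rho}_{i}$ are bounded linear operators on $L^{2}(\mathbf{R}^{d})$, with operator norms controlled by a constant depending only on $h$, $A_{0}$ and $|\Lambda|$, and in particular independent of $\omega \in \Omega$ and $i$. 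Consequently, for $\tau$ small enough that $\tau \|L^{h}_{i}\|_{L^{2}\to L^{2}}$ is strictly less than one uniformly in $i$ and $\omega$, the operator $I - \tau L^{h}_{i}$ is invertible on $L^{2}(\mathbf{R}^{d})$ by a Neumann series argument, exactly as in the proof of Theorem \ref{thm: l2 valued solution to the space-time scheme}.

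Given this, the solution is constructed by induction on $i$. Assuming $v^{h}_{i-1}$ is an $\mathcal{F}_{i-1}$-measurable $L^{2}$-valued random variable, the right-hand side
$$
\psi_{i} := v^{h}_{i-1} + f_{i}\tau + \sum_{\rho=1}^{d_{1}} \bigl( M^{h,\rho}_{i-1} v^{h}_{i-1} + g^{\rho}_{i-1} \bigr) \xi^{\rho}_{i}
$$
is $\mathcal{F}_{i}$-measurable and takes values in $L^{2}(\mathbf{R}^{d})$, thanks to the boundedness of $M^{h,\rho}_{i-1}$ on $L^{2}$ and the hypotheses on $f_{i}$ and $g^{\rho}_{i-1}$. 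Setting $v^{h}_{i} := (I - \tau L^{h}_{i})^{-1}\psi_{i}$ then provides the required $\mathcal{F}_{i}$-measurable $L^{2}$-valued solution at step $i$, and its uniqueness follows from the invertibility of $I - \tau L^{h}_{i}$.

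I do not anticipate any real obstacle here; the content of the theorem is that the algebraic/functional-analytic scaffolding of the proof of Theorem \ref{thm: l2 valued solution to the space-time scheme} survives the change of ambient Hilbert space, and the only point to verify carefully is the $L^{2}$-boundedness of $\delta_{h,\lambda}$, which follows at once from translation invariance. Accordingly I would present the argument very briefly, simply pointing out these two observations and referring back to the proof of Theorem \ref{thm: l2 valued solution to the space-time scheme} for the remaining details.
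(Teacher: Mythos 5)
Your proposal is correct and is exactly what the paper intends: the paper disposes of this theorem by remarking that it "follows immediately from the considerations in the proof of Theorem \ref{thm: l2 valued solution to the space-time scheme}," i.e., boundedness of $L^{h}_{i}$ and $M^{h,\rho}_{i}$ on the ambient Hilbert space, invertibility of $I-\tau L^{h}_{i}$ for small $\tau$, and iterative construction. Your explicit verification that $\delta_{h,\lambda}$ is bounded on $L^{2}(\mathbf{R}^{d})$ by translation invariance is the only detail the paper leaves implicit, and it is handled correctly.
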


By Sobolev's embedding theorem, for $l > d/2$ there exists a linear operator $I : W^{l}_{2} \to C_{b}$ such that $\phi(x) = I \phi (x)$ for almost every $x \in \mathbf{R}^{d}$ and $\sup_{x\in \mathbf{R}^{d}} |I\phi(x)| \leq N \| \phi \|_{l}$ for all $\phi \in W^{l}_{2}$ where $N$ is a constant. We recall the following useful embedding of $W^{l}_{2} \subseteq \ell^{2}(G_{h})$ from \cite{GyongyKrylov:2010}.

\begin{lem}
\label{lem: embedding}
Let $l > d/2$ and $|h| \in (0,1)$. For all $\phi \in W^{l}_{2}$ the embedding
\begin{equation}
\label{eqn: embedding}
\sum_{x\in G_{h}} | I \phi(x)|^{2} |h|^{d} \leq N \| \phi \|_{l}^{2}
\end{equation}
holds for a constant $N$ that depends only on $d$ and $l$. 
\end{lem}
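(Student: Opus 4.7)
The plan is to obtain the discrete $\ell^2(G_h)$ norm from a pointwise Sobolev estimate at each grid point, then sum using a bounded-overlap argument that relies on the uniform discreteness of $G_h$.

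Write $\psi := I\phi \in C_b \cap W_2^l$. First I would establish a localized, rescaled Sobolev bound at a single grid point. The classical embedding $W_2^l(B_1) \hookrightarrow C_b(B_1)$, valid because $l > d/2$, gives a constant $N = N(d,l)$ with $|\psi(x_0)|^2 \leq N \|\psi\|_{W_2^l(B_1(x_0))}^2$ whenever $\psi$ is defined on $B_1(x_0)$. Applying this to the rescaled function $y \mapsto \psi(x_0 + |h| y)$ on the unit ball, and using $D^j[\psi(x_0 + |h|\cdot)] = |h|^j (D^j\psi)(x_0+|h|\cdot)$, I obtain
\begin{equation*}
|\psi(x_0)|^2 \leq N \sum_{j=0}^{l} |h|^{2j - d} \int_{B_{|h|}(x_0)} |D^j \psi(y)|^2 \, dy
\end{equation*}
for every $x_0 \in \mathbf{R}^d$, valid because $|h| \in (0,1)$.

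Next I would multiply through by $|h|^d$, sum over $x \in G_h$, and swap the sum and integral to get
\begin{equation*}
\sum_{x \in G_h} |h|^d |I\phi(x)|^2 \leq N \sum_{j=0}^l |h|^{2j} \int_{\mathbf{R}^d} n_h(y) |D^j\psi(y)|^2 \, dy,
\end{equation*}
where $n_h(y) := \#\{x \in G_h : |y - x| < |h|\}$ encodes the overlap of the balls $B_{|h|}(x)$ as $x$ ranges over $G_h$. Since $|h|^{2j} \leq 1$ for $j \leq l$, the desired estimate $N \|\phi\|_l^2$ follows as soon as $n_h(y)$ is bounded uniformly in $y$ and $h$.

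The main obstacle is exactly this uniform bound on $n_h(y)$. The point is that $G_h = h \cdot G_1$, where $G_1$ is the $\mathbf{Z}$-span of the finite set $\Lambda \cup (-\Lambda) \subset \mathbf{R}^d$; under the standing hypothesis that this span forms a discrete subgroup of $\mathbf{R}^d$, there is a minimal separation $\delta_\Lambda > 0$ between distinct points of $G_1$, hence a separation of at least $\delta_\Lambda |h|$ in $G_h$. A standard packing argument (balls of radius $\delta_\Lambda |h|/2$ centered at the grid points in $B_{|h|}(y)$ are disjoint and contained in $B_{(1+\delta_\Lambda/2)|h|}(y)$) then gives $n_h(y) \leq N_\Lambda$ for a constant depending only on $d$ and $\Lambda$. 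Combining this with the display above yields the claimed embedding.
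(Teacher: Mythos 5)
Your argument is essentially the paper's own proof: a rescaled local Sobolev estimate giving $|\phi(z)|^{2}\leq N|h|^{-d}\sum_{|\alpha|\leq l}\int_{B_{|h|}(z)}|D^{\alpha}\phi|^{2}\,dx$ at each grid point, then multiplying by $|h|^{d}$ and summing over $G_{h}$. The only difference is that you make explicit the bounded-overlap (packing) step — including its reliance on the discreteness of the grid generated by $\Lambda$, with separation proportional to $|h|$ — which the paper leaves tacit when it passes from $\sum_{z\in G_{h}}\int_{B_{h}(z)}|D^{\alpha}\phi|^{2}\,dx$ to $N\|\phi\|_{l}^{2}$.
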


\begin{proof}
For $z \in \mathbf{R}^d$ let $B_{r}(x) := \{ x\in \mathbf{R}^{d} ; |x -z| < r \}$. By the embedding of $W^{l}_{2}$ into $C_{b}$, for $\phi \in C_{b}$ we have 
\begin{align*}
 |\phi(z)|^{2} &\leq \sup_{x \in B_{1}(0)} \phi^{2} (z + hx)\\
 	&\leq N \sum_{|\alpha| \leq l } h^{2|\alpha|} \int_{B_{1}(0)} |(D^{\alpha}\phi)(z+hx)|^{2}\, dx \\
	&\leq N \sum_{|\alpha| \leq l } |h|^{2|\alpha|-d} \int_{B_{h}(z)} |(D^{\alpha}\phi)(x)|^{2}\, dx\\
	&\leq N |h|^{-d} \sum_{|\alpha|\leq l} \int_{B_{h}(z)} |(D^{\alpha}\phi)(x)|^{2}\, dx
\end{align*}
for a constant $N$ depending only on $d$ and $l$ and thus
\begin{equation*}
|\phi|_{\ell^{2}(G_{h})}^{2} = \sum_{z\in G_{h}} |\phi(z)|^{2} |h|^{d} \leq N \sum_{|\alpha|\leq l} \sum_{z\in G_{h}} \int_{B_{h}(z)} |(D^{\alpha}\phi)(x)|^{2}\, dx,
\end{equation*}
which yields the desired embedding.
\end{proof}

We will show that the restriction of a continuous modification of an $L^{2}$-valued solution to \eqref{eqn: implicit space-time scheme} to the grid $G_{h}$ is also a solution in the $\ell^{2}(G_{h})$ sense. Thus we will carry out our analysis in the whole space and obtain estimates independent of $h$ in appropriate Sobolev spaces for the $L^{2}$-valued solutions of \eqref{eqn: implicit space-time scheme} and \eqref{eqn: implicit time scheme}.
 
We provide the aforementioned Sobolev space estimates in Section \ref{sec: Auxiliary Results}. We then use these estimates in Section \ref{sec: Proof of Main Results} to prove the main results, which are the focus of the next section.

\section{Main Results}\label{sec: Main Results}
To accelerate the convergence of the spatial approximation by Richardson's method we must have an expansion for the solution $v^{h}$ to \eqref{eqn: implicit space-time scheme} with initial data $v^{h}_{0} = u_{0}$ in powers of the mesh size $h$. This relies on the possibility of proving the existence of sequences of random fields $v^{(0)}(x)$, $v^{(1)}(x)$, \dots, $v^{(k)}(x)$, for $x \in \mathbf{R}^{d}$ and integer $k \geq 0$, satisfying certain properties. Namely,  $v^{(0)}, \dots, v^{(k)}$ are independent of $h$; $v^{(0)}$ is the solution of \eqref{eqn: implicit time scheme} with initial value $u_{0}$; and an expansion 
\begin{equation}
\label{eqn: expansion}
v^{h}_{i} (x) = \sum_{j=0}^{k} \frac{h^{j}}{j!} v^{(j)}_{i}(x) + R^{\tau,h}_{i}(x)
\end{equation}
holds almost surely for $i \in \{1, \dots, n\}$ and $x \in G_{h}$, where $R^{\tau,h}$ is an $\ell_{2}(G_{h})$-valued adapted process such that 
\begin{equation}
\label{eqn: estimate for expansion error}
E \max_{i \leq n} \sup_{x \in G_{h}} |R^{\tau,h}_{i}(x)|^{2} \leq N h^{2(k+1)}\mathcal{K}_{m}
\end{equation}
for $$\mathcal{K}_{m} :=  E \| u_{0} \|_{m+1}^{2} + E \tau \sum_{i = 0}^{n} ( \| f_{i} \|_{m}^{2} + \| g_{i} \|_{m+1}^{2} ) < \infty$$ and a constant $N$ independent of $\tau$ and $h$. 

Our first result concerns the existence of such an expansion.

\begin{thm}
\label{thm: expansion and estimate for the expansion error}
If Assumptions \ref{asm: boundedness of coefficients}, \ref{asm: strong stochastic parabolicity}, \ref{asm: initial conditions and free terms}, \ref{asm: boundedness of space-time scheme coefficients}, \ref{asm: stochastic parabolicity for space-time scheme}, and \ref{asm: consistency} hold with $$\mathfrak{m} = m > k + 1 + \frac{d}{2}$$ for an integer $k \geq 0$ then expansion \eqref{eqn: expansion} and estimate \eqref{eqn: estimate for expansion error} hold for a constant $N$ depending only on $d$, $d_{1}$, $\Lambda$, $m$, $K_{0}$, \dots, $K_{m+1}$, $A_{0}$, \dots, $A_{m}$, $\kappa$, and $T$. 
\end{thm}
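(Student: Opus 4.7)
The plan is to construct the auxiliary random fields $v^{(j)}$ by matching powers of $h$ in a formal Taylor expansion of the finite-difference operators and then control the remainder $R^{\tau,h}$ via the $h$-independent Sobolev-type estimates to be proved in Section \ref{sec: Auxiliary Results}. For $\phi$ sufficiently smooth, Taylor's formula gives an expansion
\[
\delta_{h,\lambda}\phi(x) = \sum_{s=1}^{k+1}\frac{h^{s-1}}{s!}(\lambda^{\alpha}D_{\alpha})^{s}\phi(x) + h^{k+1}r_{k+1}[\phi](x,h,\lambda),
\]
from which one obtains operator expansions $L^{h}_{i}=\sum_{s=0}^{k}h^{s}L^{(s)}_{i}+h^{k+1}\mathcal{R}^{L}_{i,h}$ and $M^{h,\rho}_{i}=\sum_{s=0}^{k}h^{s}M^{(s),\rho}_{i}+h^{k+1}\mathcal{R}^{M,\rho}_{i,h}$, where $L^{(s)}_{i}$ and $M^{(s),\rho}_{i}$ are linear differential operators of orders $s+2$ and $s+1$, respectively, with coefficients built algebraically from the $\mathfrak{a}^{\lambda\mu}_{i}$ and $\mathfrak{b}^{\lambda\rho}_{i}$. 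Crucially, Assumption \ref{asm: consistency} identifies the leading-order operators as $L^{(0)}_{i}=\mathcal{L}_{i}$ and $M^{(0),\rho}_{i}=\mathcal{M}^{\rho}_{i}$.

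Next, substituting the ansatz $v^{h}_{i}=\sum_{j=0}^{k}\tfrac{h^{j}}{j!}v^{(j)}_{i}+R^{\tau,h}_{i}$ into \eqref{eqn: implicit space-time scheme} and matching coefficients of $h^{j}$ for $j=0,1,\ldots,k$ yields a cascade of linear implicit schemes of the form \eqref{eqn: implicit time scheme}. The order-$h^{0}$ equation identifies $v^{(0)}$ with the solution of \eqref{eqn: implicit time scheme} with initial data $u_{0}$ and forcings $f, g^{\rho}$. For $j\geq 1$, $v^{(j)}$ solves the same implicit time scheme with zero initial data and with forcings
\[
\tilde f^{(j)}_{i}=\sum_{s=1}^{j}\binom{j}{s}L^{(s)}_{i}v^{(j-s)}_{i},\qquad \tilde g^{(j),\rho}_{i-1}=\sum_{s=1}^{j}\binom{j}{s}M^{(s),\rho}_{i-1}v^{(j-s)}_{i-1}.
\]
Induction on $j$, combined with the classical solvability theory for \eqref{eqn: implicit time scheme} cited after Assumption \ref{asm: initial conditions and free terms}, produces $v^{(j)}$ in Sobolev spaces of progressively lower regularity, losing derivatives at each step in accordance with the orders of $L^{(s)}$ and $M^{(s),\rho}$. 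The hypothesis $\mathfrak{m}=m>k+1+d/2$ is calibrated so that all $v^{(j)}$ with $j\leq k$ lie in $\mathbf{W}^{l}_{2}(T)$ for some $l>d/2$, with moments controlled by $\mathcal{K}_{m}$.

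Finally, the remainder $R^{\tau,h}$ satisfies \eqref{eqn: implicit space-time scheme} with zero initial data and a forcing collecting the Taylor-truncation terms $h^{k+1}\mathcal{R}^{L}_{i,h}v^{(j)}_{i}$ and $h^{k+1}\mathcal{R}^{M,\rho}_{i-1,h}v^{(j)}_{i-1}$; this forcing has $W^{l}_{2}$-norm of order $h^{k+1}$, bounded in mean square by a constant multiple of $\mathcal{K}_{m}$. Applying the supremum-in-$i$ Sobolev estimate for $L^{2}$-valued solutions of \eqref{eqn: implicit space-time scheme} announced in Section \ref{sec: Auxiliary Results}, in the norm $\|\cdot\|_{l}$ and with a constant independent of $h$ and $\tau$, yields $E\max_{i\leq n}\|R^{\tau,h}_{i}\|^{2}_{l}\leq Nh^{2(k+1)}\mathcal{K}_{m}$; Sobolev embedding $W^{l}_{2}\subset C_{b}$ then converts this into the pointwise bound \eqref{eqn: estimate for expansion error}. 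The main obstacle will be precisely this $h$-independent, supremum-in-$i$ Sobolev estimate for the space-time scheme: standard energy methods give control of $\max_{i}\|v^{h}_{i}\|^{2}_{0}+\tau\sum_{i}\|v^{h}_{i}\|^{2}_{1}$, but we require the strengthening to $E\max_{i}\|\cdot\|^{2}_{l}$ with constants independent of both $h$ and $\tau$, which is exactly the new auxiliary contribution the paper advertises. Once that is in hand, the remainder of the argument reduces to routine Taylor bookkeeping and invocation of Lemma \ref{lem: embedding}.
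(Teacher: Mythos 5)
Your proposal follows essentially the same route as the paper: Taylor-expand the difference operators $L^{h}$, $M^{h,\rho}$ in $h$ (with Assumption \ref{asm: consistency} giving $\mathcal{L}$, $\mathcal{M}^{\rho}$ at order zero), define $v^{(j)}$ through the resulting cascade of implicit time schemes with zero initial data and forcings $\sum_{s}\binom{j}{s}L^{(s)}v^{(j-s)}$, observe that the remainder solves the space-time scheme with an $O(h^{k+1})$ forcing, and close via the $h$- and $\tau$-independent $E\max_{i\leq n}\|\cdot\|_{l}^{2}$ estimate for the space-time scheme plus Sobolev embedding --- exactly the structure of Theorems \ref{thm: Sobolev valued solution to space-time scheme and estimate}, \ref{thm: solvability of the time system with estimate}, Lemma \ref{lem: solvability of auxiliary equation}, and Theorem \ref{thm: generalized result for auxiliary equation}, and you correctly identify the $h$-uniform supremum estimate as the new ingredient. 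The only detail you gloss over, which the paper settles by a mollification argument, is that the continuous modification of the $L^{2}$-valued solution restricted to $G_{h}$ coincides almost surely with the $\ell^{2}(G_{h})$-valued grid solution, so that the bound proved on $\mathbf{R}^{d}$ really applies to $v^{h}$ in \eqref{eqn: estimate for expansion error}.
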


In the proof of Theorem \ref{thm: expansion and estimate for the expansion error}, as $v^{h}$ is defined not only on $G_{h}$ but for all $x \in \mathbf{R}^{d}$, we will see that one can replace $G_{h}$ in  \eqref{eqn: estimate for expansion error} with $\mathbf{R}^{d}$. We also note that in the situation of Remark \ref{rmk: on equivalence of assumptions}, if Assumptions \ref{asm: boundedness of coefficients} and \ref{asm: strong stochastic parabolicity} hold with $m > k + 1 + d/2$ then the conditions of Theorem \ref{thm: expansion and estimate for the expansion error} are satisfied. 

Taking differences of expansion \eqref{eqn: expansion} clearly yields $$\delta_{h,\lambda} v^{h}_{i} (x) = \sum_{j=0}^{k} \frac{h^{j}}{j!} \delta_{h,\lambda} v^{(j)}_{i} (x) + \delta_{h,\lambda} R^{\tau, h}_{i} (x)$$ for any $\lambda = (\lambda_{1},\dots,\lambda_{p}) \in \Lambda^{p}$, for integer $p \geq 0$, where $\Lambda^{0} := \{0\}$ and $\delta_{h,\lambda} := \delta_{h,\lambda_{1}} \times \dots \times \delta_{h,\lambda_{p}}$. The bound on $\delta_{h,\lambda} R^{\tau, h}$ is not obvious, nevertheless we have the following generalization of the above theorem.

\begin{thm}
\label{thm: expansion and estimate for differences of expansion error}
If the assumptions of Theorem \ref{thm: expansion and estimate for the expansion error} hold with $$\mathfrak{m} = m > k + p + 1 + \frac{d}{2}$$ for a nonnegative integer $p$ then for $\lambda \in \Lambda^{p}$ expansion \eqref{eqn: expansion} and 
$$E \max_{i \leq n} \sup_{x \in G_{h}} |\delta_{h,\lambda}R^{\tau, h}_{i} (x)|^{2} + E \max_{i \leq n} |h|^{d}\sum_{x \in G_{h}} |\delta_{h,\lambda} R^{\tau, h}_{i}(x) |^{2} \leq N h^{2(k+1)}\mathcal{K}_{m}$$ hold for a constant $N$ depending only on $d$, $d_{1}$, $\Lambda$, $m$, $K_{0}$, \dots, $K_{m+1}$, $A_{0}$, \dots, $A_{m}$, $\kappa$, and $T$.
\end{thm}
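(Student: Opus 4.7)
The strategy, consistent with the outline in Section \ref{sec: Introduction}, is to derive both Theorem \ref{thm: expansion and estimate for the expansion error} and Theorem \ref{thm: expansion and estimate for differences of expansion error} from a single whole-space Sobolev estimate: under the given hypotheses expansion \eqref{eqn: expansion} holds for every $x \in \mathbf{R}^d$ with $h$-independent random fields $v^{(0)},\dots,v^{(k)}$ and
$$ E \max_{i \leq n} \| R^{\tau,h}_i \|_r^2 \leq N h^{2(k+1)} \mathcal{K}_m $$
for any index $r \geq 0$, provided $m$ is sufficiently large relative to $k$ and $r$. Granting this auxiliary bound, Theorem \ref{thm: expansion and estimate for differences of expansion error} follows by applying $\delta_{h,\lambda}$ to both sides of \eqref{eqn: expansion}: the elementary bound $\|\delta_{h,\mu}\phi\|_0 \leq |\mu|\,\|\phi\|_1$, used iteratively, shows that $\delta_{h,\lambda}: W^{l+p}_2 \to W^l_2$ has operator norm independent of $h$; taking $r = l + p$ with $l > d/2$ and passing to the $\ell^2(G_h)$ bound via Lemma \ref{lem: embedding} and to the supremum bound via Sobolev embedding $W^l_2 \hookrightarrow C_b$ then yields the claimed estimate. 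Theorem \ref{thm: expansion and estimate for the expansion error} is the case $p = 0$, and the hypothesis $m > k + p + 1 + d/2$ tracks through as the regularity needed for $r = l + p$ with $l > d/2$.

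To identify the $v^{(j)}$, I would expand the discrete operators in powers of $h$. Taylor's formula gives $\delta_{h,\lambda}\phi = D_\lambda \phi + \sum_{j \geq 1} h^j P_{j,\lambda}\phi$ with $P_{j,\lambda}$ a homogeneous differential operator of order $j+1$, and analogously for $\delta_{h,\lambda}\delta_{-h,\mu}\phi$. Combining these with the coefficients $\mathfrak{a}^{\lambda\mu}_i, \mathfrak{b}^{\lambda\rho}_i$ and matching the leading orders via Assumption \ref{asm: consistency} produces
$$ L^h_i = \mathcal{L}_i + \sum_{j \geq 1} h^j L^{(j)}_i, \qquad M^{h,\rho}_i = \mathcal{M}^\rho_i + \sum_{j \geq 1} h^j M^{(j),\rho}_i, $$
with $L^{(j)}_i, M^{(j),\rho}_i$ variable-coefficient differential operators of orders $j+2$ and $j+1$ respectively. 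Substituting the ansatz $v^h = \sum_{j=0}^k (h^j/j!) v^{(j)} + R^{\tau,h}$ into \eqref{eqn: implicit space-time scheme} and matching powers of $h$ identifies $v^{(0)}$ as the solution to \eqref{eqn: implicit time scheme} with initial datum $u_0$ and each $v^{(j)}$, $1 \leq j \leq k$, as the solution to a linear implicit-in-time scheme driven by $\mathcal{L}_i, \mathcal{M}^\rho_i$ with zero initial value and free terms that are linear combinations of $L^{(s)}_i v^{(j-s)}_i$ and $M^{(s),\rho}_{i-1} v^{(j-s)}_{i-1}$ for $1 \leq s \leq j$. The $\mathbf{W}^r_2(T)$-solvability of \eqref{eqn: implicit time scheme} together with induction on $j$ places $v^{(j)}$ in a Sobolev space whose index shrinks with $j$, and $m > k + p + 1 + d/2$ is precisely what keeps that index above $l + p$.

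Subtracting the formal expansion from \eqref{eqn: implicit space-time scheme} yields a scheme of the same form for $R^{\tau,h}$,
$$ R^{\tau,h}_i = R^{\tau,h}_{i-1} + (L^h_i R^{\tau,h}_i + \mathcal{F}^{\tau,h}_i)\tau + \sum_{\rho=1}^{d_1}(M^{h,\rho}_{i-1} R^{\tau,h}_{i-1} + \mathcal{G}^{\tau,h,\rho}_{i-1})\xi^\rho_i, $$
with $R^{\tau,h}_0 = 0$ and free terms $\mathcal{F}^{\tau,h}, \mathcal{G}^{\tau,h,\rho}$ consisting of the order-$(k+1)$ Taylor remainders of the operator expansions applied to the $v^{(s)}$. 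These remainders carry the factor $h^{k+1}$ by construction, and the regularity of the $v^{(s)}$ established in the previous step then gives $E\tau \sum_{i}(\|\mathcal{F}^{\tau,h}_i\|_r^2 + \|\mathcal{G}^{\tau,h}_i\|_{r+1}^2) \leq N h^{2(k+1)}\mathcal{K}_m$. Feeding this source bound into the $h$-uniform $W^r_2$ a priori estimate for the space-time scheme promised in Section \ref{sec: Auxiliary Results} then delivers the auxiliary Sobolev bound on $E\max_i\|R^{\tau,h}_i\|_r^2$ that opens the argument.

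The main obstacle is precisely this $h$-uniform $W^r_2$ a priori estimate for \eqref{eqn: implicit space-time scheme}. Since differential operators commute with $L^h_i$ and $M^{h,\rho}_i$ only up to lower-order remainders (only constant-coefficient shifts commute cleanly), a discrete energy argument at Sobolev level $r$ requires a discrete Leibniz rule to organise the commutator terms, Assumption \ref{asm: stochastic parabolicity for space-time scheme} to absorb the principal parts across the implicit Euler step, and an induction on $r$. This is the ``new contribution'' flagged in the introduction. A subsidiary point is verifying that the $L^2$-valued solution from Theorem \ref{thm: L2 valued solution to the space-time scheme}, which by Sobolev embedding may be chosen continuous in $x$, coincides on $G_h$ with the $\ell^2(G_h)$-valued solution of Theorem \ref{thm: l2 valued solution to the space-time scheme}, so that the transfer of whole-space $W^r_2$ estimates to the grid-based $\sup_{x \in G_h}$ and $|\cdot|_{\ell^2(G_h)}$ bounds is legitimate.
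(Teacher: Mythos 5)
Your plan is correct and follows essentially the same route as the paper: the coefficients $v^{(j)}$ arise from the system of implicit time-discretized equations \eqref{eqn: system of time discretized spde}, the remainder satisfies a space-time scheme whose free terms are operator Taylor remainders carrying the factor $h^{k+1}$ (Lemma \ref{lem: solvability of auxiliary equation} and Remark \ref{rmk: bound on script-O and script-R}), and the $h$-uniform Sobolev estimate of Theorem \ref{thm: Sobolev valued solution to space-time scheme and estimate} combined with the embeddings of Lemma \ref{lem: embedding}, Remark \ref{rmk: differences are bounded by derivatives}, and Sobolev's theorem transfers the whole-space bound to the grid. The two ingredients you defer---the $h$-uniform $W^{r}_{2}$ a priori estimate and the identification of the restricted $L^{2}$-valued solution with the $\ell^{2}(G_{h})$-valued one---are precisely the ones the paper supplies, so the proposal matches the paper's proof.
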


The proof of Theorems \ref{thm: expansion and estimate for the expansion error} and \ref{thm: expansion and estimate for differences of expansion error} appear in Section \ref{sec: Proof of Main Results} following the considerations in the next section. Currently we shall discuss how to implement Richardson's method to obtain higher order convergence in the spatial approximation, extending the result from \cite{GyongyKrylov:2010} to the space-time scheme.

Fix an integer $k \geq 0$ and let 
\begin{equation}
\label{eqn: v-bar} 
\bar{v}^{h} := \sum_{j=0}^{k} \beta_{j} v^{2^{-j} h}
\end{equation}
where $v^{2^{-j}h}$ solves, with $2^{-j}h$ in place of $h$, the space-time scheme \eqref{eqn: implicit space-time scheme} with initial condition $u_{0}$. Here $\beta$ is given by $ (\beta_{0}, \beta_{1}, \dots, \beta_{k}) := (1, 0, \dots, 0) V^{-1} $
where $V^{-1}$ is the inverse of the Vandermonde matrix with entries $V^{ij} := 2^{-(i-1)(j-1)}$ for $i,j \in \{1, \dots, k+1\}$. Recall that $v^{(0)}$ is the solution to \eqref{eqn: implicit time scheme} with initial condition $u_{0}$.

\begin{thm}
\label{thm: acceleration}
Under the assumptions of Theorem \ref{thm: expansion and estimate for the expansion error}, 
\begin{equation}
\label{eqn: acceleration}
E \max_{i \leq n} \sup_{x \in G_{h}} | \bar{v}^{h}_{i} (x) - v^{(0)}_{i}(x) |^2 \leq N |h|^{2(k+1)} \mathcal{K}_{m}
\end{equation}
for a constant $N$ depending only on $d$, $d_{1}$, $\Lambda$, $m$, $K_{0}$, \dots, $K_{m+1}$, $A_{0}$, \dots, $A_{m}$, $\kappa$, and $T$. 
\end{thm}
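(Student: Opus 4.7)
The plan is to combine the expansion from Theorem \ref{thm: expansion and estimate for the expansion error} with the defining property of the Vandermonde weights $\beta$. Replacing $h$ by $2^{-l}h$ in the expansion \eqref{eqn: expansion}, for each $l \in \{0,\dots,k\}$ I would write
\begin{equation*}
v^{2^{-l}h}_{i}(x) = \sum_{j=0}^{k} \frac{(2^{-l}h)^{j}}{j!} v^{(j)}_{i}(x) + R^{\tau,2^{-l}h}_{i}(x),
\end{equation*}
since the random fields $v^{(0)},\dots,v^{(k)}$ are independent of the spatial mesh parameter. Multiplying by $\beta_{l}$ and summing over $l$ gives
\begin{equation*}
\bar{v}^{h}_{i}(x) = \sum_{j=0}^{k} \frac{h^{j}}{j!} v^{(j)}_{i}(x) \Bigl( \sum_{l=0}^{k} \beta_{l}\, 2^{-lj} \Bigr) + \sum_{l=0}^{k} \beta_{l}\, R^{\tau,2^{-l}h}_{i}(x).
\end{equation*}

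The key step is to identify the inner sum. Because $V^{ij} = 2^{-(i-1)(j-1)}$ and $(\beta_{0},\dots,\beta_{k}) = (1,0,\dots,0) V^{-1}$, we have $\beta V = (1,0,\dots,0)$ row-wise, which reads $\sum_{l=0}^{k} \beta_{l}\, 2^{-lj} = \delta_{0j}$ for every $j \in \{0,\dots,k\}$. Hence all terms in the first sum collapse except the one for $j=0$, yielding $v^{(0)}_{i}(x)$, and therefore
\begin{equation*}
\bar{v}^{h}_{i}(x) - v^{(0)}_{i}(x) = \sum_{l=0}^{k} \beta_{l}\, R^{\tau,2^{-l}h}_{i}(x).
\end{equation*}

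To close the argument I would apply the elementary inequality $|\sum_{l=0}^{k} \beta_{l} a_{l}|^{2} \leq (k+1) \sum_{l=0}^{k} \beta_{l}^{2}\, |a_{l}|^{2}$ pointwise, take $\max_{i\leq n}\sup_{x \in G_{h}}$, and use expectations together with the estimate \eqref{eqn: estimate for expansion error} applied with $2^{-l}h$ in place of $h$. The minor subtlety is that \eqref{eqn: estimate for expansion error} estimates the supremum of $R^{\tau,2^{-l}h}$ on its own grid $G_{2^{-l}h}$, whereas we need the supremum on $G_{h}$. This is handled either by the inclusion $G_{h} \subset G_{2^{-l}h}$ (which holds because $G_{h}$ is the integer lattice generated by $h\Lambda$ and $2^{-l}\mathbb{Z}\Lambda \supset \mathbb{Z}\Lambda$), or, more cleanly, by the remark following Theorem \ref{thm: expansion and estimate for the expansion error} that $G_{h}$ may be replaced by $\mathbf{R}^{d}$ in the estimate. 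Since $2^{-l} \leq 1$ for $l \geq 0$, one obtains
\begin{equation*}
E \max_{i\leq n} \sup_{x \in G_{h}} |R^{\tau,2^{-l}h}_{i}(x)|^{2} \leq N (2^{-l}h)^{2(k+1)} \mathcal{K}_{m} \leq N h^{2(k+1)} \mathcal{K}_{m},
\end{equation*}
and summing over $l$ with the (fixed, $h$-independent) weights $\beta_{l}^{2}$ yields \eqref{eqn: acceleration}.

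There is no genuine obstacle here: the proof is a direct consequence of Theorem \ref{thm: expansion and estimate for the expansion error}, and the only piece of real content beyond bookkeeping is the Vandermonde annihilation of the $j \geq 1$ terms, which is precisely what motivates the choice of the coefficients $\beta_{l}$ in \eqref{eqn: v-bar}. The proof is essentially identical in spirit to the acceleration argument in \cite{GyongyKrylov:2010}, the only difference being that our remainder is controlled using the space-time scheme estimate rather than its semi-discrete counterpart.
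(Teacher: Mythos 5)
Your proposal is correct and follows essentially the same route as the paper: expand each $v^{2^{-l}h}$ via Theorem \ref{thm: expansion and estimate for the expansion error}, use the Vandermonde identities $\sum_{l}\beta_{l}=1$ and $\sum_{l}\beta_{l}2^{-lj}=0$ for $j\geq 1$ to cancel all terms but $v^{(0)}$, and bound the weighted sum of remainders by \eqref{eqn: estimate for expansion error} with $2^{-l}h$ in place of $h$. Your explicit treatment of the grid mismatch (via $G_{h}\subset G_{2^{-l}h}$ or the remark that $G_{h}$ may be replaced by $\mathbf{R}^{d}$) is a small point the paper leaves implicit, but it does not change the argument.
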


\begin{proof}
By Theorem \ref{thm: expansion and estimate for the expansion error} we have the expansion 
$$v^{2^{-j}h} = v^{(0)} + \sum_{i=1}^{k} \frac{h^{i}}{i! 2^{ij}} v^{(i)} + \hat{r}^{\tau, 2^{-j}h} h^{k+1}$$
for each $j \in \{0,1,\dots, k\}$ where $\hat{r}^{\tau,2^{-j}h} := h^{-(k+1)} R^{\tau, 2^{-j}h}$. Then 
\begin{align*}
\bar{v}^{h} 
	&= \left( \sum_{j=0}^{k} \beta_{j} \right) v^{(0)} + \sum_{j=0}^{k} \sum_{i=1}^{k} \beta_{j} \frac{h^{i}}{i!2^{ij}} v^{(i)} + \sum_{j=0}^{k} \beta_{j} \hat{r}^{\tau, 2^{-j}h} h^{k+1}\\
	&= v^{(0)} + \sum_{i=1}^{k} \frac{h^{i}}{i!} v^{(i)} \sum_{j=0}^{k} \frac{\beta_{j}}{2^{ij}} + \sum_{j=0}^{k} \beta_{j} \hat{r}^{\tau, 2^{-j}h}\\
	&= v^{(0)} + \sum_{j=0}^{k} \beta_{j} \hat{r}^{\tau, 2^{-j}h}h^{k+1}
\end{align*}
since $\sum_{j=0}^{k} \beta_{j} = 1$ and $\sum_{j=0}^{k} \beta_{j} 2^{-ij} = 0$ for each $i \in \{ 1,2, \dots, k\}$ by the definition of $(\beta_{0}, \dots, \beta_{k})$. Now using the bound on $R^{\tau,h}$ from Theorem \ref{thm: expansion and estimate for the expansion error} together with this last calculation yields the desired result. 
\end{proof}

One can also construct rapidly converging approximations of derivatives of $v^{(0)}$. That is, if the conditions of Theorem \ref{thm: expansion and estimate for the expansion error} hold instead with $$\mathfrak{m} = m > k + p + 1 + \frac{d}{2}$$ for nonnegative integers $k$ and $p$ then  Theorem \ref{thm: acceleration} holds with $\delta_{h,\lambda}\bar{v}^{h}$ and $\delta_{h,\lambda}v^{(0)}$ in place of $\bar{v}^{h}$ and $v^{(0)}$, respectively, for $\lambda \in \Lambda^{p}$. Therefore, using suitable linear combinations of finite differences of $\bar{v}^{h}$ one can construct rapidly converging approximations for the derivatives of $v^{(0)}$. 

In the next section, we present material that will be used to prove the main results in this section. In particular, we provide estimates for the $L^{2}$-valued solutions of \eqref{eqn: implicit space-time scheme} and \eqref{eqn: implicit time scheme} in appropriate Sobolev spaces.

\section{Auxiliary Results}\label{sec: Auxiliary Results}

We include the following bound, which is given for the continuous time case in \cite{GyongyKrylov:2010}, for the convenience of the reader.

\begin{lem}
\label{lem: bound on Q}
If Assumptions \ref{asm: boundedness of space-time scheme coefficients} and \ref{asm: stochastic parabolicity for space-time scheme}  hold then for all $\phi \in L^{2}$
\begin{equation*}
\begin{split}
Q_{i}(\phi) := \int_{\mathbf{R}^{d}} 2 \phi(x) L^{h}_{i}\phi(x) + \sum_{\rho=1}^{d_{1}} |M^{h,\rho}_{i} \phi(x)|^{2} \, dx \\ \leq N \|\phi\|_{0}^{2} -\frac{\kappa}{2} \sum_{\lambda \in \Lambda_{0}} \| \delta_{h,\lambda} \phi \|_{0}^{2} 
\end{split}
\end{equation*}
for all $i \in \{1, \dots, n\}$ and for a constant $N$ depending only on $\kappa$, $A_{0}$, $A_{1}$, and the cardinality of $\Lambda$.
\end{lem}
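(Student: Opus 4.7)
The plan is to combine discrete summation by parts with the symmetry $\mathfrak{a}^{\lambda\mu}_{i} = \mathfrak{a}^{\mu\lambda}_{i}$ to rewrite $Q_{i}(\phi)$ as a pointwise-nonpositive quadratic form in the discrete differences $\delta_{h,\lambda}\phi$, $\lambda \in \Lambda_{0}$, modulo lower-order errors controlled by $\|\phi\|_{0}^{2}$; Assumption \ref{asm: stochastic parabolicity for space-time scheme} then supplies the factor of $\kappa$ on the right-hand side.

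First I would split $L^{h}_{i}\phi$ and each $M^{h,\rho}_{i}\phi$ according to whether the running indices lie in $\Lambda_{0}$ or equal $0$, using the convention $\delta_{h,0}\phi = \phi$. The pieces in which at least one index vanishes are easy: the term $\int 2\mathfrak{a}^{00}_{i}\phi^{2}\, dx$ is bounded by $2A_{0}\|\phi\|_{0}^{2}$, and the first-order-like integrals such as $\int 2\phi\, \mathfrak{a}^{\lambda 0}_{i}\, \delta_{h,\lambda}\phi\, dx$ or $\int 2\mathfrak{b}^{0\rho}_{i}\mathfrak{b}^{\lambda\rho}_{i}\phi\, \delta_{h,\lambda}\phi\, dx$ are bounded by Young's inequality by $\varepsilon\|\delta_{h,\lambda}\phi\|_{0}^{2} + C(\varepsilon)\|\phi\|_{0}^{2}$, with constants depending only on $A_{0}$ and $|\Lambda|$.

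For the principal part $\sum_{\lambda,\mu \in \Lambda_{0}}\int 2\phi\, \mathfrak{a}^{\lambda\mu}_{i}\, \delta_{h,\lambda}\delta_{-h,\mu}\phi\, dx$ I would use the commutativity $\delta_{h,\lambda}\delta_{-h,\mu} = \delta_{-h,\mu}\delta_{h,\lambda}$ together with the discrete adjoint identity $\int u\, \delta_{-h,\mu}v\, dx = -\int (\delta_{h,\mu}u)\, v\, dx$, valid on $L^{2}(\mathbf{R}^{d})$ by translation invariance of Lebesgue measure, to transfer $\delta_{-h,\mu}$ onto $\phi\mathfrak{a}^{\lambda\mu}_{i}$. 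Expanding $\delta_{h,\mu}(\phi\mathfrak{a}^{\lambda\mu}_{i})(x) = \mathfrak{a}^{\lambda\mu}_{i}(x)\delta_{h,\mu}\phi(x) + \phi(x+h\mu)\delta_{h,\mu}\mathfrak{a}^{\lambda\mu}_{i}(x)$ via the discrete product rule then gives
\[
\sum_{\lambda,\mu \in \Lambda_{0}}\int 2\phi\, \mathfrak{a}^{\lambda\mu}_{i}\, \delta_{h,\lambda}\delta_{-h,\mu}\phi\, dx = -2\sum_{\lambda,\mu\in\Lambda_{0}}\int \mathfrak{a}^{\lambda\mu}_{i}\, \delta_{h,\lambda}\phi\, \delta_{h,\mu}\phi\, dx + \mathcal{E},
\]
where $\mathcal{E}$ collects commutator terms of the form $-2\int \phi(x+h\mu)\, \delta_{h,\mu}\mathfrak{a}^{\lambda\mu}_{i}\, \delta_{h,\lambda}\phi\, dx$. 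Assumption \ref{asm: boundedness of space-time scheme coefficients} (with $\bar{\mathfrak{m}} \geq 1$) together with the mean value theorem gives $|\delta_{h,\mu}\mathfrak{a}^{\lambda\mu}_{i}| \leq A_{1}|\mu|$; combined with translation invariance $\|\phi(\cdot + h\mu)\|_{0} = \|\phi\|_{0}$ and Young's inequality, this yields $|\mathcal{E}| \leq \varepsilon \sum_{\lambda\in\Lambda_{0}}\|\delta_{h,\lambda}\phi\|_{0}^{2} + C(\varepsilon)\|\phi\|_{0}^{2}$. The analogous expansion of $\sum_{\rho}|M^{h,\rho}_{i}\phi|^{2}$ produces the principal contribution $\sum_{\rho}\sum_{\lambda,\mu\in\Lambda_{0}}\mathfrak{b}^{\lambda\rho}_{i}\mathfrak{b}^{\mu\rho}_{i}\, \delta_{h,\lambda}\phi\, \delta_{h,\mu}\phi$ together with absorbable cross terms. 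Collecting everything, the principal part of $Q_{i}(\phi)$ equals
\[
-\int_{\mathbf{R}^{d}}\sum_{\lambda,\mu \in \Lambda_{0}}\Bigl(2\mathfrak{a}^{\lambda\mu}_{i} - \sum_{\rho=1}^{d_{1}}\mathfrak{b}^{\lambda\rho}_{i}\mathfrak{b}^{\mu\rho}_{i}\Bigr)\delta_{h,\lambda}\phi(x)\, \delta_{h,\mu}\phi(x)\, dx,
\]
and Assumption \ref{asm: stochastic parabolicity for space-time scheme}, applied pointwise with $z_{\lambda} = \delta_{h,\lambda}\phi(x)$, bounds this above by $-\kappa \sum_{\lambda \in \Lambda_{0}}\|\delta_{h,\lambda}\phi\|_{0}^{2}$. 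Choosing $\varepsilon = \kappa/(4|\Lambda|^{2})$ absorbs all error contributions into half of this term, yielding the stated bound.

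The main subtlety will be choosing the version of the discrete product rule that produces the \emph{unshifted} coefficient $\mathfrak{a}^{\lambda\mu}_{i}(x)$ in front of $\delta_{h,\mu}\phi$, with the shift absorbed into the $\phi$ factor of the commutator. The alternative expansion would place a shifted $\mathfrak{a}^{\lambda\mu}_{i}(x+h\mu)$ in front of $\delta_{h,\mu}\phi$, and the resulting commutator would be a bilinear form in $\delta_{h,\mu}\phi$ and $\delta_{h,\lambda}\phi$ carrying only an $|h|A_{1}$ prefactor, which is not straightforwardly absorbable into $(\kappa/2)\sum_{\lambda}\|\delta_{h,\lambda}\phi\|_{0}^{2}$ for coarse grids. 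Beyond this bookkeeping point, the computation reduces to standard discrete calculus and Young's inequality.
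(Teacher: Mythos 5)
Your proposal is correct and follows essentially the same route as the paper: discrete summation by parts (the adjoint of $\delta_{-h,\mu}$ being $-\delta_{h,\mu}$), the discrete Leibniz rule with the shift placed on $\phi$ so that the unshifted $\mathfrak{a}^{\lambda\mu}_{i}$ multiplies $\delta_{h,\lambda}\phi\,\delta_{h,\mu}\phi$, Assumption \ref{asm: stochastic parabolicity for space-time scheme} applied pointwise to the resulting principal quadratic form, and Young's inequality with the $A_{1}$ bound and shift invariance of Lebesgue measure for the commutator and zero-index terms; this is exactly the paper's decomposition of $Q$ into $Q^{(1)},\dots,Q^{(4)}$. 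The ``subtlety'' you flag about which form of the product rule to use is indeed the choice the paper makes.
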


\begin{proof}
First observe that for $\mu \in \Lambda_{0}$ the conjugate operator in $L_{2}$ to $\delta_{-h,\mu}$ is $-\delta_{h,\mu}$.
Notice also that $$\delta_{h,\mu} (\phi \psi) = \phi \delta_{h,\mu}\psi + (T_{h,\mu}\psi)\delta_{h,\mu}\phi$$ where $T_{h,\mu} \psi(x) = \psi (x+h\mu)$. Thus by simple calculations $Q = Q^{(1)} + Q^{(2)} + Q^{(3)} + Q^{(4)}$ where
$$Q^{(1)}_{i}(\phi) := - \int_{\mathbf{R}^{d}} \sum_{\lambda, \mu \in \Lambda_{0}} ((2\mathfrak{a}^{\lambda \mu}_{i} - \mathfrak{b}^{\lambda \rho}_{i} \mathfrak{b}^{\mu \rho}_{i})(\delta_{h, \lambda}\phi) \delta_{h,\mu}\phi)(x)\, dx,$$
$$Q^{(2)}_{i}(\phi) := - 2\int_{\mathbf{R}^{d}} \sum_{\lambda, \mu \in \Lambda_{0}} ((T_{h,\mu}\phi)(\delta_{h,\lambda}\phi)\delta_{h,\mu}\mathfrak{a}^{\lambda\mu}_{i})(x)\, dx,$$
$$Q^{(3)}_{i}(\phi) := 2 \int_{\mathbf{R}^{d}} (\mathfrak{a}^{00}_{i}\phi^{2}(x) + \phi(x) \sum_{\lambda \in \Lambda_{0}}(\mathfrak{a}^{\lambda 0}_{i} \delta_{h,\lambda} \phi + \mathfrak{a}^{0\lambda}_{i} \delta_{-h,\lambda} \phi)(x))\, dx,$$ and
$$Q^{(4)}_{i}(\phi) := \int_{\mathbf{R}^{d}} (\mathfrak{b}^{00}_{i} \phi^{2}(x) + 2 \sum_{\lambda \in \Lambda_{0}} \mathfrak{b}^{\lambda \rho}_{i} \mathfrak{b}^{0 \rho}_{i} \phi \delta_{h,\lambda} \phi(x))\, dx.$$

By Assumption \ref{asm: stochastic parabolicity for space-time scheme}, 
$$Q^{(1)}_{i}(\phi) \leq -\kappa \sum_{\lambda \in \Lambda_{0}} \| \delta_{h,\lambda}\phi\|_{0}^{2}$$ and by Assumption \ref{asm: boundedness of space-time scheme coefficients}, Young's inequality, and the shift invariance of Lebesgue measure, 
$$Q^{(j)}_{i}(\phi) \leq \frac{\kappa}{6} \sum_{\lambda \in \Lambda_{0}} \| \delta_{h,\lambda}\phi\|_{0}^{2} + N \| \phi \|_{0}^{2}$$ for each $j \in \{2,3,4\}$ with a constant $N$ depending only on the cardinality of $\Lambda$, $\kappa$, $A_{0}$ and, for $j=2$, also on $A_{1}$. 
\end{proof}

We also recall the following discrete Gronwall lemma. Note that we use the convention that summation over an empty set is zero.

\begin{lem}
\label{lem: discrete Gronwall lemma}
For constants $K \in (0, 1)$ and $C$, if $(a_{i})_{i=0}^{n}$ is a nonnegative sequence such that $a_{j}\leq C + K \sum_{i=1}^{j} a_{i}$ holds for each $j \in \{0, \dots, n\}$ then $a_{j} \leq C (1-K)^{-j}$ for $j \in \{0, \dots, n\}$. 
\end{lem}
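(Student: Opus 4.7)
The plan is to prove the bound by induction on $j \in \{0, 1, \dots, n\}$. The base case $j = 0$ follows immediately from the empty-sum convention flagged just before the statement: the hypothesis reduces to $a_{0} \leq C$, which matches $C(1-K)^{-0} = C$.

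For the inductive step at index $j$, the first move is to isolate $a_{j}$ from the sum on the right-hand side of the hypothesis, writing $a_{j} \leq C + K\sum_{i=1}^{j-1} a_{i} + K a_{j}$ and using $K \in (0,1)$ to obtain $a_{j} \leq (1-K)^{-1}\bigl(C + K\sum_{i=1}^{j-1} a_{i}\bigr)$. Here the assumption $K < 1$ is essential, since otherwise one cannot absorb the $Ka_{j}$ term. I then substitute the induction hypothesis $a_{i} \leq C(1-K)^{-i}$ into the remaining finite sum.

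The only computation is a geometric-series collapse: using $K(1-K)^{-1} = (1-K)^{-1} - 1$, one sees that $1 + K\sum_{i=1}^{j-1}(1-K)^{-i} = (1-K)^{-(j-1)}$, so that after multiplying by the prefactor $C(1-K)^{-1}$ the right-hand side becomes exactly $C(1-K)^{-j}$, closing the induction. I do not anticipate any substantive obstacle—this is the standard discrete Gronwall argument, and the only points requiring care are the empty-sum convention (needed so the base case is meaningful) and the strict inequality $K < 1$ (needed to solve the implicit estimate for $a_{j}$).
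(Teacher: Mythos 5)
Your proof is correct and is essentially the paper's argument: both proceed by induction, using $K<1$ to absorb the $Ka_{j}$ term after isolating $a_{j}$. The only cosmetic difference is that the paper compares $a_{j}$ with the majorant sequence $b_{j}=C+K\sum_{i=1}^{j}b_{i}$ satisfying $(1-K)b_{j}=b_{j-1}$, whereas you collapse the geometric sum $1+K\sum_{i=1}^{j-1}(1-K)^{-i}=(1-K)^{-(j-1)}$ explicitly.
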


\begin{proof}
Let $b_{j} = C + K \sum_{i=1}^{j}b_{i}$ and note that $(1-K)b_{j} = b_{j-1}$. Then $a_{j} \leq b_{j}$ for $j \leq n$ by induction, since $a_{0} \leq C = b_{0}$ and $$a_{j}(1-K) \leq C + \sum_{i=1}^{j-1}a_{i} \leq C + \sum_{i=1}^{j-1}b_{i} = b_{j} (1-K),$$ assuming $a_{j-1} \leq b_{j-1}$. Therefore $a_{j} \leq b_{j} = C (1-K)^{-j}$ for each $j \leq n$ and for $K \in (0,1)$.
\end{proof}

The following provides a Sobolev space estimate for solutions to the space-time scheme that is independent of $h$. For an integer $m \geq 0$, denote by $\mathbf{W}^{m}_{2}(\tau)$ the space of $W^{m}_{2}$-valued predictable processes $\phi$ on $\Omega \times T_{\tau}$ such that $$ [\![ \phi ]\!]_{m} := E \tau \sum_{i=1}^{n} \| \phi_{i} \|_{m}^{2} < \infty$$ 
and note that we write $$[\![ g ]\!]_{m} = E \tau \sum_{i=1}^{n} \sum_{\rho=1}^{d_{1}} \| g^{\rho}_{i} \|_{m}^{2}$$ for functions $g = (g^{\rho})_{\rho=1}^{d_{1}}$.

\begin{thm}
\label{thm: Sobolev valued solution to space-time scheme and estimate}
For $\mu \in \Lambda$ and $\rho \in \{1, \dots, d_{1}\}$, let $f^{\mu}, g^{\rho} \in \mathbf{W}_{2}^{\mathfrak{m}}(\tau)$. If Assumption \ref{asm: boundedness of space-time scheme coefficients} holds then for each nonzero $h$ there exists a unique solution $\nu \in \mathbf{W}_{2}^{\mathfrak{m}}(\tau)$ of 
\begin{equation}
\label{eqn: modified space-time scheme}
\nu_{i} = \nu_{i-1} + \sum_{\mu \in \Lambda} (L^{h}_{i}\nu_{i} + f^{\mu}_{i}) \tau + \sum_{\rho=1}^{d_{1}}( M^{h,\rho}_{i-1} \nu_{i-1} + g^{\rho}_{i-1}) \xi^{\rho}_{i}
\end{equation}
for any $W_{2}^{\mathfrak{m}+1}$-valued $\mathcal{F}_{0}$-measurable initial condition $\nu_{0}$. Further, if Assumption \ref{asm: stochastic parabolicity for space-time scheme} is also satisfied then
\begin{equation}
\label{eqn: modified space-time scheme estimate}
\begin{split}
E \max_{i \leq n} \| \nu_{i} \|_{\mathfrak{m}}^{2} + E \tau \sum_{i = 1}^{n} \sum_{\lambda \in \Lambda} \| \delta_{h,\lambda} \nu_{i} \|_{\mathfrak{m}}^{2} \leq N E \tau \|\nu_{0}\|_{\mathfrak{m}+1}^{2} \\ + N E \tau \sum_{i=0}^{n} ( \| f_{i} \|_{\mathfrak{m}}^{2} + \| g_{i} \|_{\mathfrak{m}}^{2} )
\end{split}
\end{equation}
holds for a constant $N$ that depends only on $d$, $d_{1}$, $\mathfrak{m}$, $\Lambda$, $A_{0}$, \dots, $A_{\bar{\mathfrak{m}}}$, $\kappa$, and $T$.
\end{thm}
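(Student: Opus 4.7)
The plan is to dispose of existence first by strengthening the argument of Theorem \ref{thm: L2 valued solution to the space-time scheme}. Under Assumption \ref{asm: boundedness of space-time scheme coefficients}, the operator $L^h_i$ maps $W_2^{\mathfrak{m}}$ boundedly into itself for each fixed $h$, so for $\tau$ small enough $I - \tau L^h_i$ is invertible on $W_2^{\mathfrak{m}}$; the solution $\nu_i$ can then be built inductively from $\nu_{i-1}$ and the $\mathcal{F}_{i-1}$-measurable data by $\nu_i = (I - \tau L^h_i)^{-1}[\nu_{i-1} + \tau F_i + \sum_\rho (M^{h,\rho}_{i-1}\nu_{i-1} + g^\rho_{i-1})\xi^\rho_i]$, and adaptedness together with $\mathbf{W}_2^{\mathfrak{m}}(\tau)$-membership are inherited from the data.

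The heart of the proof is a discrete energy identity. Writing $\Delta\nu_i := \nu_i - \nu_{i-1}$, $F_i := \sum_\mu f^\mu_i$, and $B_{i-1}\xi_i := \sum_\rho(M^{h,\rho}_{i-1}\nu_{i-1}+g^\rho_{i-1})\xi^\rho_i$, the polarization $\|\nu_i\|_0^2 - \|\nu_{i-1}\|_0^2 = 2(\nu_i, \Delta\nu_i) - \|\Delta\nu_i\|_0^2$ combined with \eqref{eqn: modified space-time scheme} yields, after a brief cancellation,
\begin{equation*}
\|\nu_i\|_0^2 - \|\nu_{i-1}\|_0^2 = 2\tau(L^h_i\nu_i + F_i, \nu_i) + 2(B_{i-1}\xi_i, \nu_{i-1}) + \|B_{i-1}\xi_i\|_0^2 - \tau^2\|L^h_i\nu_i + F_i\|_0^2.
\end{equation*}
Conditional expectation over $\mathcal{F}_{i-1}$ kills the martingale term and replaces $\|B_{i-1}\xi_i\|_0^2$ by $\tau\sum_\rho \|M^{h,\rho}_{i-1}\nu_{i-1}+g^\rho_{i-1}\|_0^2$. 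I would then apply Lemma \ref{lem: bound on Q} to bound $2\tau(L^h_i\nu_i, \nu_i)$, \emph{retaining} the resulting $-\tau\sum_\rho \|M^{h,\rho}_i\nu_i\|_0^2$. On summing over $i=1,\ldots,j$ this term telescopes against its $(i-1)$-shifted counterpart from the noise contribution, leaving only a boundary term at $i=0$ (controlled by $\|\nu_0\|_{\mathfrak{m}+1}^2$) and a nonpositive one at $i=j$ that is discarded. Young's inequality handles the $(F_i,\nu_i)$ and $g^\rho_{i-1}$ pieces, the nonpositive $-\tau^2\|\cdot\|_0^2$ is dropped, and Lemma \ref{lem: discrete Gronwall lemma} (applicable since $N\tau < 1$ for $\tau$ small) closes the $\mathfrak{m}=0$ estimate for both $E\|\nu_j\|_0^2$ and $E\tau\sum_{i\leq j}\sum_\lambda\|\delta_{h,\lambda}\nu_i\|_0^2$.

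For $\mathfrak{m}\geq 1$, I would apply $D^l$ with $|l|\leq \mathfrak{m}$ to \eqref{eqn: modified space-time scheme}; each $D^l\nu$ satisfies the same scheme with the principal operators $L^h_i, M^{h,\rho}_{i-1}$ plus source terms given by the Leibniz commutators $[D^l, L^h_i]\nu_i$ and $[D^l, M^{h,\rho}_{i-1}]\nu_{i-1}$. Assumption \ref{asm: boundedness of space-time scheme coefficients} is calibrated so these commutators are bounded by $A_0,\ldots,A_{\bar{\mathfrak{m}}}$ times first-order differences of $D^j\nu$ for $j\leq \mathfrak{m}$, which can be absorbed into the positive left-side contribution $(\kappa/2)\tau\sum_\lambda \|\delta_{h,\lambda}D^l\nu_i\|_0^2$ via Young's inequality. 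Summing over $|l|\leq \mathfrak{m}$ and reapplying Lemma \ref{lem: discrete Gronwall lemma} yields \eqref{eqn: modified space-time scheme estimate} without the maximum.

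The $\max_{i\leq n}$ inside the expectation is handled by postponing expectation until after summation: the sum $\sum_{i\leq j} 2(B_{i-1}\xi_i, D^l\nu_{i-1})$ is a martingale in $j$, and the Burkholder--Davis--Gundy inequality followed by a Cauchy--Schwarz split produces a term $\tfrac12 E\max_i\|\nu_i\|_\mathfrak{m}^2$ that is absorbed on the left. The main obstacle is the Sobolev-level commutator bookkeeping: one must verify that the split regularity requirements imposed by Assumption \ref{asm: boundedness of space-time scheme coefficients}---namely $\bar{\mathfrak{m}}$ derivatives on $\mathfrak{a}^{\lambda\mu}$ for $\lambda,\mu\in\Lambda_0$ but only $\mathfrak{m}$ derivatives on the mixed coefficients $\mathfrak{a}^{0\nu}, \mathfrak{a}^{\nu 0}, \mathfrak{b}^\nu$---exactly match the Leibniz distribution of derivatives between coefficient and argument arising in $D^\mathfrak{m} L^h_i\nu_i$ and $D^\mathfrak{m} M^{h,\rho}_{i-1}\nu_{i-1}$, while the hypothesis $\nu_0 \in W_2^{\mathfrak{m}+1}$ is precisely what keeps the telescoping boundary term $\tau\sum_\rho\|M^{h,\rho}_0\nu_0\|_\mathfrak{m}^2$ finite uniformly in $h$.
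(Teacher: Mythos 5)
Your existence step, the polarization identity, the use of Lemma \ref{lem: bound on Q} with the $\|M^{h,\rho}_i\nu_i\|_0^2$ terms retained and index-shifted against the noise contribution (boundary term at $i=0$ absorbed by $\tau\|\nu_0\|_{\mathfrak m+1}^2$), the discrete Gronwall step, and the induction on derivatives with commutator terms absorbed into the $\delta_{h,\lambda}$-terms by Young's inequality all coincide with the paper's proof. The gap is in the passage to $E\max_{i\le n}\|\nu_i\|_{\mathfrak m}^2$. Your telescoping cancellation requires replacing $\|\sum_\rho(M^{h,\rho}_{i-1}\nu_{i-1}+g^{\rho}_{i-1})\xi^{\rho}_i\|_0^2$ by its compensator $\tau\sum_\rho\|M^{h,\rho}_{i-1}\nu_{i-1}+g^{\rho}_{i-1}\|_0^2$, which is valid only under the expectation at a fixed $j$; but to treat the maximum you then ``postpone expectation until after summation,'' at which point the quadratic term $\mathcal J_j=\sum_{i\le j}\|\sum_\rho(M^{h,\rho}_{i-1}\nu_{i-1}+g^{\rho}_{i-1})\xi^{\rho}_i\|_0^2$ appears pathwise, and its fluctuation around the compensator is a martingale that your BDG step --- applied only to the linear term $\sum_{i\le j}2(B_{i-1}\xi_i,\nu_{i-1})$ --- never touches. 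You cannot invoke both the conditional-expectation replacement and pathwise maximization. This is exactly where the paper does its most technical work: it writes $\xi^{\pi}_i\xi^{\rho}_i=Y^{\pi\rho}_i-Y^{\pi\rho}_{i-1}+\tau\delta_{\pi\rho}$ via It\^o's formula, so that the splitting $\mathcal J=\mathcal J^{(1)}+\mathcal J^{(2)}$ is pathwise, and then bounds $E\max_j|\mathcal J^{(2)}_j|$ with the Davis inequality, Young's inequality and the tower property.

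The omission is repairable inside your framework, in fact more cheaply than in the paper: since every summand of $\mathcal J_j$ is nonnegative, $\max_{j\le n}\mathcal J_j=\mathcal J_n$, and $E\mathcal J_n$ equals the expectation of the compensator, which is bounded by $C\,E\tau\sum_{i}(\sum_{\lambda}\|\delta_{h,\lambda}\nu_i\|_0^2+\|\nu_i\|_0^2+\|g_i\|_0^2)$ plus a boundary term in $\nu_0$; all of these are already controlled by the fixed-$j$ estimates you establish first, and $E\max_j\mathcal H_j$ is likewise handled through the pathwise bound $2(\phi,L^h_i\phi)\le Q_i(\phi)\le N\|\phi\|_0^2$. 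But as written, the plan does not account for the compensated-quadratic martingale, and some explicit estimate of it (the paper's $\mathcal J^{(2)}$ argument or the monotonicity observation above) must be supplied before the $E\max$ half of \eqref{eqn: modified space-time scheme estimate} is proved.
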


\begin{proof}
By Theorem \ref{thm: L2 valued solution to the space-time scheme}, the existence of a unique sequence of $L^{2}$-valued random variables solving \eqref{eqn: implicit space-time scheme} is known. For $f^{\mu}, g^{\rho} \in \mathbf{W}^{\mathfrak{m}}_{2}(\tau)$ and an $W^{\mathfrak{m}+1}_{2}$-valued initial condition $\nu_{0}$, there exists a unique sequence of $W^{\mathfrak{m}}_{2}$-valued random variables satisfying \eqref{eqn: modified space-time scheme}. The estimate \eqref{eqn: modified space-time scheme estimate} can be achieved easily with a constant $N$ depending on $h$, so in particular the solution is in $\mathbf{W}^{\mathfrak{m}}_{2}(\tau)$. 

Next we prove the estimate \eqref{eqn: modified space-time scheme estimate} for a constant independent of $h$. For convenience we denote $\mathfrak{K}_{\mathfrak{m}}^{n} := \tau \sum_{i=0}^{n} ( \| f_{i} \|_{\mathfrak{m}}^{2} + \| g_{i} \|_{\mathfrak{m}}^{2} ).$ Considering equalities of the from $a^{2} + b^{2} = 2a(a-b) - |a-b|^{2}$ we note that \eqref{eqn: modified space-time scheme} implies
\begin{align*}
\|\nu_{i}\|_{0}^{2} - \|\nu_{i-1} \|_{0}^{2} &= 2 (\nu_{i}, \nu_{i} - \nu_{i-1}) - \|\nu_{i} - \nu_{i-1}\|_{0}^{2}\\
	& = 2(\nu_{i}, L_{i}^{h}\nu_{i} + f^{\mu}_{i})\tau + 2(\nu_{i-1}, M^{h,\rho}_{i-1}\nu_{i-1} + g^{\rho}_{i-1})\xi^{\rho}_{i}\\
	& \qquad + 2(\nu_{i} - \nu_{i-1},M^{h,\rho}_{i-1}\nu_{i-1} + g^{\rho}_{i-1})\xi^{\rho}_{i} - \|\nu_{i} - \nu_{i-1} \|_{0}^{2}\\
	&= 2(\nu_{i}, L^{h}_{i}\nu_{i} + f^{\mu}_{i})\tau + 2(\nu_{i-1}, M^{h,\rho}_{i-1}\nu_{i-1} + g^{\rho}_{i-1})\xi^{\rho}_{i}\\
	& \qquad + \|(M^{h,\rho}_{i-1}\nu_{i-1} + g^{\rho}_{i-1})\xi^{\rho}_{i}\|_{0}^{2} - \|L^{h}_{i} \nu_{i} + f^{\mu}_{i}\|_{0}^{2} \tau^{2}
\end{align*} 
where here and in what follows we suppress the sums over $\mu \in \Lambda_{0}$ and $\rho \in \{1, \dots, d_{1}\}$. Summing up over $i$, we have 
\begin{equation}
\label{eqn: intermediate equation with H, I, J}
\| \nu_{j} \|_{0}^{2} \leq \| \nu_{0}\|_{0}^{2} + \mathcal{H}_{j} + \mathcal{I}_{j} + \mathcal{J}_{j}
\end{equation}
where 
\begin{equation*}
\mathcal{H}_{j} := \sum_{i=1}^{j} 2(\nu_{i}, L^{h}_{i} \nu_{i} + f^{\mu}_{i} )\tau,
\end{equation*}
\begin{equation*}
\mathcal{I}_{j} := \sum_{i=1}^{j} 2(\nu_{i-1}, M^{h,\rho}_{i-1} \nu_{i-1} + g^{\rho}_{i-1} )\xi^{\rho}_{i},
\end{equation*}
and 
\begin{equation*}
\mathcal{J}_{j} := \sum_{i=1}^{j} \| (M^{h,\rho}_{i-1} \nu_{i-1} + g^{\rho}_{i-1}) \xi^{\rho}_{i} \|_{0}^{2}.
\end{equation*}

By an application of It{\^o}'s formula, it is easy to see that for $\pi,\rho \in \{1, \dots, d_{1}\}$
\begin{equation*}
\xi^{\pi}_{i+1}\xi^{\rho}_{i+1} = (\Delta w^{\pi} (t_{i})) (\Delta w^{\rho}(t_{i})) = Y^{\pi \rho}_{i+1} - Y^{\pi \rho}_{i} + \tau \delta_{\pi \rho}
\end{equation*}
for all $i \in \{1, \dots, n\}$ where 
$$Y^{\pi \rho}(t) := \int_{0}^{t} (w^{\pi}(s) - w^{\pi}_{\gamma(s)})\, dw^{\rho}(s) + \int_{0}^{t} (w^{\rho}(s) - w^{\rho}_{\gamma (s)}) \, dw^{\pi}(s),$$ 
$\gamma (s)$ is the piecewise defined function taking value $\gamma(s) = i$ for $s \in [i\tau,(i+1)\tau)$, and $\delta_{\pi\rho} = 1$ when $\pi = \rho$ and $0$ otherwise. Thus can write $\mathcal{J}_{j} = \mathcal{J}^{(1)}_{j} + \mathcal{J}^{(2)}_{j}$ where
\begin{equation*}
\mathcal{J}^{(1)}_{j} := \sum_{i=1}^{j} \sum_{\rho=1}^{d_{1}} \| M^{h,\rho}_{i-1} \nu_{i-1} + g^{\rho}_{i-1} \|_{0}^{2} \tau
\end{equation*}
and
\begin{equation*}
\mathcal{J}^{(2)}_{j} := \int_{0}^{t_{j}} \sum_{\pi,\rho = 1}^{d_{1}} (M^{h,\pi}_{\gamma (s)} \nu_{\gamma (s)} + g^{\pi}_{\gamma (s)}, M^{h,\rho}_{\gamma (s)} \nu_{\gamma (s)} + g^{\rho}_{\gamma (s)}) \, d Y^{\pi \rho}(s).
\end{equation*}
Then we note that since Lemma \ref{lem: bound on Q} holds for all $t \in [0,T]$, in particular
\begin{align*}
\mathcal{H}_{j} + \mathcal{J}^{(1)}_{j} 
	&\leq \tau \sum_{\lambda \in \Lambda} \| \delta_{h,\lambda} \nu_{0}\|_{0}^{2} + \tau \sum_{i=1}^{j} \left( Q_{i} (\nu_{i}) + (\nu_{i}, f^{\mu}_{i}) + \frac{\tau}{\varepsilon} \|g_{i-1}\|_{0}^{2}\right) \\
	&\leq C \tau \| \nu_{0}\|_{1}^{2} + N \tau \sum_{i=1}^{j} \|\nu_{i}\|_{0}^{2} - \frac{\kappa}{2} \tau \sum_{i=1}^{j} \sum_{\lambda \in \Lambda} \|\delta_{h,\lambda} \nu_{i}\|_{0}^{2} + N \mathfrak{K}_{0}^{j}
\end{align*}
where here $\| \nu_{0}\|_{0}^{2} \leq C \| \nu_{0} \|_{1}^{2}$ and $N$ is a positive constant depending only on $\kappa$, $A_{0}$, $A_{1}$, and the cardinality of $\Lambda$. Thus we can replace equation \eqref{eqn: intermediate equation with H, I, J} by 
\begin{equation}
\label{eqn: intermediate equation after applying Q bound}
\begin{split}
\| \nu_{j} \|_{0}^{2} + \tau \sum_{i=1}^{j} \sum_{\lambda \in \Lambda} \|\delta_{h,\lambda} \nu_{i}\|_{0}^{2} \leq N \tau \|\nu_{0}\|_{1}^{2} + N \tau \sum_{i=1}^{j} \|\nu_{i}\|_{0}^{2} \\ + N \mathfrak{K}_{0}^{j} + \mathcal{I}_{j} + \mathcal{J}^{(2)}_{j} \end{split}
\end{equation}
for a constant $N$ that depends only on $\kappa$, $A_{0}$, $A_{1}$, $C$, and the cardinality of $\Lambda$.

Next we observe that 
$$ E \mathcal{I}_{j} = \sum_{i=1}^{j} 2 \int_{\mathbf{R}^{d}} E \left( E \left( \nu_{i-1} (M^{h,\rho}_{i-1} \nu_{i-1} + g^{\rho}_{i-1}) \xi^{\rho}_{i} \,|\, \mathcal{F}_{i-1} \right) \right) \, dx = 0$$ 
since $\xi^{\rho}_{i+1}$ is independent of $\mathcal{F}_{i}$ and $\nu_{i}$, $M^{h,\rho}_{i} \nu_{i}$, and $g^{\rho}_{i}$ are all $\mathcal{F}_{i}$-measurable for $i \in \{0, \dots, n\}$. Similarly, we see that $E \mathcal{J}^{(2)}_{j} = 0$ since the expectation of the stochastic integral is zero. Therefore, taking the expectation of \eqref{eqn: intermediate equation after applying Q bound} we have that
\begin{equation}
\label{eqn: intermediate equation after taking the expectation and maximum}
\begin{split}
E \| \nu_{j} \|_{0}^{2}  + E \tau \sum_{i=1}^{n} \sum_{\lambda \in \Lambda} \|\delta_{h,\lambda} \nu_{i}\|_{0}^{2} \leq N E \tau \|\nu_{0}\|_{1}^{2} + N E \mathfrak{K}_{0}^{n} \\ + N E \tau \sum_{i=1}^{j} \| \nu_{i} \|_{0}^{2}
\end{split}
\end{equation}
for each $j \in \{1, \dots, n\}$. Excluding for the time being the difference term on the left hand side of \eqref{eqn: intermediate equation after taking the expectation and maximum} and applying Lemma \ref{lem: discrete Gronwall lemma} we obtain
$$ E \| \nu_{j}\|_{0}^{2} \leq N (E \tau \| \nu_{0}\|_{1}^{2} + E \mathfrak{K}_{0}^{n}) (1 - N \tau)^{-j}$$
and, since $(1 - N\tau)^{-j} = (1- N \frac{T}{n})^{-j} \leq (1 - N \frac{T}{n})^{-n} \leq C^{\prime} e^{NT}$, we have 
\begin{equation}
\label{eqn: intermediate bound on max E v after applying discrete Gronwall lemma}
\max_{i \leq n} E \| \nu_{i}\|_{0}^{2} \leq N E \tau \| \nu_{0} \|_{1}^{2} + N E \mathfrak{K}_{0}^{n}
\end{equation}
for a constant $N$ here that depends only on the parameters $\kappa$, $A_{0}$, $A_{1}$, $T$, and the cardinality of $\Lambda$. Using equation \eqref{eqn: intermediate bound on max E v after applying discrete Gronwall lemma} we can eliminate the last term on the right hand side of \eqref{eqn: intermediate equation after taking the expectation and maximum}. In particular, we have 
\begin{equation*}
E \tau \sum_{i=1}^{n} \sum_{\lambda \in \Lambda} \| \delta_{h,\lambda} \nu_{i} \|_{0}^{2} \leq N E \tau \| \nu_{0} \|_{1}^{2} + N E \mathfrak{K}_{0}^{n}.
\end{equation*}

Using the Davis inequality we can bound $\max |\mathcal{J}^{(2)}|$ and $\max |\mathcal{I}|$. Namely,
\begin{align*}
E &\max_{i \leq n} |\mathcal{J}^{(2)}_{j}| \\
	&\leq 3 \sum_{\pi,\rho=1}^{d_{1}} E \left\{ \int_{0}^{T} \|M^{h,\rho}_{\gamma (s)} \nu_{\gamma (s)} + g^{\rho}_{\gamma (s)}\|_{0}^{2} \|M^{h,\pi}_{\gamma (s)}\nu_{\gamma (s)} + g^{\pi}_{\gamma (s)}\|_{0}^{2} \, d \langle Y^{\pi\rho}\rangle(s)\right\}^{1/2}\\
	&\leq C \sum_{\pi,\rho=1}^{d_{1}} E \left\{ \int_{0}^{T} \|M^{h,\rho}_{\gamma (s)} \nu_{\gamma (s)} + g^{\rho}_{\gamma (s)}\|_{0}^{4} |w^{\pi}(s) - w^{\pi}_{\gamma (s)}|^{2} \, ds \right\}^{1/2}\\
	&\leq C \sum_{\pi,\rho=1}^{d_{1}} E \left(\max_{i \leq n} \sqrt{\tau} \|M^{h,\rho}_{i} \nu_{i} + g^{\rho}_{i}\|_{0}\right. \\
		&\qquad \left. \times \left\{ \frac{1}{\tau} \int_{0}^{T} \|M^{h,\rho}_{\gamma (s)} \nu_{\gamma (s)} + g^{\rho}_{\gamma (s)}\|_{0}^{2} |w^{\pi}(s) - w^{\pi}_{\gamma (s)} |^{2}\, ds \right\}^{1/2}\right)
\end{align*}
where $C$ is a constant independent of $\tau$ and $h$ that is allowed to change from one instance to the next. Therefore,
\begin{equation}
\label{eqn: intermediate estimate for E max J-2}
\begin{split}
E \max_{i \leq n} | \mathcal{J}^{(2)}_{i} | \leq d_{1} C \sum_{\rho=1}^{d_{1}} \tau E \max_{i \leq n} \| M^{h,\rho}_{i}\nu_{i} + g^{\rho}_{i}\|_{0}^{2} \\ + \frac{C}{\tau} \sum_{\pi,\rho=1}^{d_{1}} E \int_{0}^{T} \| M^{h,\rho}_{\gamma (s)} \nu_{\gamma (s)} + g^{\rho}_{\gamma (s)}\|_{0}^{2} |w^{\pi}(s) - w^{\pi}_{\gamma (s)}|^{2} \, ds
\end{split}
\end{equation}
by Young's inequality. The first term on the right hand side of \eqref{eqn: intermediate estimate for E max J-2} is bounded from above by the sum over all $i \in \{1, \dots, n\}$, hence
\begin{align*}
\sum_{\rho=1}^{d_{1}} \tau E \max_{i \leq n} \| M^{h,\rho}_{i} \nu_{i} + g^{\rho}_{i} \|_{0}^{2} &\leq E \tau \sum_{i=0}^{n} \| M^{h,\rho}_{i} \nu_{i} + g^{\rho}_{i} \|_{0}^{2}\\
	&\leq C E \tau \sum_{i=0}^{n} ( \sum_{\lambda \in \Lambda} \| \delta_{h,\lambda} \nu_{i} \|_{0}^{2} + \| g_{i} \|_{0}^{2} ),
\end{align*}
and the second term on the right hand side of \eqref{eqn: intermediate estimate for E max J-2} yields
\begin{align*}
\frac{1}{\tau}& \sum_{\pi, \rho=1}^{d_{1}} E \int_{0}^{T} \| M^{h,\rho}_{\gamma (s)} \nu_{\gamma (s)} + g^{\rho}_{\gamma (s)} \|_{0}^{2} | w^{\pi}(s) - w^{\pi}_{\gamma (s)}|^{2} \, ds \\
	&\leq \frac{1}{\tau} \sum_{\pi, \rho=1}^{d_{1}} E \left( E \left( \int_{0}^{T} \| M^{h,\rho}_{\gamma (s)} \nu_{\gamma (s)} + g^{\rho}_{\gamma (s)} \|_{0}^{2} | w^{\pi}(s) - w^{\pi}_{\gamma (s)}|^{2} \, ds \,|\, \mathcal{F}_{\gamma (s)} \right) \right)\\
	&\leq E \tau \sum_{i=0}^{n} ( \sum_{\lambda \in \Lambda} \| \delta_{h,\lambda}\nu_{i} \|_{0}^{2} + \| g_{i} \|_{0}^{2} )
\end{align*}
by the tower property for conditional expectations. Combining these estimates we see that $E \max | \mathcal{J}^{(2)} |$ is estimated by terms already appearing on the right hand side of \eqref{eqn: intermediate equation after applying Q bound} for a constant $N$ that depends also on $d_{1}$. 

Similarly, we note that
\begin{align*}
\mathcal{I}_{j} &= \sum_{i=1}^{j} 2 (\nu_{i-1}, M^{h,\rho}_{i-1} \nu_{i-1} + g^{\rho}_{i-1} ) \Delta w^{\rho}_{i-1} \\
	&= 2 \int_{0}^{t_{j}} (\nu_{\gamma (s)}, M^{h,\rho}_{\gamma (s)} \nu_{\gamma (s)} + g^{\rho}_{\gamma (s)}) \, dw^{\rho}(s).
\end{align*}
Applying the Davis inequality
\begin{align*}
E \max_{i \leq n} | \mathcal{I}_{i} | 
	&\leq 6 \sum_{\rho=1}^{d_{1}} E \left\{ \int_{0}^{T} \| \nu_{\gamma (s)} \|_{0}^{2} \| M^{h,\rho}_{\gamma (s)} \nu_{\gamma (s)} + g^{\rho}_{\gamma (s)} \|_{0}^{2} \, ds \right\}^{1/2}\\
	&\leq 6 \sum_{\rho=1}^{d_{1}} E \left( \max_{i \leq n}\|\nu_{i}\|_{0} \left\{ \int_{0}^{T} \| M^{h,\rho}_{\gamma (s)}\nu_{\gamma (s)} + g^{\rho}_{\gamma (s)} \|_{0}^{2} \, ds \right\}^{1/2}\right)
\end{align*}
and then Young's inequality
\begin{equation}
\label{eqn: estimate for E max I}
E \max_{i \leq n} | \mathcal{I}_{i} | \leq \frac{1}{2} E \max_{i \leq n}\| \nu_{i} \|_{0}^{2} + C E \tau \sum_{i= 0}^{n} (\sum_{\lambda \in \Lambda} \| \delta_{h,\lambda} \nu_{i} \|_{0}^{2} + \|g_{i} \|_{0}^{2})
\end{equation}
we see that $E \max | \mathcal{I} |$ is also estimated by terms already appearing on the right and side of \eqref{eqn: intermediate equation after applying Q bound}. 

Returning to \eqref{eqn: intermediate equation after applying Q bound} and taking the maximum followed by the expectation we have 
\begin{equation*}
E \max_{i \leq n} \| \nu_{i} \|_{0}^{2} + E \tau \sum_{i=1}^{n} \sum_{\lambda \in \Lambda} \| \delta_{h,\lambda} \nu_{i} \|_{0}^{2} \leq N E \tau \| \nu_{0} \|_{1}^{2} + N E \mathfrak{K}_{0}^{n},
\end{equation*}
using \eqref{eqn: intermediate bound on max E v after applying discrete Gronwall lemma} and the estimates on $E \max | \mathcal{J}^{(2)}|$ and $E \max | \mathcal{I}|$. Thus \eqref{eqn: modified space-time scheme estimate} holds when $\mathfrak{m}=0$. 

If $\mathfrak{m} \geq 1$ we differentiate \eqref{eqn: modified space-time scheme} with respect to $x^{l}$ and introduce the notation $\tilde{\phi}$ for the derivative of a function $\phi$ in the direction $x^{l}$ for $l \in \{1, \dots, d\}$. Then \eqref{eqn: modified space-time scheme} becomes
\begin{equation}
\label{eqn: modified space time discretized Cauchy problem with one derivative in x-i}
\begin{split}
\tilde{\nu}_{i} = \tilde{\nu}_{i-1} + \sum_{\lambda, \mu \in \Lambda} (\mathfrak{a}^{\lambda \mu}_{i}\delta_{h,\lambda}\delta_{-h,\mu}\tilde{\nu}_{i} + \hat{f}^{\mu}_{i}) \tau \\ + \sum_{\rho=1}^{d_{1}} \sum_{\lambda \in \Lambda}( \mathfrak{b}^{\lambda \rho}_{i-1} \delta_{h,\lambda}\tilde{\nu}_{i-1} + \hat{g}^{\rho}_{i-1} ) \xi^{\rho}_{i}
\end{split}
\end{equation}
where $\hat{f}^{\mu} := \tilde{f}^{\mu}$ for nonzero $\mu$, $\hat{f}^{0} := \tilde{f}^{0} + \tilde{\mathfrak{a}}^{\lambda\mu} \delta_{h,\lambda}\delta_{-h,\mu} \nu$ and $\hat{g}^{\rho} := \tilde{g}^{\rho} + \tilde{\mathfrak{b}}^{\lambda\rho} \delta_{h,\lambda} \nu$. Recalling that $\partial_{\mu} = \mu^{l} D_{l}$ for $\mu \in \Lambda_{0}$, we proceed as before but now using the inequality
\begin{align*}
E \tau \sum_{i=1}^{n} (\tilde{\nu}_{i}, \delta_{h,\lambda} \delta_{-h,\mu} \nu_{i}) &\leq \tau \sum_{i=1}^{n} E \|\tilde{\nu}_{i} \|_{0} \| \delta_{h,\lambda} \partial_{\mu} \nu_{i} \|_{0} \\
	&\leq \varepsilon E \tau \sum_{i=1}^{n} \|D \delta_{h,\lambda} \nu_{i} \|_{0}^{2} + \frac{N}{\varepsilon} E \tau \sum_{i=1}^{n} \| \tilde{\nu}_{i}\|_{0}^{2}
\end{align*}
which holds for arbitrary $\varepsilon > 0$ and $N$ depending only on $|\mu|$. This leads to the following 
\begin{equation}
\label{eqn: almost W1 estimate for modified space time scheme}
\begin{split}
E \| \tilde{\nu}_{j} \|_{0}^{2} + E \tau \sum_{i= 1}^{j} \sum_{\lambda \in \Lambda} \|\delta_{h,\lambda} \nu_{i} \|_{0}^{2} \leq N E \tau \| \nu_{0}\|_{1}^{2} + NE \mathfrak{K}_{1}^{n}\\
	+ \frac{1}{2d} E \tau \sum_{i =1}^{j} \sum_{\lambda \in \Lambda} \| D \delta_{h,\lambda} \nu_{i} \|_{0}^{2} + N E \tau \sum_{i=1}^{j}  \|\tilde{\nu}_{i} \|_{0}^{2}
\end{split}
\end{equation}
for each $x^{l}$, $l \in \{1, \dots, d\}$. Summing up over each direction $x^{l}$, the term with factor $1/2d$ can be seen to be estimated by other terms already appearing on the right hand side of \eqref{eqn: almost W1 estimate for modified space time scheme}. Then by the same procedure as before we obtain
\begin{equation}
\label{eqn: W1 estimate for modified space time scheme}
E \max_{i \leq n} \| D\nu_{i} \|_{0}^{2} + E \tau \sum_{i = 1}^{n} \sum_{\lambda \in \Lambda} \| D\delta_{h,\lambda} \nu_{i} \|_{0}^{2} \leq N E \tau \|\nu_{0}\|_{1}^{2} + N E \mathfrak{K}_{1}^{n}
\end{equation}
which proves the theorem when $\mathfrak{m}=1$.

Assuming that $\mathfrak{m} \geq 2$ and that \eqref{eqn: modified space-time scheme estimate} holds for each integer $p < \mathfrak{m}$ in place of $\mathfrak{m}$, then we can differentiate \eqref{eqn: modified space-time scheme} $(p+1)$ times and, repurposing the notation $\tilde{\phi}$ for the $(p+1)$th order derivatives of $\phi$ with respect to $x$, we obtain \eqref{eqn: modified space time discretized Cauchy problem with one derivative in x-i} with different $\hat{f}^{0}$ and $\hat{g}^{\rho}$. Namely, the $\hat{f}^{0}$ will be the sum of $\tilde{f}^{0}$ and linear combinations of certain $i$th order derivatives of $\mathfrak{a}^{\lambda\mu}$ together with certain $(p+1-i)$th order derivatives of $\delta_{h,\lambda}\delta_{-h,\mu} \nu$, for integer $i \leq (p+1)$. As before, the $L^{2}$-norms of the $(p+1-i)$th derivatives of $\delta_{h,\lambda}\delta_{-h,\mu} \nu$ are dominated by the $L^{2}$-norms of the $(p+2-i)$th derivatives of $\delta_{h,\lambda}\nu$ which are in turn less than the $W^{p+1}_{2}$-norm of $\delta_{h,\lambda} \nu$. After similar changes are made in $\hat{g}^{\rho}$ we obtain the counterpart of \eqref{eqn: W1 estimate for modified space time scheme} which then yields \eqref{eqn: modified space-time scheme estimate} with $(p+1)$ in place of $\mathfrak{m}$. 
\end{proof}

We also have the following Sobolev space estimate for solutions to the implicit time scheme. Let $m$ be a nonnegative integer.

\begin{thm}
\label{thm: solvability implicit time scheme with estimate}
Let $f \in \mathbf{W}^{m}_{2}(\tau)$ and $g^{\rho} \in \mathbf{W}^{m+1}_{2}(\tau)$. If Assumptions \ref{asm: boundedness of coefficients} and \ref{asm: strong stochastic parabolicity} hold then \eqref{eqn: implicit time scheme} has a unique solution $v \in \mathbf{W}^{m+2}_{2}(\tau)$ for a given $W^{m+1}_{2}$-valued $\mathcal{F}_{0}$-measurable initial condition $v_{0}$. Moreover
\begin{equation*}
\begin{split}
E \max_{i \leq n} \| v_{i}\|_{m+1}^{2} + E \tau \sum_{i=1}^{n} \| v_{i} \|_{m+2}^{2} \leq N E \| v_{0}\|_{m+1}^{2} \\ + N E \tau \sum_{i=0}^{n} (\| f_{i}\|_{m}^{2} + \| g_{i}\|_{m+1}^{2})
\end{split}
\end{equation*} 
holds for a constant $N$ depending only on $d$, $d_{1}$, $m$, $K_{0}$, \dots, $K_{m+1}$, $\kappa$, and $T$. 
\end{thm}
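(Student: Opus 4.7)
The plan is to mirror the strategy of Theorem \ref{thm: Sobolev valued solution to space-time scheme and estimate} almost verbatim, swapping the finite-difference manipulations for integration by parts. First I handle solvability: at each step the scheme is equivalent to $(I - \tau \mathcal{L}_i) v_i = F_i$ with $F_i := v_{i-1} + \tau f_i + \sum_\rho (\mathcal{M}^\rho_{i-1} v_{i-1} + g^\rho_{i-1}) \xi^\rho_i$. Assumption \ref{asm: strong stochastic parabolicity} forces the principal part $a^{\alpha\beta}$ (with $\alpha,\beta \ge 1$) to be uniformly elliptic, so under Assumption \ref{asm: boundedness of coefficients} standard elliptic theory yields the invertibility of $I - \tau\mathcal{L}_i$ from $W^{m+2}_2$ onto $W^m_2$ for $\tau$ sufficiently small, producing a unique $\mathbf{W}^{m+2}_2(\tau)$-valued iterate.

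For the estimate with $m=0$, start from the identity $\|v_i\|_0^2 - \|v_{i-1}\|_0^2 = 2(v_i, v_i - v_{i-1}) - \|v_i - v_{i-1}\|_0^2$ and substitute the scheme as in the derivation of \eqref{eqn: intermediate equation with H, I, J}. Splitting $(v_i,\,\cdot\,) = (v_{i-1},\,\cdot\,) + (v_i - v_{i-1},\,\cdot\,)$ on the stochastic side yields a drift piece $\mathcal{H}$, a martingale piece $\mathcal{I}$, a quadratic-variation piece $\mathcal{J}^{(1)}$, and a cross piece $\mathcal{J}^{(2)}$. The continuous-space analogue of Lemma \ref{lem: bound on Q}, obtained by integrating $(\phi, a^{\alpha\beta} D_\alpha D_\beta \phi)$ by parts (using $a^{\alpha\beta}=a^{\beta\alpha}$) and applying Assumption \ref{asm: strong stochastic parabolicity}, gives
\[
2(\phi, \mathcal{L}_i \phi) + \sum_{\rho=1}^{d_1}\|\mathcal{M}^\rho_i \phi\|_0^2 \le N\|\phi\|_0^2 - \tfrac{\kappa}{2}\|D\phi\|_0^2.
\]
A shift of index in $\mathcal{J}^{(1)}$ (writing $\sum_{i=1}^j \|\mathcal{M}^\rho_{i-1} v_{i-1}\|_0^2\tau$ as $\sum_{i=1}^{j}\|\mathcal{M}^\rho_i v_i\|_0^2\tau$ plus a boundary correction controlled by $\tau\|v_0\|_1^2$) aligns it with $\mathcal{H}$ so that the coercive bound applies. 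Taking expectations kills $\mathcal{I}$, and Lemma \ref{lem: discrete Gronwall lemma} bounds $\max_i E\|v_i\|_0^2$. Upgrading to $E\max_i$ uses the Davis inequality followed by Young's inequality exactly as in \eqref{eqn: estimate for E max I} to dispose of $E\max|\mathcal{I}_i|$ and $E\max|\mathcal{J}^{(2)}_i|$, closing the base case.

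For $m\ge 1$, proceed by induction on derivative order. Differentiating \eqref{eqn: implicit time scheme} $p+1$ times in $x$ yields an equation of the same form for $D^{p+1}v$, with forcing augmented by commutator terms of shape $(D^j a^{\alpha\beta})(D^{p+2-j} v)$ and $(D^j b^{\alpha\rho})(D^{p+1-j}v)$ for $j\ge 1$. By Assumption \ref{asm: boundedness of coefficients} these are $L^2$-dominated by Sobolev norms of $v$ strictly below the current top order, hence controlled by the inductive hypothesis; applying the base estimate to the differentiated system and absorbing lower-order terms (as in the transition from \eqref{eqn: almost W1 estimate for modified space time scheme} to \eqref{eqn: W1 estimate for modified space time scheme}) delivers the full bound.

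The main obstacle is the time-step mismatch between $\mathcal{L}_i v_i$ (at time $i$) and $\mathcal{M}^\rho_{i-1} v_{i-1}$ (at time $i-1$): the good parabolic cancellation $-\kappa\|Dv_i\|_0^2$ only emerges after the re-indexing described above, so one must track the resulting boundary contribution at $i=0$ and ensure it is absorbed by the $E\|v_0\|_{m+1}^2$ term on the right-hand side via Assumption \ref{asm: initial conditions and free terms}, while the corresponding correction at $i=j$ is harmlessly bounded by $\|v_j\|_1^2$ and can be kept on the left.
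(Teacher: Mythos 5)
Your overall skeleton is the right one, and your solvability step is essentially the paper's (the paper just makes the ``standard elliptic theory'' explicit through boundedness and coercivity of $I-\tau\mathcal{L}_i$ in the duality pairing between $W^{m+2}_2$ and $W^{m}_2$ based on the $W^{m+1}_2$ inner product, plus a Galerkin-type limit). For the estimate, however, the paper does \emph{not} induct on derivative order: it runs the discrete It{\^o} identity once, directly in the $W^{m+1}_2$ inner product, so that the drift is handled through $\langle v_i,\mathcal{L}_i v_i+f_i\rangle$ with the $W^{m+2}_2$--$W^{m}_2$ pairing. Your induction, as written, has a concrete problem with the forcing: after differentiating the scheme $p+1$ times the new drift forcing contains $D^{p+1}f$, and ``applying the base estimate to the differentiated system'' puts $E\tau\sum_i\|f_i\|_{p+1}^2$ on the right; at the top level $p+1=m+1$ this requires $f\in\mathbf{W}^{m+1}_2(\tau)$, whereas only $f\in\mathbf{W}^{m}_2(\tau)$ is assumed and only $\|f\|_m$ may appear in the conclusion (which is what Theorem \ref{thm: solvability of the time system with estimate} needs). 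Your commutator bookkeeping covers the coefficients but not $f$; to stay at $\|f\|_m$ you must pair $(D^{m+1}v_i,D^{m+1}f_i)=-(D^{m+2}v_i,D^{m}f_i)$ and absorb into the coercive term, which is exactly what the paper's choice of pairing does automatically.

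More seriously, the boundary term produced by your re-indexing of $\mathcal{J}^{(1)}$ is not absorbed the way you claim. At the top derivative order the leftover $i=0$ contribution is $\tau\sum_{\rho}\|\mathcal{M}^{\rho}_0 v_0+g^{\rho}_0\|_{m+1}^2$, which is of size $\tau\|v_0\|_{m+2}^2+\tau\|g_0\|_{m+1}^2$, and $\tau\|v_0\|_{m+2}^2$ is \emph{not} controlled by $E\|v_0\|_{m+1}^2$ when $v_0$ is only $W^{m+1}_2$-valued (it may be infinite). This is precisely why the space-time analogue, Theorem \ref{thm: Sobolev valued solution to space-time scheme and estimate}, carries the term $NE\tau\|\nu_0\|_{\mathfrak{m}+1}^2$ --- one derivative above the level of the estimate --- on the right of \eqref{eqn: modified space-time scheme estimate}; for the differential operator $\mathcal{M}^{\rho}$ the analogous price is a full extra derivative of $v_0$. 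To reach the stated conclusion with only $\|v_0\|_{m+1}$ one must extract additional help at the first time step, e.g.\ not discard the $-\tau^2\|\mathcal{L}_1 v_1+f_1\|^2$ term (equivalently, use the smoothing of $(I-\tau\mathcal{L}_1)^{-1}$ together with Assumption \ref{asm: strong stochastic parabolicity}); your sketch, which simply asserts absorption into $E\|v_0\|_{m+1}^2$ ``via Assumption \ref{asm: initial conditions and free terms}'', does not supply this, so that step as stated would fail (the paper's own display for $\mathcal{H}_j+\mathcal{J}^{(1)}_j$ is admittedly terse on the same point, but it does not rest on the absorption you invoke).
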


\begin{proof}
Proving the solvability of \eqref{eqn: implicit time scheme} reduces to solving the elliptic problem $$(I - \tau \mathcal{L}_{i})v_{i} = v_{i-1} + \tau f_{i} + \sum_{\rho=1}^{d_{1}} \xi^{\rho}_{i} ( \mathcal{M}^{\rho}_{i-1} v_{i-1} + g^{\rho}_{i-1})$$ for each $i \in \{1, \dots, n\}$ where $I$ is the identity. That is, we claim that $\mathcal{A}:= (I - \tau \mathcal{L})$ is a $W^{m}_{2}$-valued operator on $W^{m+2}_{2}$ such that $\mathcal{A}_{i}$ is
  \begin{enumerate}
  \item \emph{bounded}, i.e.\ $\| \mathcal{A}_{i} \phi \|_{m}^{2} \leq K \| \phi \|_{m+2}^{2}$ for a constant $K$, 
  \item and \emph{coercive}, i.e.\ $\langle \mathcal{A}_{i} \phi, \phi \rangle \geq \lambda \| \phi \|_{m+2}^{2}$ for a constant $\lambda > 0$,
  \end{enumerate}
for every $i \in \{1, \dots, n\}$ and for all $\phi \in W^{m+2}_{2}$, where $\langle \cdot{},\cdot{} \rangle$ denotes the duality pairing between $W^{m+2}_{2}$ and $W^{m}_{2}$ based on the inner product in $W^{m+1}_{2}$. Then by the separability of $W^{m+2}_{2}$, there exists a countable dense subset $\{e_{j}\}_{j=1}^{\infty}$ such that for fixed $p\geq 1$, $\phi_{p} = \sum_{j=1}^{p} c_{j}e_{j}$ for constants $c_{j}$ where $\phi_{p} \in W^{m+2}_{2}$ is not identically zero. We fix $i$ and for every $\psi \in W^{m}_{2}$ consider $\mathcal{A}$ acting on $\phi_{p}$, that is $\langle \mathcal{A} \phi_{p}, e_{j} \rangle = \langle \psi, e_{j} \rangle $ for all $j \in \{1, \dots, p\}$. Taking linear combinations we obtain $\langle \mathcal{A} \phi_{p}, \phi_{p} \rangle = \langle \psi, \phi_{p} \rangle$ from which we derive $$\lambda \| \phi_{p}\|_{m+2}^{2} \leq \langle \mathcal{A} \phi_{p}, \phi_{p} \rangle = \langle \psi, \phi_{p} \rangle \leq \|\psi\|_{m} \|\phi_{p}\|_{m+2}$$ by the coercivity of $\mathcal{A}$ and an application of the Cauchy-Schwarz inequality. Thus $\| \phi_{p}\|_{m+2} \leq \frac{1}{\lambda} \| \psi \|_{m}$ and hence (by the reflexivity of $W^{m+2}_{2}$), there exists a subsequence $p_{k}$ such that $\phi_{p_{k}}$ converges weakly to $\phi$ and in particular $\langle \mathcal{A}_{i} \phi_{p_{k}}, e_{j} \rangle \to \langle \mathcal{A} \phi, e_{j} \rangle $ for every $j$. Therefore for every $\psi \in W^{m}_{2}$ there exists a $\phi \in W^{m+2}_{2}$ satisfying $\mathcal{A}_{i} \phi = \psi$ for every $i \in \{1, \dots, n\}$. Moreover, this solution is easily seen to be unique.
%

Using the existence and uniqueness to the elliptic problem in each interval, we note that $$v_{0} + \tau f_{1} + \sum_{\rho=1}^{d_{1}} (\mathcal{M}^{\rho}_{0} v_{0} + g^{\rho}_{0})\xi^{\rho}_{1} \in W^{m}_{2}$$ by Assumption \ref{asm: initial conditions and free terms} and therefore there exists a $v_{1} \in W^{m+2}_{2}$ satisfying $$(I-\tau \mathcal{L}_{1})v_{1} = v_{0} + \tau f_{1} + \sum_{\rho=0}^{d_{1}}(\mathcal{M}^{\rho}_{0} v_{0} + g^{\rho}_{0})\xi^{\rho}_{1}.$$ Further, assuming that there exists a $v_{i} \in W^{m+2}_{2}$ satisfying \eqref{eqn: implicit time scheme} we have that $$v_{i} + \tau f_{i+1} + \sum_{\rho=0}^{d_{1}} (\mathcal{M}^{\rho}_{i}v_{i} + g^{\rho}_{i})\xi^{\rho}_{i+1} \in W^{m}_{2}$$ by the induction hypothesis and Assumption \ref{asm: initial conditions and free terms}, and therefore there exists a $v_{i+1} \in W^{m+2}_{2}$ satisfying \eqref{eqn: implicit time scheme}. Hence we obtain $v=(v_{i})_{i=1}^{n}$ such that each $v_{i} \in W^{m+2}_{2}$ satisfies \eqref{eqn: implicit time scheme}.

It only remains to prove the claim concerning ellipticity of $\mathcal{A}$. By Assumption \ref{asm: boundedness of coefficients}, clearly $\mathcal{A}_{i}$ is a bounded linear operator for each $i$. We see that 
\begin{align*}
\langle \mathcal{A} \phi, \phi \rangle &= \langle I \phi, \phi \rangle - \tau \langle \mathcal{L} \phi, \phi \rangle \\ 
	&= \| \phi\|_{m+1}^{2} - \tau \langle \mathcal{L} \phi , \phi \rangle
\end{align*} 
where, by Assumptions \ref{asm: boundedness of coefficients} and \ref{asm: strong stochastic parabolicity},
\begin{align*} 
\langle \mathcal{L}\phi , \phi \rangle &= ((a^{0\beta} - D_{\alpha}a^{\alpha\beta})D_{\beta} \phi , \phi) - (a^{\alpha\beta} D_{\beta} \phi, D_{\alpha} \phi) \\
	&\leq C \| \phi \|_{m+1}^{2} - \frac{\kappa}{2} \|\phi\|_{m+2}^{2}
\end{align*}
in the $W^{m+1}_{2}$ inner product for $\alpha, \beta \in \{1, \dots, d\}$ and for a constant $C$ depending on $K_{0}$ and $K_{1}$. Therefore $$\langle \mathcal{A}\phi, \phi \rangle \geq \frac{\kappa}{2} \tau \| \phi\|_{m+2}^{2} + (1- \tau K) \| \phi\|_{m+1}^{2} \geq \frac{\kappa}{2} \tau \| \phi\|_{m+2}^{2}$$ for sufficiently small $\tau$ and hence (ii) is satisfied.  

To prove the estimate, we use a method similar to that in the proof of Theorem \ref{thm: Sobolev valued solution to space-time scheme and estimate} to arrive at \eqref{eqn: intermediate equation with H, I, J} with $v$ in place of $\nu$, $\mathcal{L}$ in place of $L^{h}$, $\mathcal{M}^{\rho}$ in place of $M^{h,\rho}$, all in the $W^{m+1}_{2}$-norm instead of the $L^{2}$-norm. Again, we decompose $\mathcal{J}$ into $\mathcal{J}^{(1)}$ and $\mathcal{J}^{(2)}$ using the processes $Y^{\pi \rho}(t)$ that arise by applying the It{\^o} formula to the product of increments of the independent Wiener processes. However this time, instead of using Lemma \ref{lem: bound on Q}, we observe that
$$\mathcal{H}_{j} + \mathcal{J}^{(1)}_{j} \leq N \tau \sum_{i=1}^{j} \| v_{i} \|_{m+1}^{2} -\frac{\kappa}{2} \tau \sum_{i=1}^{j} \| v_{i} \|_{m+2}^{2} + N \tau \sum_{i=0}^{j} (\|f_{i}\|_{m}^{2} + \|g_{i}\|_{m+1}^{2})$$
since
\begin{equation*}
\int_{\mathbf{R}^{d}} 2v(x)\mathcal{L} v(x) + \sum_{\rho=1}^{d_{1}} |\mathcal{M}^{\rho}v(x)|^{2} \, dx \leq (\varepsilon - \kappa C) \| v\|_{m+2}^{2} + C \| v \|_{m+1}^{2}
\end{equation*} 
for $\varepsilon > 0$ by the considerations above. Therefore we have that
\begin{equation*}
\begin{split}
\| v_{j}\|_{m+1}^{2} + \tau \sum_{i=1}^{j}\|v_{i}\|_{m+2}^{2} \leq N \| v_{0} \|_{m+1}^{2} + N \mathcal{I}_{j} + N \mathcal{J}^{(2)}_{j} \\ +  N\tau \sum_{i=0}^{j} (\|f_{i}\|_{m}^{2} + \| g_{i}\|_{m+1}^{2})
\end{split}
\end{equation*}
and the estimate follows by considering the maximum and then taking the expectation. Moreover, with the estimate, it is clear that the solution $v \in \mathbf{W}^{m+2}_{2}(\tau)$.
\end{proof}

We will use the theorem above to obtain estimates in appropriate Sobolev spaces for a system of time discretized equations. For $i \in \{0, \dots, n\}$ and an integer $p \geq 1$, let $$\mathcal{L}^{(0)}_{i} := \sum_{\lambda,\mu \in \Lambda} \mathfrak{a}^{\lambda\mu}_{i} \partial_{\lambda}\partial_{\mu},$$ $$\mathcal{M}^{(0)\rho}_{i} := \sum_{\lambda \in \Lambda} \mathfrak{b}^{\lambda\rho}_{i} \partial_{\lambda},$$
and let 
\begin{equation*}
\begin{split}
\mathcal{L}^{(p)}_{i} := p! \sum_{\lambda,\mu \in \Lambda_{0}} \mathfrak{a}^{\lambda\mu}_{i} \sum_{j=0}^{p} A_{p,j} \partial^{j+1}_{\lambda} \partial^{p-j+1}_{\mu} + (p+1)^{-1} \sum_{\lambda \in \Lambda_{0}} \mathfrak{a}^{\lambda 0}_{i} \partial^{p+1}_{\lambda} \\+ (p+1)^{-1} \sum_{\mu \in \Lambda_{0}} \mathfrak{a}^{0\mu}_{i} \partial^{p+1}_{\mu},
\end{split}
\end{equation*}
\begin{equation*}
\mathcal{M}^{(p)\rho}_{i} := (p+1)^{-1} \sum_{\lambda \in \Lambda_{0}} \mathfrak{b}^{\lambda\rho}_{i} \partial^{p+1}_{\lambda},
\end{equation*}
\begin{equation*}
\mathcal{O}^{h(p)}_{i} := L^{h}_{i} - \sum_{j=0}^{p} \frac{h^{j}}{j!} \mathcal{L}^{(j)}_{i},
\end{equation*}
and 
\begin{equation*}
\mathcal{R}^{h(p)\rho}_{i}:= M^{h,\rho}_{i} - \sum_{j=0}^{p} \frac{h^{j}}{j!} \mathcal{M}^{(j)\rho}_{i}
\end{equation*}
where $A_{p,j}$ is defined by 
\begin{equation}
\label{eqn: Taylor expansion error coefficient}
A_{p,j} = \frac{(-1)^{p-j}}{(j+1)!(p-j+1)!}.
\end{equation} 
For $p \geq 1$, the values of $\mathcal{L}^{(p)} \phi$ and $\mathcal{M}^{(p)\rho} \phi$ are obtain by formally taking the $p$th derivatives in $h$ of $L^{h} \phi$ and $M^{h,\rho} \phi$ at $h = 0$. 

For a positive integer $k \leq m$, the sequences of random fields $v^{(1)}$, \dots, $v^{(k)}$ needed in \eqref{eqn: expansion} will be the embeddings of random variables taking values in certain Sobolev spaces obtained as solutions to a system of time discretized SPDE. Namely, as the solutions to 
\begin{equation}
\label{eqn: system of time discretized spde}
\begin{split}
\nu^{(p)}_{i} = \nu^{(p)}_{i-1} + (\mathcal{L}_{i} \nu^{(p)}_{i} + \sum_{l=1}^{p} C^{l}_{p} \mathcal{L}^{(l)}_{i} \nu^{(p-l)}_{i} )\tau \\ 
	+ (\mathcal{M}^{\rho}_{i-1}\nu^{(p)}_{i-1} + \sum_{l=1}^{p} C^{l}_{p} \mathcal{M}^{(l)\rho}_{i-1} \nu^{(p-l)}_{i-1} )\xi^{\rho}_{i},
\end{split}
\end{equation}
for $p \in \{1,\dots,k\}$ where $C^{l}_{p} = p(p-1)\cdots (p-l+1)/l!$ is the binomial coefficient and $\nu^{(0)}$ is the solution to \eqref{eqn: implicit time scheme} from Theorem \ref{thm: solvability implicit time scheme with estimate}.
\begin{thm}
\label{thm: solvability of the time system with estimate}
Let Assumptions \ref{asm: boundedness of coefficients}, \ref{asm: strong stochastic parabolicity}, \ref{asm: initial conditions and free terms}, and \ref{asm: boundedness of space-time scheme coefficients} hold with $\mathfrak{m} = m \geq k \geq 1$ and let $\nu^{(0)} \in \mathbf{W}^{m+2}_{2}(\tau)$ be the solution to \eqref{eqn: implicit time scheme} with initial condition $u_{0}$ from Theorem \ref{thm: solvability implicit time scheme with estimate}. Then the system \eqref{eqn: system of time discretized spde} with initial condition $$\nu^{(1)}_{0}=\nu^{(2)}_{0} = \dots = \nu^{(k)}_{0} = 0$$ has a unique set of solutions $(\nu^{(p)} )_{p=1}^{k}$ such that each $\nu^{(p)} \in \mathbf{W}^{m+2-p}_{2}(\tau)$. Moreover for each $p \in \{1,\dots, k\}$,
\begin{equation}
\label{eqn: time system estimate}
\begin{split}
E \max_{i \leq n} \| \nu^{(p)}_{i}\|_{m+1-p}^{2} + E \tau \sum_{i=1}^{n} \| \nu^{(p)}_{i} \|_{m+2 -p}^{2} \\ \leq N E \tau \sum_{i = 0}^{n} ( \| f_{i} \|_{m}^{2} + \| g_{i} \|_{m+1}^{2} )
\end{split}
\end{equation}
holds for a constant $N$ depending only on $d$, $d_{1}$, $\Lambda$, $m$, $K_{0}$, \dots, $K_{m+1}$, $A_{0}$, \dots, $A_{m}$, $\kappa$, and $T$. 
\end{thm}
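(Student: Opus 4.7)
The plan is to build the $\nu^{(p)}$ by induction on $p\in\{1,\dots,k\}$, treating each equation in the system as an instance of the implicit time scheme \eqref{eqn: implicit time scheme} for the unknown $\nu^{(p)}$ with zero initial condition and with modified free terms assembled from the previously constructed $\nu^{(0)},\dots,\nu^{(p-1)}$.

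The base case $p=0$ is Theorem \ref{thm: solvability implicit time scheme with estimate} itself, applied with initial condition $u_0$ and free terms $f, g^{\rho}$: it delivers a unique $\nu^{(0)}\in\mathbf{W}^{m+2}_{2}(\tau)$ together with a bound of the form $E\max_{i}\|\nu^{(0)}_i\|_{m+1}^2+E\tau\sum_i\|\nu^{(0)}_i\|_{m+2}^2\le N\mathcal{K}_m$, which is exactly \eqref{eqn: time system estimate} at level $p=0$. For the inductive step, suppose that $\nu^{(j)}\in\mathbf{W}^{m+2-j}_{2}(\tau)$ has been constructed for $j\in\{0,\dots,p-1\}$ and satisfies \eqref{eqn: time system estimate}. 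Rewrite \eqref{eqn: system of time discretized spde} as \eqref{eqn: implicit time scheme} for $\nu^{(p)}$ with free terms
\begin{equation*}
\tilde f^{(p)}_i:=\sum_{l=1}^{p}C^l_p\,\mathcal{L}^{(l)}_i\nu^{(p-l)}_i,\qquad \tilde g^{(p),\rho}_i:=\sum_{l=1}^{p}C^l_p\,\mathcal{M}^{(l)\rho}_i\nu^{(p-l)}_i.
\end{equation*}

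Next I would do the order-counting. Inspecting the definitions, $\mathcal{L}^{(l)}$ is a differential operator of order $l+2$ whose coefficients are $\mathfrak{a}^{\lambda\mu}$-type terms; these are bounded together with all their derivatives up to order $\bar{\mathfrak{m}}=m$ by Assumption \ref{asm: boundedness of space-time scheme coefficients}. Similarly $\mathcal{M}^{(l)\rho}$ is of order $l+1$ with coefficients bounded to order $m$. Consequently
\begin{equation*}
\mathcal{L}^{(l)}\colon W^{m+2-(p-l)}_{2}\to W^{m-p}_{2},\qquad \mathcal{M}^{(l)\rho}\colon W^{m+2-(p-l)}_{2}\to W^{m+1-p}_{2}
\end{equation*}
are bounded for each $l\in\{1,\dots,p\}$, and the operator norms depend only on the $A_j$ with $j\le m$. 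Combined with the inductive hypothesis this yields $\tilde f^{(p)}\in\mathbf{W}^{m-p}_{2}(\tau)$ and $\tilde g^{(p),\rho}\in\mathbf{W}^{m+1-p}_{2}(\tau)$, with norms controlled by the $\mathbf{W}^{m+2-(p-l)}_{2}(\tau)$-norms of the previously built $\nu^{(p-l)}$.

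Since $p\le k\le m$, we have $m-p\ge 0$, and I can invoke Theorem \ref{thm: solvability implicit time scheme with estimate} with $m$ replaced by $m-p$, zero initial data, and free terms $\tilde f^{(p)},\tilde g^{(p),\rho}$. This produces a unique $\nu^{(p)}\in\mathbf{W}^{m+2-p}_{2}(\tau)$ solving the $p$th equation of \eqref{eqn: system of time discretized spde}, together with
\begin{equation*}
E\max_{i\le n}\|\nu^{(p)}_i\|_{m+1-p}^{2}+E\tau\sum_{i=1}^{n}\|\nu^{(p)}_i\|_{m+2-p}^{2}\le N\,E\tau\sum_{i=0}^{n}\bigl(\|\tilde f^{(p)}_i\|_{m-p}^{2}+\|\tilde g^{(p),\rho}_i\|_{m+1-p}^{2}\bigr).
\end{equation*}
Bounding the right-hand side by the operator norms of $\mathcal{L}^{(l)},\mathcal{M}^{(l)\rho}$ and then invoking the inductive estimate on each $\nu^{(p-l)}$ closes the induction and reproduces the form of \eqref{eqn: time system estimate}.

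The main obstacle is not analytic but combinatorial bookkeeping: one must verify that at each level $p$ the regularity surplus of the previously built $\nu^{(p-l)}$ is exactly enough to absorb the extra $l+2$ (respectively $l+1$) derivatives introduced by $\mathcal{L}^{(l)}$ (respectively $\mathcal{M}^{(l)\rho}$), so that $\tilde f^{(p)}$ and $\tilde g^{(p),\rho}$ sit in the Sobolev classes required by Theorem \ref{thm: solvability implicit time scheme with estimate}, and that the bounds on the $\mathfrak{a}^{\lambda\mu},\mathfrak{a}^{\lambda 0},\mathfrak{a}^{0\lambda},\mathfrak{b}^{\lambda\rho}$ granted by Assumption \ref{asm: boundedness of space-time scheme coefficients} (with $\mathfrak{m}=m$) remain sufficient through every step $p\le k$. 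Once this ledger balances, uniqueness follows from uniqueness in Theorem \ref{thm: solvability implicit time scheme with estimate} at each $p$, and the final constant $N$ inherits only the dependencies listed in the statement.
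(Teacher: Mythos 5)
Your proposal is correct and follows essentially the same route as the paper: a recursion on $p$ that rewrites the $p$th equation of \eqref{eqn: system of time discretized spde} as an instance of \eqref{eqn: implicit time scheme} with zero initial data and free terms $F^{(p)}=\sum_l C^l_p\mathcal{L}^{(l)}\nu^{(p-l)}$, $G^{(p)\rho}=\sum_l C^l_p\mathcal{M}^{(l)\rho}\nu^{(p-l)}$, verifies by the same order counting (order $l+2$ for $\mathcal{L}^{(l)}$, $l+1$ for $\mathcal{M}^{(l)\rho}$, coefficients bounded via Assumption \ref{asm: boundedness of space-time scheme coefficients}) that these lie in $\mathbf{W}^{m-p}_{2}(\tau)$ and $\mathbf{W}^{m+1-p}_{2}(\tau)$, and then invokes Theorem \ref{thm: solvability implicit time scheme with estimate} to obtain existence, uniqueness, and the estimate \eqref{eqn: time system estimate}.
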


\begin{proof}
For convenience let
\begin{equation*}
F^{(p)}_{i} := \sum_{j=1}^{p} C^{p}_{j} \mathcal{L}^{(j)}_{i} \nu^{(p-j)}_{i}
\end{equation*}
and 
\begin{equation*}
G^{(p) \rho}_{i} := \sum_{j=1}^{p} C^{p}_{j} \mathcal{M}^{(j)\rho}_{i} \nu^{(p-j)}_{i}
\end{equation*}
where we write $G^{(p)} = \sum_{\rho=1}^{d_{1}} G^{(p) \rho}$.

Observe that for each $p \in \{1, \dots, k\}$ the equation for $\nu^{(p)}$ in \eqref{eqn: system of time discretized spde} depends only on $\nu^{(l)}$ for $l \leq p$ and does not involve any of the unknown processes $\nu^{(l)}$ with indices $l > p$. Therefore we shall prove the solvability of the system and the desired properties on $\nu^{(p)}$ recursively using Theorem \ref{thm: solvability implicit time scheme with estimate}.

For $p=1$, we have
\begin{equation}
\label{eqn: system of time discretized spde with p=1}
\nu^{(1)}_{i} = \nu^{(1)}_{i-1} + (\mathcal{L}_{i} \nu^{(1)}_{i} + F^{(1)}_{i})\tau
	+ \sum_{\rho=1}^{d_{1}}(\mathcal{M}^{\rho}_{i-1} \nu^{(1)}_{i-1} + G^{(1)\rho}_{i-1})\xi^{\rho}_{i}.
\end{equation}
Since $\nu^{(0)} \in \mathbf{W}^{m+2}_{2}(\tau)$, for $\mathfrak{m}=m$ we have that $F^{(1)} \in \mathbf{W}^{m-1}_{2}(\tau)$ and $G^{(1)} \in \mathbf{W}^{m}_{2}(\tau)$ by Assumption \ref{asm: boundedness of space-time scheme coefficients}. Hence by Theorem \ref{thm: solvability implicit time scheme with estimate}, there exists a unique $\nu^{(1)} \in \mathbf{W}^{m+1}_{2}(\tau)$ satisfying \eqref{eqn: system of time discretized spde with p=1} with initial condition $\nu^{(1)}_{0} = 0$. Further, $\nu^{(1)}$ is estimated by \eqref{eqn: time system estimate} and thus Theorem \ref{thm: solvability of the time system with estimate} holds with $p=1$.

Now we assume that for $m \geq k \geq 2$ and $p \in \{2,\dots,k\}$ we have unique $\nu^{(1)}, \dots, \nu^{(p-1)}$ solving \eqref{eqn: system of time discretized spde} for $\nu^{(1)} = \dots \nu^{(p-1)} = 0$ with the desired properties. In particular, observe that for $j \in \{1,\dots, p\}$
\begin{equation}
\label{eqn: estimate for script-L-(j) of v-tau(p-j)}
\begin{split}
[\![ \mathcal{L}^{(j)} \nu^{(p-j)} ]\!]_{m-p} 
\leq N [\![ \nu^{(p-j)}]\!]_{m+2-(p-j)}
\end{split}
\end{equation}
and for each $\rho \in \{ 1, \dots, d_{1}\}$
\begin{equation}
\label{eqn: estimate for script-M-rho(j) of v-tau(p-j)}
\begin{split}
[\![ \mathcal{M}^{(i)\rho} \nu^{ (p-j)}]\!]_{m-p+1} 
\leq N [\![ \nu^{ (p-j)}]\!]_{m+1-(p-j)}
\end{split}
\end{equation}
for a constant $N$. Therefore it follows that $F^{(p)}\in \mathbf{W}^{m-p}_{2}(\tau)$ and $G^{(p)} \in \mathbf{W}^{m-p+1}_{2}(\tau)$. Applying Theorem \ref{thm: solvability implicit time scheme with estimate} yields the existence of a unique solution $\nu^{(p)} \in \mathbf{W}^{m-p+2}_{2}(\tau)$ that satisfies \eqref{eqn: system of time discretized spde} with initial condition $\nu^{(p)}_{0} = 0$. Together with \eqref{eqn: estimate for script-L-(j) of v-tau(p-j)} and \eqref{eqn: estimate for script-M-rho(j) of v-tau(p-j)} the estimate from Theorem \ref{thm: solvability implicit time scheme with estimate} implies that \eqref{eqn: time system estimate} holds. Further, the uniqueness of each $\nu^{(p)}$ follows from Theorem \ref{thm: solvability implicit time scheme with estimate}.
\end{proof}

For the convenience of the reader we record the following lemma and two remarks from \cite{GyongyKrylov:2010} that will be used in proving the error estimates.

\begin{lem}
\label{lem: h derivatives of differences}
Let $\phi \in W_{2}^{p+1}$ and $\psi \in W_{2}^{p+2}$ for a nonnegative integer $p$ and let $\lambda,\mu \in \Lambda_{0}$. Set 
$$ \partial_{\lambda} \phi = \lambda^{j} D_{j} \phi \quad \text{ and } \quad \partial_{\lambda\mu} = \partial_{\lambda}\partial_{\mu}.$$ Then we have
\begin{equation}
\label{eqn: pth derivative in h of a single difference}
\frac{\partial^{p}}{(\partial h)^{p}} \delta_{h,\lambda}\phi (x) = \int_{0}^{1} \theta^{p}\partial_{\lambda}^{p+1} \phi(x+h\theta\lambda)\, d\theta
\end{equation}
and 
\begin{equation}
\label{eqn: pth derivative in h of a double difference}
\begin{split}
\frac{\partial^{p}}{(\partial h)^{p}} & \delta_{h,\lambda}\delta_{-h,\mu}\psi (x) \\& = \int_{0}^{1} \!\!\! \int_{0}^{1} (\theta_{1}\partial_{\lambda} - \theta_{2}\partial_{\mu})^{p} \partial_{\lambda\mu} \psi(x + h(\theta_{1}\lambda - \theta_{2}\mu))\, d\theta_{1}d\theta_{2}
\end{split}
\end{equation}
for almost all $x \in \mathbf{R}^{d}$ for each $h \in \mathbf{R}$. Furthermore, for integer $l \geq 0$ if $\phi \in W_{2}^{p+2+l}$ and $\psi \in W_{2}^{p+3+l}$ then
\begin{equation}
\label{eqn: Taylor expansion error for single difference}
\left\| \delta_{h,\lambda}\phi - \sum_{j=0}^{p}\frac{h^{j}}{(j+1)!} \partial_{\lambda}^{j+1} \phi \right\|_{l} \leq \frac{|h|^{p+1}}{(p+2)!} \left\| \partial_{\lambda}^{p+2} \phi \right\|_{l}
\end{equation}
and
\begin{equation}
\label{eqn: Taylor expansion error for double difference}
\begin{split}
\left\| \delta_{h,\lambda} \delta_{-h, \mu} \psi - \sum_{i=0}^{p} h^{i} \sum_{j=0}^{i} A_{i,j} \partial_{\lambda}^{j+1} \partial_{\mu}^{i-j+1} \psi \right\|_{l} \leq N |h|^{p+1} \|\psi \|_{l+p+3},
\end{split}
\end{equation}
where $A_{i,j}$ is defined by \eqref{eqn: Taylor expansion error coefficient} and $N$ depends on $\lambda$, $\mu$, $d$, and $p$.
\end{lem}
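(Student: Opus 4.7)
The plan is to derive both integral representations \eqref{eqn: pth derivative in h of a single difference} and \eqref{eqn: pth derivative in h of a double difference} from the fundamental theorem of calculus applied to $\theta \mapsto \phi(x + h\theta\lambda)$, and then extract the error estimates \eqref{eqn: Taylor expansion error for single difference}--\eqref{eqn: Taylor expansion error for double difference} by inserting a Taylor expansion with integral remainder inside these representations. Throughout the argument I would first treat the case $\phi,\psi \in C^\infty_0(\mathbf{R}^d)$, where all manipulations are elementary, and then pass to general $\phi \in W^{p+1}_2$ and $\psi \in W^{p+2}_2$ by a standard density argument, using that the operators involved are continuous in the relevant Sobolev norms thanks to the translation invariance $\|\phi(\cdot + y)\|_l = \|\phi\|_l$.

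First I would prove \eqref{eqn: pth derivative in h of a single difference}. For smooth $\phi$, the identity
\begin{equation*}
\delta_{h,\lambda}\phi(x) = \frac{\phi(x+h\lambda) - \phi(x)}{h} = \int_0^1 \partial_\lambda \phi(x + h\theta\lambda)\, d\theta
\end{equation*}
follows from the fundamental theorem of calculus applied to $\theta \mapsto \phi(x+h\theta\lambda)$, since $\partial_\lambda = \lambda^j D_j$ is precisely the chain-rule derivative in $\theta$ divided by $h$. Differentiating under the integral $p$ times in $h$ pulls out a factor $(\theta\lambda)\cdot\nabla$ each time, i.e.\ $\theta \partial_\lambda$, yielding the stated formula. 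Inserting Taylor's formula with integral remainder for $\partial_\lambda \phi(x + h\theta\lambda)$ around $h = 0$, integrating in $\theta$ and computing $\int_0^1 \theta^{p+1}\,d\theta = 1/(p+2)$ together with $1/(p!(p+1)) = 1/(p+1)!$, produces the expansion with leading coefficients $h^j/(j+1)!$ and a remainder whose $W^l_2$ norm is bounded by $\frac{|h|^{p+1}}{(p+2)!}\|\partial_\lambda^{p+2}\phi\|_l$ after an application of Minkowski's integral inequality and the translation invariance of the $W^l_2$ norm.

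Next I would handle the double-difference formula \eqref{eqn: pth derivative in h of a double difference} by iterating the representation just obtained. Applying it first with $-h$ in place of $h$ and $\mu$ in place of $\lambda$ gives $\delta_{-h,\mu}\psi(x) = \int_0^1 \partial_\mu \psi(x - h\theta_2\mu)\,d\theta_2$; applying it again to the resulting function of $x$ with parameter $h$ and direction $\lambda$, and passing the outer $\partial_\lambda$ through the $\theta_2$-integral, yields
\begin{equation*}
\delta_{h,\lambda}\delta_{-h,\mu}\psi(x) = \int_0^1\!\!\int_0^1 \partial_{\lambda\mu}\psi\bigl(x + h(\theta_1\lambda - \theta_2\mu)\bigr)\, d\theta_1\, d\theta_2.
\end{equation*}
Each differentiation in $h$ now produces the chain-rule factor $\theta_1\partial_\lambda - \theta_2\partial_\mu$, and iterating $p$ times gives \eqref{eqn: pth derivative in h of a double difference}. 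For the error bound \eqref{eqn: Taylor expansion error for double difference}, I would expand $\partial_{\lambda\mu}\psi$ in Taylor series in $h$ around $h=0$ up to order $p$, use the binomial theorem $(\theta_1\partial_\lambda - \theta_2\partial_\mu)^i = \sum_{j=0}^i \binom{i}{j}\theta_1^j(-\theta_2)^{i-j}\partial_\lambda^j\partial_\mu^{i-j}$, and integrate in $\theta_1,\theta_2$ using $\int_0^1\theta_1^j\,d\theta_1 = 1/(j+1)$ and $\int_0^1(-\theta_2)^{i-j}\,d\theta_2 = (-1)^{i-j}/(i-j+1)$; the combinatorial identity
\begin{equation*}
\frac{1}{i!}\binom{i}{j}\frac{1}{j+1}\frac{(-1)^{i-j}}{i-j+1} = \frac{(-1)^{i-j}}{(j+1)!(i-j+1)!} = A_{i,j}
\end{equation*}
produces exactly the coefficients appearing in the statement, and the remainder is controlled in $W^l_2$ by $N|h|^{p+1}\|\psi\|_{l+p+3}$ (the extra $p+3$ derivatives accounting for the $p+1$ derivatives in $h$ plus the two from $\partial_{\lambda\mu}$), where $N$ depends only on $\lambda$, $\mu$, $d$, and $p$.

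The routine but mildly delicate step will be the combinatorial bookkeeping for \eqref{eqn: Taylor expansion error for double difference}: keeping track of which factors come from the binomial expansion, which from the integrations in $\theta_1,\theta_2$, and which from $1/i!$, and verifying that they collapse to $A_{i,j}$. No essential analytic obstacle arises, since translation invariance of Lebesgue measure together with Minkowski's integral inequality dominates each remainder integrand in $W^l_2$ by the corresponding norm of the highest-order derivative of $\phi$ or $\psi$, uniformly in the integration parameters $\theta, \theta_1, \theta_2 \in [0,1]$ and $s \in [0,1]$ arising from the Taylor remainder.
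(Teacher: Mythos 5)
Your proposal is correct and follows essentially the same route as the paper: the Newton--Leibniz integral representation of $\delta_{h,\lambda}$ (iterated for $\delta_{h,\lambda}\delta_{-h,\mu}$), differentiation under the integral in $h$, Taylor's formula in $h$ with integral remainder, and Minkowski's integral inequality with translation invariance, with the coefficients $A_{i,j}$ identified by the same combinatorial computation (the paper reads them off as the value at $h=0$ of the $p$th $h$-derivative, which is the identity you verify via the binomial expansion and the $\theta_1,\theta_2$ integrations). The only differences are cosmetic bookkeeping choices, so no further comparison is needed.
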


\begin{proof} 
It suffices to prove the lemma for $\phi, \psi \in C_{0}^{\infty}(\mathbf{R}^{d})$. For $p=0$, formula \eqref{eqn: pth derivative in h of a single difference} is obtained by applying the Newton-Leibniz formula 
to $\phi(x+\theta h \lambda)$ as a function of $\theta \in [0,1]$. 
Namely,
\begin{equation*}
\phi(x + h \lambda) - \phi(x ) = \int_{u=x}^{u = x+h\lambda} D_{j} \phi(u) \, du = h \int_{\theta=0}^{\theta=1} \lambda^{j}D_{j} \phi(x+ \theta h \lambda)\, d\theta 
\end{equation*}
and therefore $\delta_{h, \lambda} \phi(x) = \int_{0}^{1} \partial_{\lambda} \phi(x+ \theta h \lambda)\, d\theta$. Applying the Newton-Leibniz formula again yields \eqref{eqn: pth derivative in h of a double difference} with $p=0$. After that, for $p \geq 1$ one obtains \eqref{eqn: pth derivative in h of a single difference} and \eqref{eqn: pth derivative in h of a double difference} by differentiating both parts of these equations written with $p=1$.

Next by Taylor's formula for smooth $f(h)$ we have
\begin{equation*}
f(h) = \sum_{j=0}^{p} \frac{h^{j}}{j!} \frac{d^{j}}{(dh)^{j}} f(0) + \frac{1}{p!} \int_{0}^{h} (h-\theta)^{p} \frac{d^{p+1}}{(dh)^{p+1}} f(\theta) \, d\theta.
\end{equation*}
Applying this to 
\begin{equation*}
\delta_{h,\lambda}\phi(x) = \int_{0}^{1} \partial_{\lambda} \phi(x+ \theta h \lambda) \, d\theta
\end{equation*}
as a function of $h$ we see that 
\begin{equation*}
\begin{split}
\delta_{h,\lambda} \phi(x) = & \sum_{j=0}^{p} \frac{h^{j}}{(j+1)!} \partial_{\lambda}^{j+1} \phi(x) \\ &\qquad + \frac{h^{p+1}}{p!} \int_{0}^{1}\!\!\! \int_{0}^{1}(1-\theta_{2})^{p}\theta_{1}^{p+1} \partial_{\lambda}^{p+2} \phi(x+h\theta_{1}\theta_{2}\lambda)\, d\theta_{1}d\theta_{2}.
\end{split}
\end{equation*}
Now to prove \eqref{eqn: Taylor expansion error for single difference}, it remains only to use that by Minkowski's integral inequality the $W_{2}^{l}$-norm of the last term is less than the $W_{2}^{l}$-norm of $\partial_{\lambda}^{p+2} \phi$ times
\begin{equation*}
\frac{|h|^{p+1}}{p!} \int_{0}^{1} \!\!\! \int_{0}^{1} (1-\theta_{2})^{p} \theta_{1}^{p+1} \, d\theta_{1}d\theta_{2} = \frac{|h|^{p+1}}{(p+2)!}.
\end{equation*}
Similarly, by observing that the value at $h=0$ of the right hand side of \eqref{eqn: pth derivative in h of a double difference} is 
\begin{equation*}
p! \sum_{j=0}^{p} A_{p,j} \partial_{\lambda}^{j+1} \partial_{\mu}^{p-j+1} \psi(x),
\end{equation*}
we see that the left hand side of \eqref{eqn: Taylor expansion error for double difference} is the $W_{2}^{l}$-norm of 
\begin{equation*}
\frac{h^{p+1}}{p!} \int_{0}^{1} \!\!\! \int_{0}^{1} \!\!\! \int_{0}^{1} (1-\theta_{3})^{p}(\theta_{1}\partial_{\lambda} -\theta_{2}\partial_{\mu})^{p+1} \partial_{\lambda\mu} \psi (x+ h\theta_{3}(\theta_{1}\lambda -\theta_{2}\mu))\, d\theta_{1}d\theta_{2}d\theta_{3},
\end{equation*}
which yields \eqref{eqn: Taylor expansion error for double difference}.
\end{proof}

For integers $ l \geq 0$ and $r \geq 1$, denote by $W_{h,2}^{l,r}$ the Hilbert space of functions $\phi$ on $\mathbf{R}^{d}$ such that
\begin{equation}
\label{eqn: discrete Sobolev norm}
\| \phi \|_{l,r,h}^{2} := \sum_{\lambda_{1}, \dots, \lambda_{r} \in \Lambda} \| \delta_{h,\lambda_{1}} \times \dots \times \delta_{h, \lambda_{r}} \phi \|_{l}^{2} < \infty
\end{equation}
and set $W_{h,2}^{l,0} = W_{2}^{l}$. Then for any $\phi \in W_{2}^{l+r}$ we have 
\begin{equation*}
\| \phi \|_{l,r,h} \leq N \| \phi \|_{l+r},
\end{equation*}
where $N$ depends only on $|\Lambda_{0}|^{2} := \sum_{\lambda \in \Lambda_{0}} |\lambda|^{2}$ and $r$.

\begin{rmk}
\label{rmk: differences are bounded by derivatives}
Formula \eqref{eqn: pth derivative in h of a single difference} with $p=0$ and Minkowski's integral inequality imply that 
\begin{equation*}
\| \delta_{h,\lambda} \phi \|_{0} \leq \|\partial_{\lambda}\phi \|_{0}.
\end{equation*}
By applying this inequality to finite differences of $\phi$ and using induction we can conclude that $W_{2}^{l+r} \subset W_{h,2}^{l,r}$. 
\end{rmk}

\begin{rmk}
\label{rmk: bound on script-O and script-R}
Owing to Assumption \ref{asm: consistency}, for $i \in \{0, \dots, n\}$ we have that $\mathcal{L}^{(0)}_{i} = \mathcal{L}_{i}$ and $\mathcal{M}^{(0)\rho}_{i} = \mathcal{M}^{\rho}_{i}$. Also by Lemma \ref{lem: h derivatives of differences} and Assumptions \ref{asm: boundedness of space-time scheme coefficients} and \ref{asm: stochastic parabolicity for space-time scheme}, for $\phi \in W^{p+2+l}_{2}$ and $\psi \in W^{p+3+l}_{2}$ we have 
\begin{equation*}
\| \mathcal{O}^{h(p)} \psi \|_{l} \leq N |h|^{p+1} \| \psi\|_{l+p+3}
\end{equation*}
and 
\begin{equation*}
\| \mathcal{R}^{h(p)\rho} \phi\|_{l} \leq N |h|^{p+1} \| \phi \|_{l+p+2}
\end{equation*}
for a constant $N$ depending only on $p$, $d$, $l$, $A_{0}$, \dots, $A_{l}$, and $\Lambda$. 
\end{rmk}

For integers $k, l \geq 0$, let $\nu^{ (0)}, \nu^{ (1)}, \dots, \nu^{ (k)}$ be the functions from Theorem \ref{thm: solvability of the time system with estimate}. We define
\begin{equation}
\label{eqn: auxiliary equation}
r^{\tau,h}_{i} := \nu^{h}_{i} - \nu^{(0)}_{i} - \sum_{j=1}^{k} \frac{h^{j}}{j!} \nu^{(j)}_{i}
\end{equation}
for $i \in \{1, \dots, n\}$ where $\nu^{h}$ is the unique $L^{2}$-valued solution to \eqref{eqn: implicit space-time scheme} that exists by Theorem \ref{thm: Sobolev valued solution to space-time scheme and estimate} with initial condition $u_{0}$, data $f^{0} = f$ and $f^{\mu} = 0$, $\mu \in \Lambda_{0}$.

\begin{lem}
\label{lem: solvability of auxiliary equation}
Let Assumptions \ref{asm: boundedness of coefficients}, \ref{asm: strong stochastic parabolicity}, \ref{asm: initial conditions and free terms}, and \ref{asm: boundedness of space-time scheme coefficients} hold with $\mathfrak{m}=m = l + k + 1$ for integers $k, l \geq 0$ and let $r^{\tau, h}$ be defined as in equation \eqref{eqn: auxiliary equation}. Then $r^{\tau, h}_{0} = 0$, $r^{\tau,h} \in \mathbf{W}^{m-k}_{2}(\tau)$ and 
\begin{equation*}
r^{\tau, h}_{i} = r^{\tau, h}_{i-1} + (L^{h}_{i} r^{\tau, h}_{i} + F^{\tau,h}_{i}) \tau + \sum_{\rho=1}^{d_{1}} (M^{h,\rho}_{i-1} r^{\tau,h}_{i-1} + G^{\tau,h,\rho}_{i-1})\xi^{\rho}_{i}
\end{equation*}
for $i \in \{1, \dots, n\}$ where
\begin{equation*}
F^{\tau,h}_{i} := \sum_{j=0}^{k} \frac{h^{j}}{j!} \mathcal{O}^{h(k-j)}_{i} \nu^{(j)}_{i}
\end{equation*}
and
\begin{equation*}
G^{\tau,h,\rho}_{i-1} := \sum_{j=0}^{k} \frac{h^{j}}{j!} \mathcal{R}^{h(k-j)\rho}_{i-1} \nu^{(j)}_{i-1}
\end{equation*}
and, moreover, $F^{\tau,h} \in \mathbf{W}^{l}_{2}(\tau)$ and $G^{\tau,h,\rho} \in \mathbf{W}^{l+1}_{2}(\tau)$.
\end{lem}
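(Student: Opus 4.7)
The plan is to verify the three claims (initial value, recursion, integrability) in order. First, $r^{\tau,h}_0 = 0$ follows immediately from the choice of initial data: $\nu^h_0 = u_0 = \nu^{(0)}_0$ by hypothesis on the space-time scheme and on the $\nu^{(0)}$ equation, while $\nu^{(j)}_0 = 0$ for $j \in \{1,\dots,k\}$ by Theorem \ref{thm: solvability of the time system with estimate}. Next, I would derive the recursion by subtracting from the equation for $\nu^h$ the sum $\sum_{j=0}^{k} \frac{h^j}{j!}$ times the equation for $\nu^{(j)}$. The $f_i$ and $g^\rho_{i-1}$ terms cancel exactly, since these free terms appear only in the $\nu^{(0)}$ equation and enter the combination with weight $1$.

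The deterministic heart of the calculation is to identify the residual operator acting on $\nu^{(j)}$ with $F^{\tau,h}_i$. Starting from $L^h_i \nu^h_i = L^h_i r^{\tau,h}_i + \sum_{j=0}^{k}\frac{h^j}{j!} L^h_i \nu^{(j)}_i$ and using $L^h_i = \sum_{l=0}^{k-j}\frac{h^l}{l!}\mathcal{L}^{(l)}_i + \mathcal{O}^{h(k-j)}_i$ (with $\mathcal{L}^{(0)}_i = \mathcal{L}_i$ from Assumption \ref{asm: consistency}, as noted in Remark \ref{rmk: bound on script-O and script-R}), I would write the deterministic contribution as
\begin{equation*}
L^h_i r^{\tau,h}_i + \sum_{j=0}^{k}\sum_{l=1}^{k-j}\frac{h^{j+l}}{j!\,l!}\mathcal{L}^{(l)}_i \nu^{(j)}_i + \sum_{j=0}^{k}\frac{h^j}{j!}\mathcal{O}^{h(k-j)}_i\nu^{(j)}_i,
\end{equation*}
after separating the $l=0$ contribution which recovers $\mathcal{L}_i \nu^{(j)}_i$ and cancels the corresponding term from the $\nu^{(j)}$ equation. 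Reindexing the middle double sum by $p = j+l$ with $l \geq 1$ and using $\frac{1}{j!\,l!} = \frac{1}{p!}C^{l}_{p}$ shows that it equals $\sum_{p=1}^{k}\frac{h^p}{p!}\sum_{l=1}^{p} C^{l}_{p}\mathcal{L}^{(l)}_i \nu^{(p-l)}_i$, which is exactly the nonhomogeneous term in the equation for $\nu^{(p)}$ summed with weight $h^p/p!$. These cancel, leaving $F^{\tau,h}_i$. The same reindexing with $M^{h,\rho}_{i-1}$, $\mathcal{M}^{\rho}_{i-1}$, and $\mathcal{R}^{h(k-j)\rho}_{i-1}$ in place of their deterministic counterparts yields the stochastic part with $G^{\tau,h,\rho}_{i-1}$.

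Finally, I would verify the integrability claims. Since $m = l+k+1$, Theorem \ref{thm: Sobolev valued solution to space-time scheme and estimate} gives $\nu^h \in \mathbf{W}^{m}_{2}(\tau)$ and Theorem \ref{thm: solvability of the time system with estimate} gives $\nu^{(j)} \in \mathbf{W}^{m+2-j}_{2}(\tau) = \mathbf{W}^{l+k-j+3}_{2}(\tau)$ for $j \in \{0,\dots,k\}$; the weakest of these is $\mathbf{W}^{l+3}_{2}(\tau)$, which embeds into $\mathbf{W}^{l+1}_{2}(\tau) = \mathbf{W}^{m-k}_{2}(\tau)$, so $r^{\tau,h}\in\mathbf{W}^{m-k}_2(\tau)$. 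For the forcing terms, I would apply the bounds in Remark \ref{rmk: bound on script-O and script-R}: $\|\mathcal{O}^{h(k-j)}_i \nu^{(j)}_i\|_l \leq N|h|^{k-j+1}\|\nu^{(j)}_i\|_{l+k-j+3}$ and $\|\mathcal{R}^{h(k-j)\rho}_i \nu^{(j)}_i\|_{l+1} \leq N|h|^{k-j+1}\|\nu^{(j)}_i\|_{l+k-j+3}$; the regularity of $\nu^{(j)}$ matches these exactly, so summing over $j$ and invoking Theorem \ref{thm: solvability of the time system with estimate} gives $F^{\tau,h}\in\mathbf{W}^{l}_{2}(\tau)$ and $G^{\tau,h,\rho}\in\mathbf{W}^{l+1}_{2}(\tau)$.

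The main obstacle is the combinatorial bookkeeping in the second paragraph: one must carefully swap the order of summation between $j$ and $l$ and match up the binomial coefficients $C^{l}_{p}$ arising from the equations for $\nu^{(p)}$ with the factorials $1/(j!\,l!)$ produced by expanding $L^h$. Once this identification is made, everything else is routine application of the prior theorems and the Taylor-type bounds on $\mathcal{O}^{h(k-j)}$ and $\mathcal{R}^{h(k-j)\rho}$.
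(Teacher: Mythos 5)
Your proposal is correct and follows essentially the same route as the paper: subtract the weighted sum of the equations for $\nu^{(0)},\dots,\nu^{(k)}$ from the equation for $\nu^{h}$, reindex the double sum using $\tfrac{1}{j!\,l!}=\tfrac{1}{p!}C^{l}_{p}$ (the paper's $I^{\tau,h}$ and $J^{\tau,h,\rho}$ identities) to cancel the forcing terms of the system, and obtain the regularity of $r^{\tau,h}$, $F^{\tau,h}$, $G^{\tau,h,\rho}$ from Theorems \ref{thm: Sobolev valued solution to space-time scheme and estimate}, \ref{thm: solvability implicit time scheme with estimate}, \ref{thm: solvability of the time system with estimate} and Remark \ref{rmk: bound on script-O and script-R}. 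One harmless slip: for $k\le 1$ the weakest space in your list is $\mathbf{W}^{m}_{2}(\tau)$ (from $\nu^{h}$) rather than $\mathbf{W}^{l+3}_{2}(\tau)$, but since every space involved embeds into $\mathbf{W}^{m-k}_{2}(\tau)$ the conclusion $r^{\tau,h}\in\mathbf{W}^{m-k}_{2}(\tau)$ is unaffected.
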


\begin{proof}
By Theorem \ref{thm: Sobolev valued solution to space-time scheme and estimate} the solution to the space-time scheme $\nu^{h} \in \mathbf{W}^{m}_{2}(\tau)$ and by Theorem \ref{thm: solvability implicit time scheme with estimate} the solution to the time scheme $\nu^{(0)} \in \mathbf{W}^{m+2}_{2}(\tau)$. Therefore $r^{\tau,h} \in \mathbf{W}^{m}_{2}(\tau)$ when $k=0$ and, by Theorem \ref{thm: solvability of the time system with estimate}, $r^{\tau,h} \in \mathbf{W}^{m-k}_{2}(\tau)$ when $k \geq 1$.

Observe that 
\begin{align*}
\sum_{i=0}^{k} \frac{h^{i}}{i!} \sum_{j=0}^{k-i} \frac{h^{j}}{j!} \mathcal{L}^{(j)} \nu^{ (i)} &= \sum_{i=0}^{k-1} \frac{h^{i}}{i!} \sum_{j=1}^{k-i} \frac{h^{j}}{j!} \mathcal{L}^{(j)} \nu^{ (i)}\\
	&= \sum_{i=1}^{k} \sum_{j=0}^{k-i} \frac{h^{i+j}}{i!j!} \mathcal{L}^{(i)} \nu^{ (j)}\\
	&= \sum_{i=1}^{k} \sum_{j=i}^{k} \frac{h^{j}}{i!(j-i)!} \mathcal{L}^{(i)} \nu^{ (j- i)}\\
	&= \sum_{i =1}^{k} \sum_{j=1}^{i} \frac{h^{i}}{j!(i-j)!} \mathcal{L}^{(j)} \nu^{ (i-j)} =: I^{\tau,h}
\end{align*}
where summations over empty sets are zero. Therefore, we can rewrite $F^{\tau,h}$ as 
\begin{equation*}
F^{\tau,h} = L^{h} \nu^{ (0)} - \mathcal{L}\nu^{ (0)} + \sum_{j=1}^{k} \frac{h^{j}}{j!} L^{h} \nu^{ (j)} - \sum_{j=1}^{k} \frac{h^{j}}{j!} \mathcal{L} \nu^{ (j)} - I^{\tau,h}.
\end{equation*}
Similarly, observe that 
\begin{equation*}
\sum_{i=0}^{k} \frac{h^{i}}{i!} \sum_{j=0}^{k-i} \frac{h^{j}}{j!} \mathcal{M}^{(j)\rho} \nu^{ (i)} = \sum_{i=1}^{k}\sum_{j=1}^{i} \frac{h^{i}}{j! (i-j)!} \mathcal{M}^{(j)\rho} \nu^{ (i-j)} =: J^{\tau,h,\rho}
\end{equation*}
and therefore 
\begin{equation*}
G^{\tau,h,\rho} = M^{h,\rho}\nu^{(0)} - \mathcal{M}^{\rho}\nu^{(0)} + \sum_{j=1}^{k} \frac{h^{j}}{j!} M^{h,\rho}\nu^{(j)} - \sum_{j=1}^{k} \frac{h^{j}}{j!}\mathcal{M}^{\rho} \nu^{(j)} - J^{\tau,h,\rho}.
\end{equation*}
Thus, following from Remark \ref{rmk: bound on script-O and script-R} and Theorem \ref{thm: solvability of the time system with estimate}, $F^{\tau,h} \in \mathbf{W}^{l}_{2}(\tau)$ and $G^{\tau,h,\rho} \in \mathbf{W}^{l+1}_{2}(\tau)$.
\end{proof}

With the previous considerations, we are now prepared to prove the main results.

\section{Proof of Main Results}\label{sec: Proof of Main Results}

We prove a slightly more general result which implies Theorem \ref{thm: expansion and estimate for differences of expansion error}. Here we suppose that $\mathfrak{m} = m$.  

\begin{thm}
\label{thm: generalized result for auxiliary equation}
Let Assumptions \ref{asm: boundedness of coefficients}, \ref{asm: strong stochastic parabolicity}, \ref{asm: initial conditions and free terms}, \ref{asm: boundedness of space-time scheme coefficients}, \ref{asm: stochastic parabolicity for space-time scheme},  and \ref{asm: consistency} hold with $m = l + k + 1$ for integers $l, k \geq 0$. Then for $r^{\tau, h}$ as defined in \eqref{eqn: auxiliary equation} we have 
\begin{equation}
\label{eqn: estimate for auxiliary equation}
E \max_{i \leq n} \| r^{\tau, h}_{i} \|_{l}^{2} + E \tau \sum_{i=1}^{n} \sum_{\lambda \in \Lambda} \| \delta_{h,\lambda} r^{\tau,h}_{i} \|_{l}^{2} \leq N |h|^{2(k+1)} \mathcal{K}_{m},
\end{equation}
where $N$ depends only on $d$, $d_{1}$, $\Lambda$, $m$, $K_{0}$, \dots, $K_{m+1}$, $A_{0}$, \dots, $A_{m}$, $\kappa$, and $T$.
\end{thm}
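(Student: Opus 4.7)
The key observation is that by the preceding lemma, $r^{\tau,h}$ itself satisfies a space-time scheme of the form \eqref{eqn: modified space-time scheme} with zero initial condition and forcing terms $F^{\tau,h}$ (only in the $\mu = 0$ component, the others being zero) and $G^{\tau,h,\rho}$. Hence the plan is to apply Theorem \ref{thm: Sobolev valued solution to space-time scheme and estimate} with $\mathfrak{m} = l$, which is legitimate because the coefficient hypotheses are assumed with $\mathfrak{m} = m \geq l$, and then control the resulting right-hand side using the Taylor remainder bounds from Remark \ref{rmk: bound on script-O and script-R} together with the Sobolev estimates from Theorems \ref{thm: solvability implicit time scheme with estimate} and \ref{thm: solvability of the time system with estimate}.

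Concretely, the first step would yield
\begin{equation*}
E \max_{i \leq n} \| r^{\tau,h}_i \|_l^2 + E \tau \sum_{i=1}^n \sum_{\lambda \in \Lambda} \| \delta_{h,\lambda} r^{\tau,h}_i \|_l^2 \leq N E \tau \sum_{i=0}^n \bigl( \| F^{\tau,h}_i \|_l^2 + \| G^{\tau,h}_i \|_l^2 \bigr),
\end{equation*}
with a constant $N$ of the required form, because $r^{\tau,h}_0 = 0$. Next, using the definition of $F^{\tau,h}$ and $G^{\tau,h,\rho}$ together with the operator bounds
\begin{equation*}
\| \mathcal{O}^{h(k-j)}_i \nu^{(j)}_i \|_l \leq N |h|^{k-j+1} \| \nu^{(j)}_i \|_{l + k - j + 3}, \quad \| \mathcal{R}^{h(k-j)\rho}_i \nu^{(j)}_i \|_l \leq N |h|^{k-j+1} \| \nu^{(j)}_i \|_{l + k - j + 2},
\end{equation*}
the factor $h^j$ in each term of $F^{\tau,h}$ and $G^{\tau,h,\rho}$ combines with $|h|^{k-j+1}$ to produce the uniform prefactor $|h|^{k+1}$. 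This gives
\begin{equation*}
\| F^{\tau,h}_i \|_l^2 + \| G^{\tau,h}_i \|_l^2 \leq N |h|^{2(k+1)} \sum_{j=0}^k \bigl( \| \nu^{(j)}_i \|_{m + 2 - j}^2 + \| \nu^{(j)}_i \|_{m + 1 - j}^2 \bigr),
\end{equation*}
where I have used the identity $l + k + 1 = m$ so that the indices $l + k - j + 3$ and $l + k - j + 2$ line up exactly with the Sobolev regularity available for $\nu^{(j)}$.

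Finally I would sum in $i$ and take expectations, bounding
\begin{equation*}
E \tau \sum_{i=0}^n \| \nu^{(0)}_i \|_{m+2}^2 \leq N \mathcal{K}_m
\end{equation*}
by Theorem \ref{thm: solvability implicit time scheme with estimate} and, for $j \in \{1, \dots, k\}$,
\begin{equation*}
E \tau \sum_{i=0}^n \| \nu^{(j)}_i \|_{m+2-j}^2 \leq N \mathcal{K}_m
\end{equation*}
by Theorem \ref{thm: solvability of the time system with estimate}, with the weaker $(m+1-j)$-norms controlled the same way (or simply via $T \cdot E \max_i \| \nu^{(j)}_i \|_{m+1-j}^2$). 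Assembling these pieces yields \eqref{eqn: estimate for auxiliary equation}. The main bookkeeping obstacle will be verifying that the index $l + k - j + 3$ appearing in the Taylor remainder bound exactly matches the maximum Sobolev regularity $m + 2 - j$ guaranteed for $\nu^{(j)}$ under the hypothesis $m = l + k + 1$; once that match is confirmed, everything else is an application of already-established estimates.
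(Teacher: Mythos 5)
Your proposal is correct and follows essentially the same route as the paper: reduce to the equation for $r^{\tau,h}$ from Lemma \ref{lem: solvability of auxiliary equation}, apply the $h$-uniform estimate of Theorem \ref{thm: Sobolev valued solution to space-time scheme and estimate} (with zero initial data) to dominate the left-hand side by $E\tau\sum_i(\|F^{\tau,h}_i\|_l^2+\|G^{\tau,h,\rho}_i\|_l^2)$, and then bound these via Remark \ref{rmk: bound on script-O and script-R} and Theorems \ref{thm: solvability implicit time scheme with estimate} and \ref{thm: solvability of the time system with estimate}, the index match $l+k-j+3=m+2-j$ being exactly the one the paper uses.
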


\begin{proof}
Recall that by Lemma \ref{lem: solvability of auxiliary equation} we have that $F^{\tau,h} \in \mathbf{W}^{l}_{2}(\tau)$ and $G^{\tau,h,\rho} \in \mathbf{W}^{l+1}_{2}(\tau)$. Then the left-hand-side of \eqref{eqn: estimate for auxiliary equation} is dominated by
\begin{equation}
\label{eqn: bound on F and G}
N E \tau \sum_{i=1}^{n} ( \| F^{\tau,h}_{i} \|_{l}^{2} + \| G^{\tau,h,\rho}_{i} \|_{l}^{2} )
\end{equation}
due to Lemma \ref{lem: solvability of auxiliary equation} and Theorem \ref{thm: Sobolev valued solution to space-time scheme and estimate}. To estimate \eqref{eqn: bound on F and G} we observe that for $j \leq k$, by Remark \ref{rmk: bound on script-O and script-R} we have that
\begin{equation*}
\| \mathcal{O}^{h(k-j)}_{i} \nu^{(j)}_{i} \|_{l} \leq N |h|^{k-j+1} \| \nu^{(j)}_{i} \|_{l+k-j+3} = N |h|^{k-j+1} \|\nu^{(j)}_{i}\|_{m+2-j},
\end{equation*}
and combining this result with Theorem \ref{thm: solvability of the time system with estimate} yields 
\begin{equation*}
E \tau \sum_{i=1}^{n}  \| F^{\tau,h}_{i} \|_{l}^{2} \leq N |h|^{2(k+1)} \mathcal{K}_{m}.
\end{equation*}
The bound on $G^{\tau,h,\rho}$ can be obtained in a similar fashion, yielding the desired result.
\end{proof}

Now set $R^{\tau,h} := I r^{\tau,h}$ where $I$ is the embedding operator from Lemma \ref{lem: embedding}. We have the following corollary to Theorem \ref{thm: generalized result for auxiliary equation} which implies Theorem \ref{thm: expansion and estimate for differences of expansion error}.

\begin{cor}
\label{cor: estimate for error}
If the assumptions of Theorem \ref{thm: generalized result for auxiliary equation} hold with $l > p + d/2$ for a nonnegative integer $p$ then for $\lambda \in \Lambda^{p}$
\begin{equation*}
E \max_{i \leq n} \sup_{x \in \mathbf{R}^{d}} |\delta_{h,\lambda} R^{\tau,h}_{i} (x)|^{2} \leq N h^{2(k+1)} \mathcal{K}_{m}
\end{equation*}
and 
\begin{equation*}
E \max_{i \leq n} \sum_{x \in G_{h}} |\delta_{h,\lambda} R^{\tau,h}_{i} (x)|^{2} |h|^{d} \leq N h^{2(k+1)} \mathcal{K}_{m}
\end{equation*}
hold for a constant $N$ depending only on $d$, $d_{1}$, $\Lambda$, $m$, $K_{0}$, \dots, $K_{m+1}$, $A_{0}$, \dots, $A_{m}$, $\kappa$, and $T$. 
\end{cor}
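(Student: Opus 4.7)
The plan is to reduce both estimates in the corollary to the $W^l_2$ bound for $r^{\tau,h}$ furnished by Theorem \ref{thm: generalized result for auxiliary equation}, then route through the two different embeddings available to us: Sobolev's embedding $W^{l-p}_2 \subset C_b$ for the pointwise supremum bound on $\mathbf{R}^d$, and Lemma \ref{lem: embedding} for the discrete $\ell^2(G_h)$ bound. The gap $p$ in regularity we can afford to lose is exactly what Remark \ref{rmk: differences are bounded by derivatives} lets us use to absorb the $p$ finite differences in $\delta_{h,\lambda}$.

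First I would invoke Theorem \ref{thm: generalized result for auxiliary equation} directly to obtain
\begin{equation*}
E \max_{i \leq n} \| r^{\tau,h}_{i} \|_{l}^{2} \leq N |h|^{2(k+1)} \mathcal{K}_{m}.
\end{equation*}
Next, for any $\lambda = (\lambda_1, \dots, \lambda_p) \in \Lambda^p$, I would iterate Remark \ref{rmk: differences are bounded by derivatives} (noting that finite differences commute with spatial derivatives since both are translation-invariant) to get the regularity-loss estimate
\begin{equation*}
\| \delta_{h,\lambda} \phi \|_{l-p}^{2} \leq N \| \phi \|_{l}^{2}
\end{equation*}
for any $\phi \in W_2^l$, with $N$ depending on $\Lambda$, $p$, and $l$. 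Applied to $\phi = r^{\tau,h}_i$ this yields an $E\max$-bound in the $W^{l-p}_2$ norm of order $|h|^{2(k+1)}\mathcal{K}_m$ for $\delta_{h,\lambda} r^{\tau,h}_i$.

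For the pointwise estimate, since $l-p > d/2$, the Sobolev embedding $W^{l-p}_2 \hookrightarrow C_b$ gives $\sup_x |I\psi(x)|^2 \leq N \|\psi\|_{l-p}^2$ for all $\psi \in W^{l-p}_2$. Because $I$ is a linear operator that recovers the continuous modification almost everywhere and $\delta_{h,\lambda}$ is a finite linear combination of point evaluations, $I\delta_{h,\lambda} r^{\tau,h}_i$ agrees with $\delta_{h,\lambda} R^{\tau,h}_i$ as a continuous function; applying the embedding to $\psi = \delta_{h,\lambda} r^{\tau,h}_i$ and then taking $E\max_i$ yields the first assertion. For the discrete $\ell^2(G_h)$ estimate, I would apply Lemma \ref{lem: embedding} with $l-p$ in place of $l$ (permissible since $l-p > d/2$) to the same $\psi$, obtaining $\sum_{x \in G_h} |\delta_{h,\lambda} R^{\tau,h}_i(x)|^2 |h|^d \leq N \|\delta_{h,\lambda} r^{\tau,h}_i\|_{l-p}^2$, and again take $E\max_i$ and chain with the previous two bounds.

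The main point of care—more bookkeeping than obstacle—is verifying that $I$ indeed intertwines $\delta_{h,\lambda}$ with the pointwise difference of the continuous representative $R^{\tau,h}$; once this is in place, the proof is a straightforward chain of three inequalities and a single application of $E\max$ to each side. No new stochastic analysis beyond Theorem \ref{thm: generalized result for auxiliary equation} is required, and the constants inherit the stated dependencies automatically from the embedding constants (depending only on $d$, $l$, $p$, and $\Lambda$) combined with those from Theorem \ref{thm: generalized result for auxiliary equation}.
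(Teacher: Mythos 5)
Your proposal is correct and follows essentially the same route as the paper: both estimates are reduced to the $W^{l}_{2}$ bound of Theorem \ref{thm: generalized result for auxiliary equation} by first absorbing the $p$ finite differences via Remark \ref{rmk: differences are bounded by derivatives} (the paper phrases this through the norm $\|\cdot\|_{l-p,p,h}$), then applying Sobolev's embedding $W^{l-p}_{2}\subset C_{b}$ for the supremum bound and Lemma \ref{lem: embedding} with $l-p$ in place of $l$ for the grid bound. The intertwining of $I$ with $\delta_{h,\lambda}$ that you flag is indeed the only point of care, and your justification of it is the same one the paper uses implicitly.
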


\begin{proof}
Using Sobolev's embedding of $W^{l-p}_{2}$ into $C_{b}$ and Remark \ref{rmk: differences are bounded by derivatives}, Theorem \ref{thm: generalized result for auxiliary equation} implies
\begin{align*}
E \max_{i \leq n} \sup_{x \in \mathbf{R}^{d}} |\delta_{h,\lambda} R^{\tau,h}_{i} (x) |^{2} &\leq C E \max_{i \leq n} \| r^{\tau,h}_{i} \|_{l-p,p,h}^{2}\\
	&\leq C^{\prime} E \max_{i \leq n} \| r^{\tau,h}_{i} \|_{l}^{2}\\
	&\leq N h^{2(k+1)} \mathcal{K}_{m}
\end{align*}
where $C$ and $C^{\prime}$ are constants depending only on $m$ and $d$, and $N$ is a constant depending only on $m$, $d$, $d_{1}$, $\kappa$, $\Lambda$, $K_{0}$, \dots, $K_{m+1}$, and $T$. Similarly, by Lemma \ref{lem: embedding} above and Remark \ref{rmk: differences are bounded by derivatives}, 
\begin{align*}
E \max_{i \leq n} \sum_{x \in G_{h}} |\delta_{h,\lambda} R^{\tau,h}_{i} (x) |^{2} |h|^{d} & \leq C E \max_{i \leq n} \| \delta_{h,\lambda} R^{\tau,h}_{i} \|_{l-p}^{2}\\
	&\leq C^{\prime} E \max_{i \leq n} \| r^{\tau,h}_{i} \|_{l}^{2}\\
	&\leq N h^{2(k+1)} \mathcal{K}_{m}.
\end{align*}
\end{proof}

For the $I: W^{l}_{2} \to C_{b}$ from Lemma \ref{lem: embedding}, Theorem \ref{thm: expansion and estimate for differences of expansion error} follows by considering the embeddings $\hat{v}^{h} := I \nu^{h}$, where $\nu^{h}$ is the unique $L^{2}$-valued solution to \eqref{eqn: implicit space-time scheme} with initial condition $u_{0}$, and $v^{(j)} := I \nu^{(j)}$ for $j \in \{ 0, \dots, k\}$, where $\nu^{(0)}$ is the unique $L^{2}$-valued solution to \eqref{eqn: implicit time scheme} with initial condition $u_{0}$ and the processes $\nu^{(1)}$, \dots, $\nu^{(k)}$ are the solutions to the system of time discretized SPDE \eqref{eqn: system of time discretized spde} as given in Theorem \ref{thm: solvability of the time system with estimate}. By Theorem \ref{thm: Sobolev valued solution to space-time scheme and estimate}, $\nu^{h}$ is $\mathcal{F}_{i}$-adapted and $W^{l}_{2}$-valued for all $i \in \{1, \dots, n\}$. For each $j \in \{ 1, \dots, k\}$ the $\nu^{(j)}$ are $W^{p+1+k}$-valued processes by Theorem \ref{thm: solvability of the time system with estimate}. Since $l > d/2$ and $p + 1 - k > d/2$ the processes $\hat{v}^{h}$ and $v^{(j)}$ are well defined and clearly \eqref{eqn: auxiliary equation} implies \eqref{eqn: expansion} with $\hat{v}^{h}$ in place of $v^{h}$. That is, we have the expansion for a continuous version of the $L^{2}$-valued solution.

To see that Theorem \ref{thm: expansion and estimate for differences of expansion error} indeed follows from Corollary \ref{cor: estimate for error} we must show that the restriction of the $L^{2}$-valued solution to the grid $G_{h}$, a set of Lebesgue measure zero,  is indeed equal almost surely to the unique $\ell^{2}(G_{h})$-valued solution that one would naturally obtain from \eqref{eqn: implicit space-time scheme}. That is, we must show that
\begin{equation}
\label{eqn: equality of the restriction} 
\hat{v}^{h}_{i} (x) = v^{h}_{i}(x)
\end{equation} 
almost surely for all $i \in \{1, \dots, n\}$ and for each $x \in G_{h}$ where $v^{h}$ is the unique $\mathcal{F}_{i}$-adapted $\ell_{2}(G_{h})$-valued solution of \eqref{eqn: implicit space-time scheme} from  Theorem \ref{thm: l2 valued solution to the space-time scheme}. Therefore, for a compactly supported nonnegative smooth function $\phi$ on $\mathbf{R}^{d}$ with unit integral and for a fixed $x \in G_{h}$ we define $$\phi_{\varepsilon}(y) := \phi\left( \frac{y-x}{\varepsilon} \right)$$ for $y \in \mathbf{R}^{d}$ and $\varepsilon > 0$. Recall, by Remark \ref{rmk: on Sobolev's embedding}, that we can obtain versions of $u_{0}$, $f$, and $g^{\rho}$ that are continuous in $x$. Since $\hat{v}^{h}$ is a $L^{2}$-valued solution of \eqref{eqn: implicit space-time scheme} for each $\varepsilon$, almost surely
\begin{equation*}
\begin{split}
\int_{\mathbf{R}^{d}} \hat{v}^{h}_{i}(y) \phi_{\varepsilon}(y) \, dy = \int_{\mathbf{R}^{d}} \hat{v}_{i-1}(y) \phi_{\varepsilon}(y) \, dy + \tau \int_{\mathbf{R}^{d}} (L^{h}_{i}\hat{v}^{h}_{i} + f_{i})(y) \phi_{\varepsilon}(y) \, dy\\
	 + \sum_{\rho=1}^{d_{1}}\xi^{\rho}_{i} \int_{\mathbf{R}^{d}} ( M^{h,\rho}_{i-1} \hat{v}^{h}_{i-1} + g^{\rho}_{i-1})(y) \phi_{\varepsilon}(y) \, dy
\end{split}
\end{equation*}
for each $i \in \{1, \dots, n\}$. Letting $\varepsilon \to 0$, we see that both sides converge for all $i \in \{1, \dots, n\}$ and $\omega \in \Omega$. Therefore almost surely 
\begin{equation*}
\hat{v}^{h}_{i}(x) = \hat{v}^{h}_{i-1}(x) + (L^{h}_{i}\hat{v}^{h}_{i}(x) + f_{i}(x) ) \tau + \sum_{\rho=1}^{d_{1}}( M^{h,\rho}_{i-1} \hat{v}^{h}_{i-1}(x) + g^{\rho}_{i-1}(x) )\xi^{\rho}_{i}
\end{equation*}
for all $i \in \{1, \dots, n\}$. Moreover by Lemma \ref{lem: embedding}, the restriction of $\hat{v}^{h}$, the continuous version of $\nu^{h}$, onto $G_{h}$ is an $\ell^{2}(G_{h})$-valued process. Hence \eqref{eqn: equality of the restriction} holds, due to the uniqueness of the $\ell^{2}(G_{h})$-valued $\mathcal{F}_{i}$-adapted solution of \eqref{eqn: implicit space-time scheme} for any $\ell^{2}(G_{h})$-valued $\mathcal{F}_{0}$-measurable initial data. This finishes the proof of Theorem \ref{thm: expansion and estimate for differences of expansion error}.

We end with the following generalization of Theorem \ref{thm: acceleration}.

\begin{thm}
If the assumptions of Theorem \ref{thm: expansion and estimate for differences of expansion error} hold with $p=0$ and $\bar{v}^{h}$ as defined in \eqref{eqn: v-bar} then 
\begin{align*}
&E \max_{i \leq n} \sup_{x \in G_{h}} | \bar{v}^{h}_{i} (x) - v^{(0)}_{i}(x) |^2 \\
&+ E \max_{i \leq n} \sum_{x\in G_{h}} | \bar{v}^{h}_{i} (x) - v^{(0)}_{i}(x) |^2 |h|^{d} \leq N |h|^{2(k+1)} \mathcal{K}_{m}
\end{align*}
for a constant $N$ depending only on $d$, $d_{1}$, $\Lambda$, $m$, $K_{0}$, \dots, $K_{m+1}$, $A_{1}$, \dots, $A_{m}$, $\kappa$, and $T$.
\end{thm}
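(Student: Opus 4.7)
The plan is to combine the computation from the proof of Theorem~\ref{thm: acceleration} (to handle the sup-over-$G_{h}$ term) with the discrete $\ell^{2}$-estimate from Corollary~\ref{cor: estimate for error} (to handle the sum-over-$G_{h}$ term). First, apply Theorem~\ref{thm: expansion and estimate for the expansion error} to each $v^{2^{-j}h}$ for $j \in \{0,\dots,k\}$ and take the same linear combination with weights $(\beta_{0},\dots,\beta_{k})$ as in~\eqref{eqn: v-bar}. The $v^{(i)}$-terms cancel by the defining properties $\sum_{j}\beta_{j}=1$ and $\sum_{j}\beta_{j}2^{-ij}=0$ for $1\le i\le k$, exactly as in the calculation preceding~\eqref{eqn: acceleration}, and we obtain
\begin{equation*}
\bar{v}^{h}-v^{(0)}=\sum_{j=0}^{k}\beta_{j}\,R^{\tau,\,2^{-j}h}.
\end{equation*}

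For the $\sup_{x\in G_{h}}$ part of the claimed estimate, Corollary~\ref{cor: estimate for error} with $p=0$ applied to each $R^{\tau,\,2^{-j}h}$ gives a bound on $E\max_{i}\sup_{x\in\mathbf{R}^{d}}|R^{\tau,\,2^{-j}h}_{i}(x)|^{2}$, which a fortiori bounds the sup over $G_{h}$. Using $(2^{-j}h)^{2(k+1)}\le |h|^{2(k+1)}$ and summing in $j$ with coefficients $|\beta_{j}|$ yields the sup-estimate by the triangle inequality.

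The $\ell^{2}(G_{h})$ part is where the one nontrivial point appears. Theorem~\ref{thm: expansion and estimate for differences of expansion error} does provide a discrete $\ell^{2}$-bound for $R^{\tau,\,2^{-j}h}$, but over $G_{2^{-j}h}$, not over $G_{h}$, so we cannot quote it directly. Instead I would mimic the proof of Corollary~\ref{cor: estimate for error} with a different grid: apply Lemma~\ref{lem: embedding} with the grid $G_{h}$ (the lemma places no compatibility requirement between the grid mesh and the function being embedded) to the $W^{l}_{2}$-representative $r^{\tau,\,2^{-j}h}$ of $R^{\tau,\,2^{-j}h}$, obtaining
\begin{equation*}
|h|^{d}\sum_{x\in G_{h}}|R^{\tau,\,2^{-j}h}_{i}(x)|^{2}\le N\bigl\|r^{\tau,\,2^{-j}h}_{i}\bigr\|_{l}^{2}.
\end{equation*}
The right-hand side is then estimated by Theorem~\ref{thm: generalized result for auxiliary equation} (applied with mesh $2^{-j}h$, with $l>d/2$ as available from $m=l+k+1$) to give $N(2^{-j}h)^{2(k+1)}\mathcal{K}_{m}\le N|h|^{2(k+1)}\mathcal{K}_{m}$ after taking $E\max_{i}$. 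Summing in $j$ with weights $|\beta_{j}|$ finishes the $\ell^{2}(G_{h})$-estimate.

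The main (and really only) obstacle is the decoupling just described: the grid $G_{h}$ appearing in the statement is \emph{not} the natural grid $G_{2^{-j}h}$ of the $j$-th scheme. The resolution is to work with the $L^{2}$-valued solutions supplied by Theorem~\ref{thm: Sobolev valued solution to space-time scheme and estimate} and exploit the flexibility of Lemma~\ref{lem: embedding}, which allows any $|h|\in(0,1)$ on its left-hand side regardless of how the function on its right-hand side was produced. Once this is in place, the rest of the proof is a routine triangle-inequality combination of two already-proved estimates, with the constant $N$ depending on the data through the same parameters as in Theorem~\ref{thm: acceleration}.
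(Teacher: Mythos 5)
Your proposal is correct and follows essentially the same route as the paper, whose one-line proof ("follows from Theorem \ref{thm: generalized result for auxiliary equation} and the definition of $\bar{v}^{h}$") implicitly packages exactly your argument: Richardson cancellation of the $v^{(i)}$-terms, the Sobolev-norm bound on each $r^{\tau,2^{-j}h}$ from Theorem \ref{thm: generalized result for auxiliary equation}, and the mesh-independent embeddings (Sobolev and Lemma \ref{lem: embedding}) as in Corollary \ref{cor: estimate for error}. Your explicit observation that Lemma \ref{lem: embedding} may be applied with the grid $G_{h}$ to the $W^{l}_{2}$-representative $r^{\tau,2^{-j}h}$, regardless of the finer mesh $2^{-j}h$ used to produce it, is precisely the detail the paper leaves unstated.
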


This follows from Theorem \ref{thm: generalized result for auxiliary equation} and the definition of $\bar{v}^{h}$.

\section{Acknowledgements}\label{sec: Acknowledgements}
The author would like to express gratitude towards his supervisor, Professor Istv{\'a}n Gy{\"o}ngy, for the encouragement and helpful suggestions offered during the preparation of these results which will form part of the author's Ph.D.\ thesis.

\bibliographystyle{amsplain}

\begin{thebibliography}{10}

\bibitem{BainCrisan:2009}
Alan Bain and Dan Crisan, \emph{Fundamentals of stochastic filtering},
  Stochastic Modelling and Applied Probability, vol.~60, Springer, New York,
  2009.

\bibitem{Brezinski:2000}
C.~Brezinski, \emph{Convergence acceleration during the 20th century}, J.
  Comput. Appl. Math. \textbf{122} (2000), no.~1-2, 1--21, Numerical analysis
  2000, Vol. II: Interpolation and extrapolation.

\bibitem{DavieGaines:2000}
A.~M. Davie and J.~G. Gaines, \emph{Convergence of numerical schemes for the
  solution of parabolic stochastic partial differential equations}, Math. Comp.
  \textbf{70} (2000), no.~233, 121--134 (electronic).

\bibitem{GyongyKrylov:2010}
Istv{{\'a}}n Gy{{\"o}}ngy and Nicolai Krylov, \emph{Accelerated finite
  difference schemes for linear stochastic partial differential equations in
  the whole space}, SIAM J. Math. Anal. \textbf{42} (2010), no.~5, 2275--2296.

\bibitem{GyongyKrylov:2011}
\bysame, \emph{Accelerated finite difference schemes for second order
  degenerate elliptic and parabolic problems in the whole space}, Math. Comp.
  \textbf{80} (2011), no.~275, 1431--1458.

\bibitem{GyongyMillet:2005}
Istv{{\'a}}n Gy{{\"o}}ngy and Annie Millet, \emph{On discretization schemes for
  stochastic evolution equations}, Potential Anal. \textbf{23} (2005), no.~2,
  99--134.

\bibitem{GyongyMillet:2007}
\bysame, \emph{Rate of convergence of implicit approximations for stochastic
  evolution equations}, Stochastic differential equations: theory and
  applications, Interdiscip. Math. Sci., vol.~2, World Sci. Publ., Hackensack,
  NJ, 2007, pp.~281--310.

\bibitem{GyongyMillet:2009rc}
\bysame, \emph{Rate of convergence of space time approximations for stochastic
  evolution equations}, Potential Anal. \textbf{30} (2009), no.~1, 29--64.

\bibitem{Joyce:1971}
D.~C. Joyce, \emph{Survey of extrapolation processes in numerical analysis},
  SIAM Rev. \textbf{13} (1971), 435--490.

\bibitem{KloedenPlatenHofmann:1995}
P.~E. Kloeden, E.~Platen, and N.~Hofmann, \emph{Extrapolation methods for the
  weak approximation of {I}t\^o diffusions}, SIAM J. Numer. Anal. \textbf{32}
  (1995), no.~5, 1519--1534.

\bibitem{Krylov:1999}
N.~V. Krylov, \emph{An analytic approach to {SPDE}s}, Stochastic partial
  differential equations: six perspectives, Math. Surveys Monogr., vol.~64,
  Amer. Math. Soc., Providence, RI, 1999, pp.~185--242.

\bibitem{KrylovRozovskii:1977}
N.~V. Krylov and B.~L. Rozovski{\u \i}, \emph{The {C}auchy problem for linear
  stochastic partial differential equations}, Math. USSR, Izv. \textbf{11}
  (1977), no.~6, 1267--1284.

\bibitem{Kunita:1981}
Hiroshi Kunita, \emph{Cauchy problem for stochastic partial differential
  equations arising in nonlinear filtering theory}, Systems Control Lett.
  \textbf{1} (1981/82), no.~1, 37--41.

\bibitem{MalliavinThalmaier:2003}
Paul Malliavin and Anton Thalmaier, \emph{Numerical error for {SDE}: asymptotic
  expansion and hyperdistributions}, C. R. Math. Acad. Sci. Paris \textbf{336}
  (2003), no.~10, 851--856.

\bibitem{Pardoux:1975}
E.~Pardoux, \emph{{\'{E}}quations aux d{{\'e}}riv{{\'e}}es partielles
  stochastiques de type monotone}, S{{\'e}}minaire sur les {\'{E}}quations aux
  {D}{{\'e}}riv{{\'e}}es {P}artielles (1974--1975), {III}, {E}xp. {N}o. 2,
  Coll{\`e}ge de France, Paris, 1975, p.~10.

\bibitem{Pardoux:1979}
\bysame, \emph{Stochastic partial differential equations and filtering of
  diffusion processes}, Stochastics \textbf{3} (1979), no.~2, 127--167.

\bibitem{Richardson:1911}
L.F. Richardson, \emph{The approximate arithmetical solution by finite
  differences of physical problems involving differential equations, with an
  application to the stresses in a masonry dam}, Philosophical Transactions of
  the Royal Society of London. Series A, Containing Papers of a Mathematical or
  Physical Character \textbf{210} (1911), 307--357.

\bibitem{RichardsonGaunt:1927}
L.F. Richardson and J.A. Gaunt, \emph{The deferred approach to the limit. part
  i. single lattice. part ii. interpenetrating lattices}, Philosophical
  Transactions of the Royal Society of London. Series A, containing papers of a
  mathematical or physical character \textbf{226} (1927), 299--361.

\bibitem{TalayTubaro:1990}
Denis Talay and Luciano Tubaro, \emph{Expansion of the global error for
  numerical schemes solving stochastic differential equations}, Stochastic
  Anal. Appl. \textbf{8} (1990), no.~4, 483--509 (1991).
  
\bibitem{Yoo:1998th}
Hyek Yoo, \emph{An analytic approach to stochastic partial differential
  equations and its applications}, ProQuest LLC, Ann Arbor, MI, 1998, Thesis
  (Ph.D.)--University of Minnesota.

\bibitem{Yoo:2000}
\bysame, \emph{Semi-discretization of stochastic partial differential equations
  on {${\bf R}^1$} by a finite-difference method}, Math. Comp. \textbf{69}
  (2000), no.~230, 653--666.

\bibitem{Zakai:1969}
Moshe Zakai, \emph{On the optimal filtering of diffusion processes}, Z.
  Wahrscheinlichkeitstheorie und Verw. Gebiete \textbf{11} (1969), 230--243.

\end{thebibliography}
\providecommand{\bysame}{\leavevmode\hbox to3em{\hrulefill}\thinspace}
\providecommand{\MR}{\relax\ifhmode\unskip\space\fi MR }
\providecommand{\MRhref}[2]{%
  \href{http://www.ams.org/mathscinet-getitem?mr=#1}{#2}
}
\providecommand{\href}[2]{#2}
   

\end{document}